\newcommand{\DD}{\mathbb{D}}
\newcommand{\NN}{\mathbb{N}}
\newcommand{\QQ}{\mathbb{Q}}
\newcommand{\RR}{\mathbb{R}}
\newcommand{\ZZ}{\mathbb{Z}}
\renewcommand{\SS}{\mathbb{S}}
\newcommand{\Bb}{\mathcal{B}}
\newcommand{\St}[1]{\|#1\|_X}
\newcommand{\StS}[1]{\|#1\|_\Sigma}
\newcommand{\Sts}[1]{\|#1\|_M}
\newtheorem{thm}{Theorem}[section]
\newtheorem{cor}{Corollary}[thm]
\newtheorem{lemma}[thm]{Lemma}
\newtheorem{prop}[thm]{Proposition}
\theoremstyle{definition}
\newtheorem{definition}[thm]{Definition}
\begin{document}
	
	\title{\bf On the stable norm of slit tori and the Farey sequence}
	\author{\sc Pablo Montealegre}
	\date{}
	
	\maketitle
	
	\begin{abstract}
		Let $(M,g)$ be a compact manifold endowed with a possibly singular Riemannian metric. The metric induces a norm on the homology of $M$, called the stable norm. We provide explicit computations of the stable norm of flat slit tori using the Farey sequence. We then glue several slit tori together to produce half-translation surfaces whose unit ball of the stable norm has faces of maximal dimension. Furthermore, we give a sub-quadratic estimate for the asymptotic counting of simple homology classes on these surfaces.
	\end{abstract}

\section*{Introduction}

Let $(M,g)$ be a connected compact manifold endowed with a smooth Riemannian metric $g$. The metric induces a norm on the first homology group $H_1(M,\RR)$, named the stable norm by Gromov in \cite{Gromov}. More precisely, following Massart in \cite{Mas}, let $h\in H_1(M,\ZZ)$ be an integral homology class. We define
	$$f(h) = \inf\limits_{\gamma} l(\gamma)$$
	where the infimum is taken over all the closed rectifiable curves $\gamma$ on $M$ representing the class $h$, and where $l(\gamma)$ denotes the length of $\gamma$. Proposition 1.1.3 of \cite{Mas} yields that the limit
	$$ \lim\limits_{N\to \infty} \frac{f(Nh)}{N} $$ 
	exists for any integral class $h$; we denote this limit $\Sts{h}$. One can check that for any two integral homology classes $h_1$ and $h_2$, the triangle inequality $\Sts{h_1+h_2} \le \Sts{h_1}+\Sts{h_2}$ holds. Moreover, for any $N\in\ZZ$ we have $\Sts{Nh}=|N|\Sts{h}$ so by homogeneity we can extend the function $h\mapsto \Sts{h}$ to the rational homology classes $H_1(M,\QQ)$. Finally, by a density argument, we can extend this function to the whole real homology $H_1(M,\RR)$, and proposition 1.1.5 of \cite{Mas} tells us that the extension is a norm. \newline \newline

To this day, little is known about the stable norm of manifolds. In particular, there are very few known explicit examples: see Babenko \cite{babenko} on $2$-tori, Burago-Ivanov-Kleiner \cite{Bur-Iva} for $n$-tori or McShane-Rivin \cite{McShane Rivin} on the punctured hyperbolic torus. 
 
	  Since the stable norm is defined only using length of curves, it still makes sense to define a stable norm on surfaces endowed with singular Riemannian metrics, such as translation and half-translation surfaces. These surfaces enjoy very nice geometrical properties: in particular, they are endowed with a flat metric with a finite number of conical singularities (see for instance Zorich's survey \cite{Zorich}). This greatly simplifies the task of finding geodesics and computing their length, so one can hope to be able to compute explicitly the stable norm on some of these surfaces. Unfortunately, it is still a difficult problem, so the idea of this paper is to compute the stable norm on flat slit tori that we can later on glue together to obtain translation and half-translation surfaces. This is not new: since slit tori are the building blocks of surfaces of main interest in modern geometry and dynamical systems, they are often the testing ground when trying to understand a new object, see for example \cite{Schmoll} or \cite{Sanchez}. \newline

Let us consider the flat square torus with a vertical slit, denoted $X$, that we will call for short the slit torus. More precisely, take a square of area $1$, and cut it along a vertical open interval $I$ of length $\rho$, with $0<\rho<1$. Now glue the opposite edges of the square by translation and compactify it by adding a boundary along each side of the slit, so that the cut is homeomorphic to a circle. The resulting surface is our slit torus $X$, which is homeomorphic to a compact surface of genus $1$ with one boundary component. The slit torus is endowed with a Euclidean metric, that on the interior of the slit torus is simply the restriction of the canonical flat metric on the usual square torus.
	
	\begin{center}
		\includegraphics[scale=0.8]{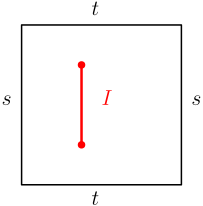}
		\captionof{figure}{Polygonal model of the slit torus}
        \label{modele}
	\end{center}

	Since the slit torus is homotopy equivalent to the wedge sum of two circles, we have
	
	$$\pi_1(X) \cong F_2$$ where $F_2=F(s,t)=\langle s,t \rangle$ is the free group on the two generators $s$ and $t$. We also have
	$$H_1(X,\ZZ) \cong \ZZ^2$$ From now on we will identify the elements $h$ of $H_1(X,\ZZ)$ with the integer couples $(m,n)$ in $\ZZ^2$. \newline

 Our goal is to compute explicitly the stable norm of any given homology class on $X$, or equivalently describe the geometry of the unit ball of the stable norm, which is a convex set in $\RR^2$. It turns out that the computation involves the Farey sequence, a famous number sequence.

\begin{definition}
	The Farey sequence of order $k\in\NN^*$ is the ordered sequence $F_k$ of irreducible fractions in $\QQ$ whose denominator is less than or equal to $k$.
\end{definition}  

For example $F_1= \{... -1/1, 0/1,1/1,2/1,...  \}$, $F_2 =  \{ ..., -1/1, -1/2, 0/1, 1/2,1/1,3/2,2/1,...  \}$. The Farey sequence is omnipresent in mathematics, mainly due to its remarkable combinatorial properties. We sum up some of those in the following propositions, that can be found in \cite{Har}. 

\begin{prop}
	Let $a/b< c/d$ be two irreducible fractions that are consecutive in some Farey sequence $F_k$ of order $k$. These fractions are called a Farey pair and are said to be Farey neighbours, and they satisfy the following properties:
	\begin{enumerate}
		\item bc-ad=1. Note that this property is actually equivalent to being Farey neighbours.
		\item If $p/q$ is an irreducible fraction that is neighbour to both $a/b$ and $c/d$ in some Farey sequence (of order greater than $k$), then
		$$ \frac{p}{q} = \frac{a+c}{b+d}$$
		We denote $p/q = a/b \oplus c/d$ and we say that $p/q$ is the mediant of $a/b$ and $c/d$.
	\end{enumerate}
\end{prop}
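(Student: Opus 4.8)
The plan is to prove both parts at once by induction on the order $k$, since the two statements reinforce each other. The only computation needed is the identity
$$(bp-aq)\,d+(qc-pd)\,b=q\,(bc-ad),$$
valid for any fractions $a/b,p/q,c/d$ with positive denominators, together with the remark that when $a/b<p/q<c/d$ the integers $bp-aq$ and $qc-pd$ are strictly positive, hence each is $\ge 1$, so $q(bc-ad)\ge b+d$. I also note that for $a/b<c/d$ the mediant $\frac{a+c}{b+d}$ lies strictly between them, since $\frac{a+c}{b+d}-\frac{a}{b}=\frac{bc-ad}{b(b+d)}>0$ and $\frac{c}{d}-\frac{a+c}{b+d}=\frac{bc-ad}{d(b+d)}>0$, and that $\gcd(a+c,b+d)$ divides $b(a+c)-a(b+d)=bc-ad$, so as soon as we know $bc-ad=1$ the mediant is automatically in lowest terms with denominator exactly $b+d$.

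For the base case $F_1$, consecutive fractions are $n/1<(n+1)/1$ and $1\cdot(n+1)-n\cdot 1=1$. For the inductive step, assume every adjacent pair $a/b<c/d$ of $F_k$ satisfies $bc-ad=1$ and fix one such pair. Since the mediant $\frac{a+c}{b+d}$ is then in lowest terms and lies strictly between $a/b$ and $c/d$, adjacency in $F_k$ forces $b+d\ge k+1$. A fraction $p/q$ with $a/b<p/q<c/d$ and $q\le k+1$ satisfies $q=q(bc-ad)\ge b+d\ge k+1$ by the identity, hence $q=b+d=k+1$; the identity then gives $(bp-aq)d+(qc-pd)b=b+d$, and since $bp-aq\ge 1$ and $qc-pd\ge 1$ this forces $bp-aq=qc-pd=1$, whence $b(p-c)=a(q-d)=ab$ (using $bc-ad=1$ and $q-d=b$) and so $p=a+c$. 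Therefore passing from $F_k$ to $F_{k+1}$ inserts at most one fraction between $a/b$ and $c/d$, namely the mediant, and it appears precisely when $b+d=k+1$. In either case every adjacent pair of $F_{k+1}$ again satisfies the cross-product-one relation, so the induction closes and (1) is proved; the argument also shows that the first fraction to appear strictly between a Farey pair is its mediant.

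The converse asserted in (1) is now immediate: if $bc-ad=1$ and $a/b<c/d$, then both belong to $F_{\max(b,d)}$, while any fraction strictly between them has denominator $\ge b+d>\max(b,d)$ by the identity, so they are consecutive there. For (2), let $p/q$ be a neighbour of both $a/b$ and $c/d$ in some $F_m$ with $m>k$. If $p/q<a/b$, then $a/b$, which lies in $F_m$, would sit strictly between $p/q$ and $c/d$, contradicting that $p/q$ and $c/d$ are neighbours in $F_m$; the case $p/q>c/d$ is excluded symmetrically. Hence $a/b<p/q<c/d$, so $p/q$ is the immediate successor of $a/b$ and the immediate predecessor of $c/d$ in $F_m$; part (1) applied to these two pairs gives $bp-aq=1$ and $qc-pd=1$, so the identity yields $q=q(bc-ad)=b+d$ and then $p=a+c$ exactly as above, i.e. $p/q=\frac{a+c}{b+d}$.

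The step I expect to be the main obstacle is the inductive one, where one has to simultaneously extract two facts — that an adjacent pair of $F_k$ satisfies $b+d\ge k+1$, and that at most one new fraction (the mediant) gets inserted between it — from the single inequality $q\ge b+d$; the delicate point is to arrange these so that the inductive hypothesis $bc-ad=1$ is available exactly when each is needed. Everything else reduces to the one routine identity.
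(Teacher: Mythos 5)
Your proof is correct. Note that the paper does not actually prove this proposition: it is stated as background material and attributed to Hardy--Wright \cite{Har}, so there is no in-paper argument to compare against. What you have written is essentially the classical induction on the order $k$ found in that reference, organized around the identity $(bp-aq)d+(qc-pd)b=q(bc-ad)$; all the steps check out, including the two delicate points you flag (that adjacency in $F_k$ forces $b+d\ge k+1$ via irreducibility of the mediant, and that $q(bc-ad)=b+d$ together with $bp-aq\ge 1$, $qc-pd\ge 1$ pins down $bp-aq=qc-pd=1$). Your closing remarks also correctly recover the unproved assertions the paper makes right after the proposition (that the mediant is the first fraction to appear between a Farey pair, and the converse direction of $bc-ad=1$), so the proposal is, if anything, more complete than what the paper provides.
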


Thus if $a/b$ and $c/d$ are Farey neighbours, the first fraction $p/q$ that appears between them as the order of the sequence increases is their mediant: we say that $p/q$ is the \emph{Farey child} of $a/b$ and $c/d$, and that $a/b$ and $c/d$ are the \emph{Farey parents} of $p/q$. Now, the mediant $a/b \oplus p/q$ is also a Farey child of $a/b$: the children of $a/b$ are all the rationals one of whose Farey parents is $a/b$. Interestingly, the integers are the Farey ancestors of all the rationals numbers: indeed, starting from the integers $n/1$ and taking the successive mediants one can obtain any rational number.

\begin{prop}
	Let $p/q = a/b \oplus c/d$ be an irreducible fraction, with $a/b<c/d$ its Farey parents. The Farey parents are the best rational approximations of $p/q$ with denominator less than $q$. More precisely, $a/b < p/q < c/d $ and there are no other rational number than $p/q$ with denominator less than $q$ in the interval $[a/b,c/d]$. In other words, the last two convergents in the continued fraction expansion of $p/q$ are its Farey parents.
\end{prop}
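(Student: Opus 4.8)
The plan is to deduce all three assertions from the single relation $bc-ad=1$ that characterises a Farey pair. First, the ordering: since $p/q=(a+c)/(b+d)$ and any common divisor of $a+c$ and $b+d$ divides $b(a+c)-a(b+d)=bc-ad=1$, the fraction $p/q$ is already in lowest terms with denominator exactly $q=b+d$. A one-line computation then gives
\[
\frac{p}{q}-\frac{a}{b}=\frac{(a+c)b-a(b+d)}{bq}=\frac{bc-ad}{bq}=\frac{1}{bq}>0,
\qquad
\frac{c}{d}-\frac{p}{q}=\frac{c(b+d)-d(a+c)}{dq}=\frac{1}{dq}>0,
\]
so $a/b<p/q<c/d$.

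Next I would show that no rational with denominator $<q$ lies strictly between the parents. Let $r/s\in\QQ$ with $s>0$ and $a/b<r/s<c/d$. Then $rb-sa$ and $cs-dr$ are positive integers, hence at least $1$, and decomposing $c/d-a/b$ along $r/s$ yields
\[
\frac{1}{bd}=\frac{c}{d}-\frac{a}{b}=\frac{cs-dr}{ds}+\frac{rb-sa}{bs}\ \ge\ \frac{1}{ds}+\frac{1}{bs}=\frac{b+d}{bds}=\frac{q}{bds},
\]
whence $s\ge q$. This is the best-approximation claim: any $r/s$ with $s<q$ lies in $(-\infty,a/b]\cup[c/d,+\infty)$, so among fractions of denominator $<q$ the closest to $p/q$ from below (resp.\ above) is $a/b$ (resp.\ $c/d$). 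If in addition $s=q$, the two inequalities above are equalities, i.e.\ $rb-sa=1=cs-dr$, and together with $bc-ad=1$ and $q=b+d$ this forces $r=a+c$, $s=b+d$, so $r/s=p/q$; hence $p/q$ is the unique rational of denominator $\le q$ in the open interval, and in particular the Farey parents of a given irreducible fraction are unique.

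Finally, for the continued-fraction statement I would write the canonical expansion $p/q=[a_0;a_1,\dots,a_n]$ with $a_n\ge 2$, with convergents $p_k/q_k$, so that $p_n/q_n=p/q$, $q_{n-1}<q$, and $p_nq_{n-1}-p_{n-1}q_n=(-1)^{n-1}$. Then the penultimate convergent $p_{n-1}/q_{n-1}$ and the fraction
\[
\frac{p-p_{n-1}}{q-q_{n-1}}=\frac{(a_n-1)p_{n-1}+p_{n-2}}{(a_n-1)q_{n-1}+q_{n-2}},
\]
which is the penultimate convergent of the other expansion $[a_0;\dots,a_n-1,1]$ of $p/q$, have denominators summing to $q$, mediant equal to $p/q$, and cross-determinant $p_{n-1}(q-q_{n-1})-q_{n-1}(p-p_{n-1})=-(p_nq_{n-1}-p_{n-1}q_n)=(-1)^n=\pm 1$; after ordering them they form a Farey pair with mediant $p/q$, so by the uniqueness just established they coincide with $a/b$ and $c/d$. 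In particular $p_{n-1}/q_{n-1}$ is one of the two Farey parents, which gives the last sentence of the statement. The first two steps are routine manipulations of $bc-ad=1$; the delicate point, and the main obstacle, is this last step: keeping the signs $(-1)^{n-1}$ versus $(-1)^n$ straight and checking that $(p-p_{n-1})/(q-q_{n-1})$ really is a convergent of the second continued-fraction representation, with the special case $a_n=1$ (an expansion ending in $1$) handled by passing to the canonical form.
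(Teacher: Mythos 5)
The paper does not actually prove this proposition: it is stated as background and attributed to Hardy and Wright (\cite{Har}), so there is no in-text argument to compare yours against. Judged on its own, your proof is correct and complete, and it is essentially the classical argument. The computation showing $p/q$ is irreducible and sandwiched as $a/b < p/q < c/d$ with gaps $1/(bq)$ and $1/(dq)$ is right; the decomposition of $c/d - a/b = 1/(bd)$ across an intermediate $r/s$ to force $s \ge q$, with equality only at the mediant, is the standard mediant/best-approximation argument and is carried out correctly. Your handling of the continued-fraction statement is actually more careful than the paper's phrasing: as written, ``the last two convergents'' of the canonical expansion are \emph{not} both Farey parents in general (e.g.\ for $5/7=[0;1,2,2]$ the parents are $2/3$ and $3/4$, and $3/4$ is only a convergent of the alternative expansion $[0;1,2,1,1]$); your identification of the second parent as $(p-p_{n-1})/(q-q_{n-1})$, the penultimate convergent of the other expansion, is the correct reading, and the determinant and mediant checks are right.

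One spot is terse enough to deserve a line: you invoke ``the uniqueness just established'' to conclude that the pair $\bigl(p_{n-1}/q_{n-1},\,(p-p_{n-1})/(q-q_{n-1})\bigr)$ coincides with $(a/b,\,c/d)$, but what you established is uniqueness of the rational of denominator $\le q$ strictly between $a/b$ and $c/d$, not directly uniqueness of a Farey pair with mediant $p/q$. The missing step is short: if $a'/b'<c'/d'$ is any Farey pair with mediant $p/q$, then $b',d'<q$ and $a'/b'<p/q<c'/d'$, so by your best-approximation inequality applied to \emph{each} pair, $a'/b'\le a/b$ and $a/b$ cannot lie strictly between $a'/b'$ and $c'/d'$, forcing $a'/b'=a/b$, and symmetrically $c'/d'=c/d$. (Alternatively: from $p/q-a'/b'=1/(b'q)\ge 1/(bq)$ one gets $b'\le b$, likewise $d'\le d$, and $b'+d'=q=b+d$ forces equality.) With that sentence added the argument is airtight.
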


We can now state our main result.

\begin{thm}
Let $X$ be the square flat torus with a vertical slit of length $\rho$. Let $h=(m,n)\in H_1(X,\ZZ)$ be a primitive integral homology class, and let $a/b$ and $c/d$ be the Farey parents of $n/m$, with $a/b<c/d$. The unit ball $\Bb$ of the stable norm of $X$ has an vertex in the direction $h$ if and only if either

\begin{itemize}\renewcommand{\labelitemi}{$\bullet$}
\item $|m|\le 1/\rho$, and in that case $\St{(m,n)}=\sqrt{m^2+n^2}$ \\ \\
or
\item $|m|>1/\rho$ and $(|b|\le 1/\rho$ or $|d|\le 1/\rho)$, and in that case $\St{(m,n)}=\sqrt{b^2+(a+\rho)^2}+\sqrt{d^2+(c-\rho)^2}$. 
\end{itemize}
In every other direction of $H_1(X,\RR)$, the point of the unit sphere $\partial B$ of the stable norm in this direction lies in the interior of a segment. In particular, if $m,b,d> 1/\rho$ we have $\St{(m,n)}=\St{(b,a)}+\St{(d,c)}$.
\end{thm}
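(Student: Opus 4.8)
My plan is to work directly with the geometry of geodesics on $X$. The key observation is that any closed geodesic on $X$ representing a homology class is either a straight-line closed geodesic on the underlying flat torus that happens to miss the slit $I$, or a concatenation of straight segments that touch the slit (wrapping around one side of it). So the first step is to set up a normal form: for a primitive class $h=(m,n)$, I want to describe the candidate shortest loops. A loop that never touches the slit and represents $(m,n)$ must avoid crossing the vertical interval $I$; since $I$ has length $\rho$, a straight geodesic of slope $n/m$ (going $m$ times around horizontally, $n$ times vertically) can be isotoped off the slit precisely when its "horizontal winding" is small enough that the geodesic fits in the complementary strip — this is where the condition $|m|\le 1/\rho$ comes from: the straight segment must be able to dodge a slit of vertical extent $\rho$ while making $|m|$ horizontal turns, which is possible exactly when $|m|\rho\le 1$. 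In that regime $f(h)=\sqrt{m^2+n^2}$ and this is clearly a vertex (extreme point) of the unit ball, since the Euclidean norm is strictly convex and lies pointwise above the stable norm with equality here.

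**The slit-touching case and the Farey mechanism.**

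When $|m|>1/\rho$, no straight representative avoids the slit, so a minimizing loop must touch the slit. I would argue that an optimal such loop touches the slit exactly once (touching it more often only lengthens the loop, by a convexity/reflection argument — one can "unfold" across the slit and use the triangle inequality to merge two visits into one). A loop touching the slit once, say at a point of $I$, splits into two geodesic arcs that go from one side of the slit back to the other; unfolding the torus, each arc becomes a straight segment, and the two arcs realize homology classes $(b,a)$ and $(d,c)$ with $(b,a)+(d,c)=(m,n)$. The crucial point is determining which splitting $(m,n)=(b,a)+(d,c)$ is optimal. Here is where the Farey sequence enters: the two arcs emanate from the two *sides* of the slit, which, after unfolding, are displaced vertically from each other by exactly $\rho$ (the slit length); so one arc has "effective" displacement $(b,a+\rho)$ and the other $(d,c-\rho)$ (the signs/orientation chosen so the $\rho$-shifts cancel), giving total length $\sqrt{b^2+(a+\rho)^2}+\sqrt{d^2+(c-\rho)^2}$. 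Minimizing this over all ways to write $(m,n)=(b,a)+(d,c)$ with $b,d$ of the same sign: the minimum is attained when $a/b$ and $c/d$ are as close as possible to $n/m$, i.e. when they are the *best rational approximants* of $n/m$ with smaller denominator — which by Proposition 1.3 are precisely the Farey parents of $n/m$. This reduces the optimization to a lemma: among all $(b,a),(d,c)$ with $(b+d,a+c)=(m,n)$ and $b,d>0$, the length function above is minimized at the Farey parents.

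**The vertex-versus-edge dichotomy.**

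Finally I must determine when the resulting point is a vertex of $\Bb$ and when it lies in the interior of an edge. A point $f(h)^{-1}h\in\partial\Bb$ is a vertex iff $f$ is "strictly concave at $h$" in the sense that $f(h_1+h_2)<f(h_1)+f(h_2)$ for any nontrivial splitting. If one of the Farey parents $(b,a)$ or $(d,c)$ has $|b|\le 1/\rho$ (resp. $|d|\le 1/\rho$), then that arc is actually realized by a *straight* segment avoiding the slit, and the minimizing configuration is rigid — no further splitting helps — so we get a vertex; I would verify strict subadditivity by a direct comparison of square roots. Conversely, if *both* parents have denominator $>1/\rho$, then each parent arc itself wants to touch the slit, and one shows $f(m,n)=f(b,a)+f(d,c)$ exactly: the whole loop decomposes further, the stable norm is additive along this splitting, and the point lies on the segment joining $f(b,a)^{-1}(b,a)$ and $f(d,c)^{-1}(c,d)$ directions — hence in the interior of an edge, not a vertex. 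The main obstacle I anticipate is the lower bound: showing that *no* representative loop of $h$ is shorter than the Farey-parent construction. Upper bounds are explicit constructions, but the lower bound requires ruling out all competing geodesic configurations; I expect to handle this by an unfolding/development argument into the universal cover or into $\RR^2$ minus a periodic family of slits, combined with the convexity of Euclidean length, reducing every competitor to the canonical one- or two-segment form and then invoking the Farey minimality lemma.
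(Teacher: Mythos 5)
Your outline has the right overall shape (straight segments for visible classes, a broken geodesic through the slit endpoint governed by the Farey parents, additivity when both parents are non-visible), but two of its load-bearing steps do not hold as stated. First, the claim that an optimal slit-touching loop touches the slit \emph{exactly once} is false precisely in the regime where you need to see a flat: when $b,d>1/\rho$ the minimizer is a concatenation of minimizers for $(b,a)$ and $(d,c)$, each of which may itself touch the slit (recursively), and your one-touch formula $\sqrt{b^2+(a+\rho)^2}+\sqrt{d^2+(c-\rho)^2}$ would then \emph{undercut} the true value $\St{(b,a)}+\St{(d,c)}$ --- the broken path through $(b,a+\rho)$ meets the slit elsewhere in that case and does not project to a closed curve at all. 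The paper's dichotomy is not one-touch versus many-touch but \emph{simple versus non-simple} minimizer: a flat occurs if and only if some minimizer self-intersects; the shortest non-simple representative is shown by a convex-hull argument to split along the Farey parents with length $\St{(b,a)}+\St{(d,c)}$; and the simple candidates are handled separately. Second --- and this is the heart of the proof, which you explicitly defer --- the lower bound for simple representatives rests on the Osborne--Zieschang normal form $V_{m,n}=V_{d,c}V_{b,a}=V_{b,a}tsV_{d,c}'$ for the unique primitive conjugacy class over $(m,n)$. This forces the lift of any simple representative to cross the vertical $x=b$ between $(b,a+\rho)$ and $(b,a+1)$, which certifies the broken path through $(b,a+\rho)$ as the shortest simple candidate, and when $b,d>1/\rho$ it forces a simple geodesic to change direction twice, hence to visit an endpoint of the slit three times --- impossible with only two endpoints, so no simple geodesic exists and the class lies on a flat. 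A generic ``unfolding plus convexity'' reduction will not produce these constraints; some input identifying the simple free homotopy class is needed.

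Two further points are glossed over. You must actually verify that the broken path avoids the slit, which is exactly where the hypotheses $b\le 1/\rho$ or $d\le 1/\rho$ enter (via fractional-part estimates on $\{k(a+\rho)/b\}$ and $\{k(c-\rho)/d\}$), and that it is shorter than the non-simple competitor (the paper does this by a triangle-inclusion perimeter comparison using $bc-ad=1$); your ``direct comparison of square roots'' over \emph{all} splittings is not available until the optimal splitting has been identified. Finally, the theorem asserts \emph{vertices}, not merely extreme points: at a visible direction the point of $\partial\Bb$ on the unit circle could a priori admit only the single tangent supporting line, and the paper excludes this by an asymptotic estimate on the slopes of the chords to the accumulating neighbouring vertices. ``Clearly a vertex since the circle is strictly convex'' establishes only that the point is extreme.
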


Note that since the unit ball $\Bb$ of the stable norm has infinitely many vertices it is not a polygon, although it very much looks like one at first glance. Sections 1, 2 and 3 are devoted to proving this theorem. In section 4 we extend our result to general slit tori, i.e to slit tori whose flat metric comes from any parallelogram instead of from a square, and with a slit of any direction. \newline 

A question one might ask is which convex bodies can be realized as the unit ball of the stable norm of a manifold. In dimension $n\ge 3$, Babenko and Balacheff showed that given a closed smooth manifold $M$, any centrally symmetric polytope with vertices of rational direction can be realized as the unit ball of the stable norm of a Riemannian metric on $M$.
However, in dimension $2$ Massart showed in \cite{Mas2} that the dimension of the flats of the unit ball of a closed orientable \emph{smooth} surface of genus $g$ is at most $g-1$; in particular, the unit ball of the stable norm is far from being a polytope. But what about the stable norm of singular metrics?
In section 5, we glue two slit tori $X_1$ and $X_2$ along a long flat cylinder to obtain a genus $2$ half-translation surface $\Sigma$ on which we are able to compute the stable norm.

\begin{thm}\label{convex hull}
    The unit ball of the stable norm of $\Sigma$ is the convex hull of the set $$ \left \{ \frac{(m,n,0,0)}{\StS{(m,n,0,0}} \text{ with } (m,n)\in V(X_1)  \right \} \cup \left\{\frac{(0,0,p,q)}{\StS{(0,0,p,q)}} \text{ with } (p,q)\in V(X_2)\right \} $$ where $V(X_i)$ is the set of directions of vertices of the stable norm of $X_i$.
\end{thm}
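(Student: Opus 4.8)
The plan is to reduce the statement to the single norm identity
\[
\StS{(u,v)}\;=\;\|u\|_{X_1}+\|v\|_{X_2}\qquad\text{for all }(u,v)\in H_1(\Sigma,\RR)=H_1(X_1,\RR)\oplus H_1(X_2,\RR),
\]
the direct sum coming from Mayer--Vietoris, since the core circle $\delta$ of the gluing cylinder is separating in $\Sigma$ and null-homologous in each slit torus (dually $H^1(\Sigma,\RR)\cong H^1(X_1,\RR)\oplus H^1(X_2,\RR)$), compatibly with the coordinates $(m,n,p,q)$ of the statement. Granting this identity, the rest is convex geometry. Its case $v=0$ reads $\StS{(m,n,0,0)}=\|(m,n)\|_{X_1}$, so the first family appearing in the theorem is exactly the vertex set of $\Bb_{X_1}$, placed in the coordinate plane $\RR^2\times\{0\}$; by our main theorem every non-vertex point of $\partial\Bb_{X_1}$ lies in the interior of a segment joining vertices, so $\Bb_{X_1}$ is the convex hull of its vertex set (a finite-dimensional convex hull of a compact set, since a limit of vertex directions is again a vertex direction by the description of $V(X_1)$, hence closed), and similarly for $X_2$. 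Finally, for two convex bodies $\Bb_{X_1},\Bb_{X_2}$ lying in complementary coordinate planes, every convex combination of points of $\Bb_{X_1}\cup\Bb_{X_2}$ splits as $a x_1+b x_2$ with $a+b=1$, $x_i\in\Bb_{X_i}$, so $\mathrm{conv}(\Bb_{X_1}\cup\Bb_{X_2})=\{(u,v):\ \|u\|_{X_1}+\|v\|_{X_2}\le 1\}$; by the identity this is $\Bb_\Sigma$, and it coincides with the convex hull of the two vertex families, i.e.\ with the set in the statement.

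For the inequality $\StS{(u,v)}\le\|u\|_{X_1}+\|v\|_{X_2}$ it suffices, by continuity of the norm, to treat rational $(u,v)$. For large $N$ with $N(u,v)$ integral, build a \emph{barbell} loop: a near-optimal loop for $Nu$ inside $X_1$, an arc across the cylinder, a near-optimal loop for $Nv$ inside $X_2$, and the reverse arc. It represents $N(u,v)$ (the two cylinder traversals cancel in homology) and has length at most $f_{X_1}(Nu)+f_{X_2}(Nv)+2\ell_C$, where $\ell_C$ is the fixed length of the cylinder; dividing by $N$ and letting $N\to\infty$ gives the bound.

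The reverse inequality $\StS{(u,v)}\ge\|u\|_{X_1}+\|v\|_{X_2}$ is the heart of the matter, and here the length of the cylinder is used. Take a near-length-minimizing curve $c_N$ representing $N(u,v)$ and cut it along the two disjoint circles $\partial X_1$, $\partial X_2$, so that $l(c_N)=\lambda_1+\lambda_2+\mu$, the lengths of the parts inside $X_1$, inside $X_2$, and inside the cylinder. Closing the $j_1$ arcs of $c_N\cap X_1$ along $\partial X_1$, at cost at most $j_1\ell_\delta/2$ (with $\ell_\delta$ the circumference), produces a $1$-cycle in $X_1$ of class $Nu$ — here one only needs that $[\partial X_1]=0$ in $H_1(X_1)$ and that $c_N$ crosses $\partial X_1$ algebraically zero times — and the length of any $1$-cycle representing $Nu$ is at least $N\|u\|_{X_1}$; thus $\lambda_1\ge N\|u\|_{X_1}-j_1\ell_\delta/2$ and likewise $\lambda_2\ge N\|v\|_{X_2}-j_2\ell_\delta/2$. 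On the other hand, for a minimizer each arc of $c_N$ in the cylinder may be assumed to run all the way from $\partial X_1$ to $\partial X_2$ (an arc with both endpoints on the same boundary circle is homotopic rel endpoints to a no-longer arc on that circle), so $j_1=j_2=:j$ and $\mu\ge 2j\ell_C$. Adding,
\[
l(c_N)\;\ge\;N\bigl(\|u\|_{X_1}+\|v\|_{X_2}\bigr)+j\,(2\ell_C-\ell_\delta),
\]
which is $\ge N(\|u\|_{X_1}+\|v\|_{X_2})$ once the cylinder is long, say $\ell_C\ge\ell_\delta/2$; dividing by $N$ gives the claim. (The same bound can be obtained dually, by splicing near-optimal closed $1$-forms of comass $\le 1$ on $X_1$ and $X_2$ into one on $\Sigma$, damping each to an exact form over its half of the cylinder — again it is the length of the cylinder that keeps the comass $\le 1$.)

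I expect the main obstacle to be the bookkeeping of the previous paragraph: justifying that a length-minimizer may be chosen so that each trace $c_N\cap X_i$ is a union of $j$ arcs, that every cylinder arc crosses the whole cylinder, and that closing up along $\partial X_i$ both costs at most $\ell_\delta/2$ per arc and lands in the class $Nu$ — and, relatedly, making ``long cylinder'' quantitative. The homological splitting, the convex-geometry reduction, and the barbell upper bound are then routine.
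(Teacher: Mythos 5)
Your proposal is correct and follows essentially the same route as the paper: reduce the theorem to the additivity $\StS{(u,v)}=\|u\|_{X_1}+\|v\|_{X_2}$, prove the lower bound by cutting a near-minimizer at the cylinder, closing the resulting arcs along the boundary circles at cost at most $\ell_\delta/2=\rho$ apiece, and charging each crossing the full cylinder length (your threshold $2\ell_C\ge\ell_\delta$ is exactly the paper's $w>\rho$), then finish with the convex-hull bookkeeping. Your version is if anything slightly more careful than the paper's, handling an arbitrary number $j$ of crossings and the compactness of the vertex set explicitly, but the ideas are the same.
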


In particular, the unit ball of the stable norm of $\Sigma$ has faces of dimension $3$, which is maximal since $H_1(\Sigma,\RR)$ has dimension $4$. More generally, glueing $n$ slit tori with the same construction gives us a half-translation surface of genus $n$ whose flats of the stable norm are of maximal dimension, that is $2n-1$. Again, the unit ball of the stable norm of $\Sigma$ is not a polytope as it has infinitely many vertices, but it looks a lot like one. Although getting such a bizarre-looking unit ball as we abandon the smoothness assumption on the metric is not a big surprise, it is remarkable that it happens after losing so little smoothness, as one could argue that half-translation surfaces are as nice as singular surfaces can get.
\newline 

Finally, in section 6 we investigate a counting problem related to the stable norm. The classical simple curve counting problem on surfaces is the following: how many are there simple closed curves of length less than a positive number $x$ on a given surface? This question, along with many related problems, has seen significant progress during the last two decades. Now, one might ask which homology classes do these curves represent. We say that an integral homology class on a surface $S$ is \emph{simple} if its stable norm is realized by the length of a simple closed curve on $S$. Then the curve counting problem becomes: how many are there simple homology classes of stable norm less than $x$ on a given surface? It is known (see Balacheff-Massart \cite{Bal-Mas} Theorem C) that it is possible to find closed non-orientable surfaces with only a finite number of simple homology classes. On the other hand, Babenko \cite{babenko} in dimension $2$ and later Burago-Ivanov-Kleiner \cite{Bur-Iva} in every dimension showed that on flat tori there are infinitely many simple homology classes, as in the case of the punctured hyperbolic torus (McShane-Rivin \cite{McShane Rivin}). Thus, can we find asymptotic estimates? Gutkin and Massart \cite{Gut-Mas} showed, based on previous work by Masur \cite{Masur88}, \cite{Masur90} on the growth rate of closed orbits for quadratic differentials, that the number of simple homology classes of stable norm less than $x$ on translation surfaces grows quadratically in $x$. For instance on flat tori the simple homology classes correspond to the primitive elements of $\ZZ^2$, so the asymptotic behaviour is that of  $\mathlarger{\frac{6}{\pi}} x^2$. Since any half-translation surface is doubly covered by a translation surface, these two classes of surfaces share many geometrical properties. So, is the asymptotic growth of simple homology classes on half-translation surfaces quadratic? We use the example of the half-translation surface $\Sigma$ constructed in section 5 to show that the answer to this question is negative. 

\begin{thm}\label{comptage}
Let $p(x)$ be the number of simple homology classes on $\Sigma$ with stable norm less than $x$.
Then $$p(x) \sim 8 \left(\sum_{b=1}^{\lfloor 1/\rho \rfloor} \frac{\varphi(b)}{b}\right )  x \ln x + O(x)$$
where $\varphi$ is Euler's totient function.
\end{thm}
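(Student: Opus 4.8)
The plan is to use Theorem~\ref{convex hull} together with the length of the gluing cylinder to reduce the count on $\Sigma$ to a count on a single slit torus, then to use Theorem~1 to describe the relevant classes through their Farey parents, and finally to evaluate the resulting sum by a Mertens‑type estimate.

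First I would identify the simple homology classes of $\Sigma$. If the flat cylinder along which $X_1$ and $X_2$ are glued is long enough, then any simple closed curve meeting both $X_1$ and $X_2$ has length larger than $\StS{h}$ for every fixed $h$; hence no class $(m,n,p,q)$ with $(m,n)\neq 0\neq(p,q)$ can be simple, and by Theorem~\ref{convex hull} the restriction of $\StS{\cdot}$ to $H_1(X_i,\RR)$ is exactly the stable norm of $X_i$. Therefore $p(x)$ is the number of simple classes of $X_1$ of stable norm $<x$ plus the same quantity for $X_2$, and as the two slit tori are isometric the problem reduces to counting, in a fixed slit torus $X$ of slit length $\rho$, the primitive classes whose stable norm is attained by an embedded closed geodesic and is $<x$. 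Denote this count $N(x)$; the asymptotics of $p(x)$ follows by combining the two copies and using the central symmetry of the unit ball.

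Next I would describe these classes via Theorem~1. The Case~1 vertices $(m,n)$ with $|m|\le 1/\rho$ are realized by the straight simple geodesic of slope $n/m$, and there are only $O(x)$ of them of norm $<x$, so they go into the error term. For $|m|>1/\rho$ and $\min(|b|,|d|)\le 1/\rho$, Theorem~1 gives $\St{(m,n)}=\sqrt{b^2+(a+\rho)^2}+\sqrt{d^2+(c-\rho)^2}$, attained by the broken geodesic through the two cone points whose two arcs wind like the Farey parents $a/b<c/d$. I would determine exactly when this broken geodesic is embedded — that is, when its two arcs neither self‑intersect nor cross one another — since these are precisely the genuinely simple classes among Case~2; the vertices that fail this, together with the finitely‑many‑$m$ family in which both parents are small, number only $O(x)$ with norm $<x$. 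I would then organize the surviving classes by their small‑denominator Farey parent: fixing $b$ with $1\le|b|\le 1/\rho$ and $a$ coprime to $b$, the simple Case~2 classes having $a/b$ as small parent form a one‑parameter family $h_k$, $k\ge 1$, obtained by taking successive mediants of $a/b$ with its Farey right‑neighbours, so that $h_k=k(b,a)+(d_0,c_0)$ and, by the norm formula above, $\St{h_k}=k\sqrt{a^2+b^2}+O_{a,b}(1)$.

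Then I would evaluate the sum. For fixed $(a,b)$ the number of $k$ with $\St{h_k}<x$ equals $\dfrac{x}{\sqrt{a^2+b^2}}+O(1)$ once $\sqrt{a^2+b^2}\ll x$ and vanishes otherwise, so summing over $a$ coprime to $b$ with $|a|$ up to the relevant multiple of $x$ and invoking the Mertens‑type identity $\sum_{1\le a\le Y,\ \gcd(a,b)=1}\tfrac1a=\tfrac{\varphi(b)}{b}\ln Y+O_b(1)$ — equivalently $\sum_{\gcd(a,b)=1,\ |a|\le Y}(a^2+b^2)^{-1/2}=\tfrac{2\varphi(b)}{b}\ln Y+O_b(1)$ — each $b$ contributes $\sim\tfrac{2\varphi(b)}{b}x\ln x$. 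Summing over $1\le b\le\lfloor 1/\rho\rfloor$, and accounting for the finitely many sign/orientation choices in the broken‑geodesic geometry and for the two slit tori, yields $p(x)\sim 8\big(\sum_{b=1}^{\lfloor 1/\rho\rfloor}\varphi(b)/b\big)x\ln x$, with all discarded contributions (Case~1, both parents small, the cutoff $|a|\asymp x$, and the $O_{a,b}(1)$ summed over $a$) absorbed into the $O(x)$ term. The hard parts will be, first, the geometric step of deciding exactly which Case~2 vertices are realized by an \emph{embedded} geodesic, since only these are truly simple and this is what pins down the range of summation; and second, carrying the Farey/continued‑fraction bookkeeping precisely enough that the constant comes out as $8\sum_{b\le 1/\rho}\varphi(b)/b$ rather than merely $\asymp x\ln x$, which is exactly where the densities $\varphi(b)/b$ and the Mertens estimate enter.
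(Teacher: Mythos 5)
Your strategy is sound and its combinatorial skeleton coincides with the paper's: reduce to a single slit torus via the long cylinder (the paper does this through Theorem \ref{glue}, noting that minimizing multicurves never cross the cylinder, so the count on $\Sigma$ is twice the count on $X$); identify the simple classes with the vertex directions of $\Bb$; and organize the non-visible vertices as one-parameter families of Farey children $k(b,a)+(d_0,c_0)$ of each visible class $(b,a)$, with $\St{h_k}=k\|(b,a)\|_2+O(1)$. Where you genuinely diverge is the final evaluation: the paper encodes the count in a generalized Dirichlet series $F(s)=\sum_n l_n^{-s}$, shows $F(s)=4\bigl(\sum_{b\le 1/\rho}\varphi(b)/b\bigr)\zeta^2(s)+C\zeta(s)+R(s)$, and extracts $x\ln x$ from Perron's formula and the divisor-sum asymptotic $\sum_{n\le x}\tau(n)\sim x\ln x$; you instead count directly, getting $x/\sqrt{a^2+b^2}+O(1)$ values of $k$ per family and then applying the Mertens-type estimate $\sum_{\gcd(a,b)=1,\,|a|\le Y}(a^2+b^2)^{-1/2}=\tfrac{2\varphi(b)}{b}\ln Y+O_b(1)$. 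Your route is more elementary and avoids the paper's somewhat delicate handling of the remainder in the Perron integral; the Dirichlet-series route makes the bookkeeping of the multiplicative constant more mechanical (the factor $4$ per torus arises visibly as $2$ child families per visible class times $2$ from extending $k\ge 0$ to $k\in\ZZ$, and the overcounting of classes with two visible parents is absorbed into a lower-order $E(s)$). Two remarks on your caveats. First, the "hard geometric step" you anticipate --- deciding which Case 2 vertices are realized by an embedded geodesic --- is not needed: by the lemma opening Section 2, $\Bb$ is strictly convex in a direction iff every minimizing curve there is simple, and Propositions \ref{2 parent}, \ref{1 parent} and \ref{no simple} already show that vertices are exactly the simple primitive classes, so citing Theorem 1 suffices. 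Second, the place you defer to "sign/orientation choices" is exactly where the constant lives, and is the one step you would need to carry out carefully (two child families per visible parent, the central symmetry $h\mapsto -h$, and subtracting the doubly-counted first children of two visible parents); with that done, your argument closes and the error terms you list are indeed $O(x)$ since $b$ is bounded and each of the $O(x)$ admissible $a$ contributes a uniformly bounded error in the count of $k$.
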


We would like to point out that this result agrees with Conjecture 1.2 in \cite{Mas-Par}. Roughly speaking, this conjecture states that for any closed orientable surface $M$ of genus $2$ equipped with a reasonable Finsler metric, the number $p(x)$ of simple homology classes of $M$ with stable norm less than $x$ satisfies
$$ \lim_{x\rightarrow \infty } \frac{p(x)}{x^2} = \Omega$$
where $\Omega$ is the area of the closure of the set of vertices of the unit ball of the stable norm of $M$. In the case of our surface $\Sigma$, the closure of the set of vertices of the unit ball has measure zero, and Theorem \ref{comptage} confirms that $p(x)$ is indeed sub-quadratic. \\

\textbf{Acknowledgments} The author would like to express his gratitude towards Daniel Massart for the many enlightening conversations that led to the present paper, along with his careful reading of several versions of the manuscript. Thanks also to Tom Ferragut and Iv\'an Rasskin for the time and interest they gave kindly to this work.

\section{Visible homology classes}

In this first section we reformulate the problem of computing the stable norm in a more practical way, and we compute the stable norm of a family of homology classes that we call \emph{visible classes}. \newline \newline

The main goal of this paper is to compute explicitely the stable norm of any given homology class on the slit torus $X$. Equivalently, as a norm is uniquely determined by its unit ball, we want to describe the geometry of the unit ball of the stable norm. Note that we only need to compute the stable norm of integral homology classes, as the stable norm of the other classes is obtained by homogeneity and continuity of the norm. So, for any integral homology class $h$, we need to compute $f(Nh)$ with $N\in\NN^*$, where $f(h) = \inf\limits_{\gamma}  \{ l(\gamma), $ with $\gamma$ a closed curve such that $[\gamma]=h \}$. So we need to compute the infimum of the lengths of curves representing $N.h$.  But if $\gamma$ is a closed curve representing a given integral class, so is any curve obtained by translating $\gamma$ upwards, provided that it does not encounter the slit, and the length of the translated curve is the same as $\gamma$'s. So by translating upwards until we encounter the slit any closed curve representing an integral class $h$ is freely homotopic to a closed curve of same length passing through the lower endpoint of the slit. Hence, without loss of generality, we can narrow down our search for short curves representing a given integral class to closed curves passing through the lower endpoint of the slit. \newline

Consider $p:\tilde{X}\longrightarrow  X$ the abelian covering of $X$, i.e the normal covering of $X$ whose transformation group is isomorphic to $H_1(X,\ZZ) \cong \ZZ^2$. One convenient way to picture $\tilde{X}$ is the Euclidean plane from which we remove all $\ZZ^2$-translates of the vertical segment $[(0,0),(0,\rho)]$ of length $\rho$ : this way, the local isometry between $\tilde{X}$ and $X$ is the obvious projection. These vertical segments are the lifts of the slit to $\tilde{X}$: by extension we will also call them the slit. Now, given an integral homology class $h=(m,n)\in\ZZ^2$ on $X$, any closed curve on $X$ representing $h$ (and passing through the lower endpoint of slit, which, as we have seen, we can always assume without loss of generality) can be lifted in a unique way to $\tilde{X}$ as a path  of same length from the origin $(0,0)$ to the point $(m,n)$. Thus, computing $f(h)$ is equivalent to finding the shortest rectifiable path from $(0,0)$ to $(m,n)$ in $\tilde{X}$. More precisely, if we set
	
	$$ \tilde{f}(h) := \inf\limits_{\tilde{\gamma}} l(\tilde{\gamma})$$
	where the infimum is taken over all the rectifiable paths in $\tilde{X}$ from $(0,0)$ to $(m,n)$, we have $f(h) = \tilde{f} (h)$ so $\St{h}= \lim\limits_{N\to \infty} \tilde{f}(Nh)/N$.
	
	\begin{prop}
		Let $h\in H_1(X,\ZZ)$ be an integral homology class on $X$. Then the sequence $(f(Nh)/N)_{N\in\NN}$ is constant.
	\end{prop}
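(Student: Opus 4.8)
The plan is to prove $f(Nh)\le Nf(h)$ and $f(Nh)\ge Nf(h)$ separately, working entirely in the abelian cover $\tilde X$. Write $O=(0,0)$ and $P_j=(jm,jn)$, let $d$ be the length metric of $\tilde X$, and recall from the discussion above that $f(kh)=\tilde f(kh)=d(O,P_k)$. The map $\tau\colon x\mapsto x+(m,n)$ is an isometry of $\tilde X$, since $(m,n)\in\ZZ^2$ lies in the deck group, and $\tau^j(O)=P_j$. The inequality $f(Nh)\le Nf(h)$ is immediate: concatenating a shortest path $\sigma$ from $O$ to $P_1$ with its translates $\tau(\sigma),\dots,\tau^{N-1}(\sigma)$ produces a path from $O$ to $P_N$ of length $N\,l(\sigma)=Nf(h)$. (This is just subadditivity of $N\mapsto f(Nh)$.)

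For the reverse inequality, using $d(P_{N-1},P_N)=d(O,P_1)=f(h)$ and arguing by induction on $N$, it suffices to show that $P_{N-1}$ lies on some shortest path from $O$ to $P_N$, i.e. that $d(O,P_N)\ge d(O,P_{N-1})+f(h)$. Equivalently, one wants to cut a shortest path $\tilde\gamma$ from $O$ to $P_N$ into $N$ consecutive subarcs, each of $d$-length at least $f(h)$. When $f(h)=\sqrt{m^2+n^2}$ — the case where the straight segment from $O$ to $P_1$ already avoids the slits, which should be the situation for the \emph{visible} classes treated later in this section — this is easy: cut $\tilde\gamma$ at its successive crossings with the lines $\{x:\langle x,(m,n)\rangle=j(m^2+n^2)\}$, $j=1,\dots,N-1$, orthogonal to $h$; the endpoints of the $j$-th piece differ by $m^2+n^2$ in the $(m,n)$-direction, so that piece has Euclidean length, hence $d$-length, at least $\sqrt{m^2+n^2}=f(h)$. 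When $f(h)>\sqrt{m^2+n^2}$ this orthogonal slicing only yields the weaker bound $N\sqrt{m^2+n^2}$, and one must instead cut along curves adapted to the periodic slit pattern: fix a shortest path $\sigma$ from $O$ to $P_1$ and cut $\tilde\gamma$ at its first intersections with the translates $\tau^j(\sigma)$, so that the $j$-th piece runs from $\tau^{j-1}(\sigma)$ to $\tau^j(\sigma)$ and, morally, costs one period, namely at least $l(\sigma)=f(h)$.

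Making this last step rigorous is the main obstacle. The difficulty is that $\tilde X$, although a flat surface with geodesic boundary, is \emph{not} CAT(0): two points straddling a slit are joined by two distinct shortest paths, one around each endpoint of the slit. Hence one cannot argue abstractly that the $\tau$-invariant broken curve $A:=\bigcup_{j\in\ZZ}\tau^j(\sigma)$ is a globally length-minimizing line through the $P_j$, nor that the nearest-point projection onto $A$ is $1$-Lipschitz; moreover $\tau^{j-1}(\sigma)$ and $\tau^{j}(\sigma)$ actually meet at $P_j$, so the lower bound $f(h)$ on each piece is not a formality and really needs that the piece traverses a full period. I expect the proof to require a hands-on analysis of how geodesics of $\tilde X$ thread through the vertical slits — typically a monotonicity statement, available because all slits point in the same direction and have length $\rho<1$, to the effect that a shortest path from $O$ to $P_N$ cannot pass to the far side of a translate $\tau^j(\sigma)$ without genuinely crossing it, so that $\tilde\gamma$ is decomposed into $N$ pieces each of $d$-length at least $f(h)$. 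Combined with the easy upper bound this forces $f(Nh)=Nf(h)$, and in particular $\St{h}=f(h)$ for every integral class $h$.
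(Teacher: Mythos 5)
Your upper bound and your treatment of visible classes are fine, but the proposal has a genuine gap exactly where you flag it: for a non-visible class you never actually prove that the shortest path from $O$ to $P_N$ decomposes into $N$ pieces each of length at least $f(h)$. The slicing-along-$\tau^j(\sigma)$ idea runs into the two problems you yourself identify (consecutive translates of $\sigma$ meet at the points $P_j$, so a ``piece'' between them can be arbitrarily short, and nothing forces $\tilde\gamma$ to cross each translate in an orderly way), and you leave the needed monotonicity statement as a hope rather than a proof. As written, the hard inequality $f(Nh)\ge Nf(h)$ is established only for visible classes.

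The paper closes this gap with a different mechanism (Hedlund's argument), which you may want to compare with your plan: instead of cutting $\tilde\gamma$ into $N$ pieces at once, one peels off a single loop at a time by showing that $\tilde\gamma$ and its translate $T(m,n)\tilde\gamma$ must share a point. Take the leftmost and rightmost lines $\Delta_l,\Delta_r$ parallel to the direction $(m,n)$ that meet $\tilde\gamma$; if they coincide, $\tilde\gamma$ is a segment and there is nothing to do. Otherwise, pick $U\in\Delta_l\cap\tilde\gamma$ and $V\in\Delta_r\cap\tilde\gamma$; either $U'=T(m,n)U$ or $V'=T(m,n)V$ already lies on $\tilde\gamma$, or else $U'$ and $V'$ sit on opposite sides of a region bounded by $\Delta_l$ and $\tilde\gamma$, so the continuous translate $T(m,n)\tilde\gamma$ (well defined because the slits are permuted by integer translations) must cross the boundary of that region, and since it cannot cross $\Delta_l$ transversally it crosses $\tilde\gamma$ at some $P'$. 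The subarc of $\tilde\gamma$ from $T(-m,-n)P'$ to $P'$ projects to a closed curve in class $h$, hence has length at least $f(h)$; removing it and concatenating leaves a path to $P_{N-1}$, and one inducts $N$ times. This is purely topological, makes no use of visibility or of the geometry of the slits beyond their $\ZZ^2$-periodicity, and entirely avoids the CAT(0)/projection issues that block your slicing argument. If you want to salvage your own route, the missing ingredient is precisely such an intersection statement, and proving it essentially reproduces the argument above.
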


\begin{proof}
	We reproduce an old argument from Hedlund \cite{Hed}, originally stated on the torus. Let $h=(m,n)$ be an integral primitive homology class on $X$, i.e $\gcd(m,n)=1$. Let $N\in \NN^*$ and let $\gamma$ be a rectifiable path on $\tilde{X}$ from $(0,0)$ to $(N.m,N.n)$. We will show that $\gamma$ projects onto $X$ as the union of $N$ closed curves representing $h$, thus $f(Nh) \ge Nf(h)$, and since the converse inequality is trivial we will finally get $f(Nh) = Nf(h)$. To do this, we apply the following algorithm.\newline

	Let $\Delta$ be the line from the origin to $(m,n)$ in $\tilde{X}$ (it might encounter the slit) and let $\Delta_l$ (resp. $\Delta_r$) the leftmost (resp. rightmost) parallel to $\Delta$ which meets $\gamma$. If $\Delta_l = \Delta = \Delta_r$ then $\gamma \subset \Delta$ and there is nothing left to prove, so let us assume that $\Delta_l \ne \Delta_r$. Let $U \in \Delta_l \cap \gamma$, and let $U' = T(m,n).U$, where $T(m,n)$ is the translation by the $(m,n)$ vector. \\ \\

	If $U' \in \gamma$ then we are done: since $U$ and $U'$ project onto the same point in $X$, the arc of $\gamma$ from $U$ to $U'$ projects onto a closed curve in $X$ representing $h=(m,n)$. We then remove this arc from $\gamma$ to obtain two subpaths of $\gamma$, and by concatenation we obtain a new (and shorter) path from $(0,0)$ to $((N-1).m, (N-1).n)$. \\ \\
	
	If $U'$ does not belong to $\gamma$, then $U'$ lies on the boundary of some connected region $\delta$, bounded by $\Delta_l$ and $\gamma$. Let $V \in \Delta_r \cap \gamma$ and assume that $V'=T(m,n).V$ does not belong to $\gamma$ (just as before, if $V' \in \gamma$ then we are done). By construction, $V'$ lies in the exterior of $\delta$: the only way it could belong to $\delta$ would be if he lied on $\gamma$, which we assumed is not the case. Consider the path $\gamma' = T(m,n) \gamma$: it is well-defined because $\gamma$ does not encounter the slit and the copies of the slit are all related by integral translations. By construction, the path $\gamma'$ passes through $U'$ and $V'$, one in the interior of $\delta$ and the other on its exterior: since $\gamma'$ is continuous, it must cross transversally the boundary of $\delta$ at some point $P'$. Since $\gamma'$ cannot intersect transversally the line $\Delta_l$, the point $P'$ must lie on $\gamma$ instead. Thus, the arc of $\gamma$ from $P := T(-m,-n) P' $ to $P'$ projects onto a closed curve on $C$ representing $h$. We then refer to the previous case: as before, we remove this arc from $\gamma$ to get a shorter path. \\ \\
	
	We repeat this process $N$ times, until we have decomposed $\gamma$ as $N$ arcs, each projecting onto representatives of $h$ in $X$.

 \begin{center}
	\includegraphics[scale=0.6]{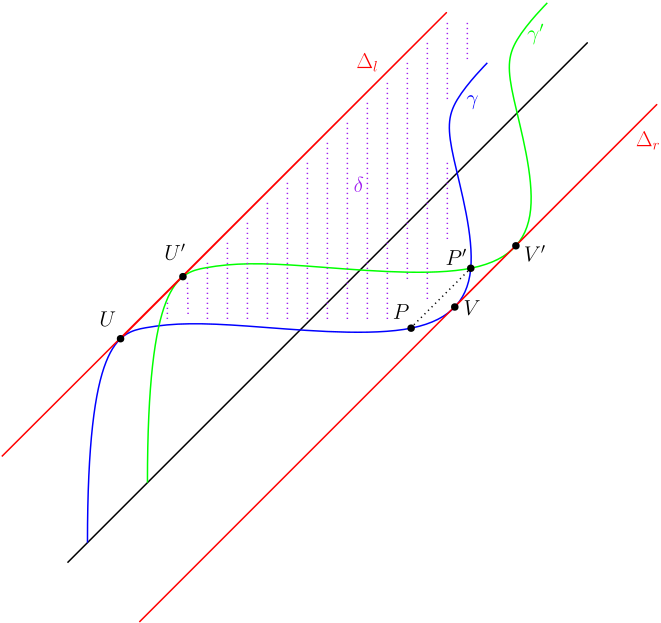}
	\captionof{figure}{The geometric construction from the proof}

\end{center}
	
\end{proof}

Since the set of arc length parametric closed curves in $X$ representing an integral homology class $h$ whose length does not exceed a fixed constant $C>0$ is compact, by an Arzela-Ascoli argument we get
\begin{cor}
	Let $h\in H_1(X,\ZZ)$ be an integral homology class. Then $f(h)= \St{h}$, and the infimum in the definition of $f$ is in fact a minimum. 
\end{cor}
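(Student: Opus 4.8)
The plan is to obtain both assertions from the Proposition just proved together with a standard Arzela-Ascoli argument. The equality $f(h)=\St{h}$ is immediate: by definition $\St{h}=\lim_{N\to\infty}f(Nh)/N$, and the Proposition says the sequence $(f(Nh)/N)_{N\ge1}$ is constant, so its limit equals its first term $f(1\cdot h)=f(h)$. It therefore remains only to show that the infimum defining $f(h)$ is attained.

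For this I would pass to the abelian cover $\tilde X$, where $f(h)=\tilde f(h)$ is the infimal length of a rectifiable path from $(0,0)$ to $(m,n)$. Pick a minimizing sequence $(\tilde\gamma_k)_k$ of such paths with $l(\tilde\gamma_k)\to\tilde f(h)$; discarding finitely many terms we may assume $l(\tilde\gamma_k)\le C:=\tilde f(h)+1$ for all $k$. Reparametrize each $\tilde\gamma_k$ proportionally to arc length on $[0,1]$, so that the $\tilde\gamma_k$ are all $C$-Lipschitz with $\tilde\gamma_k(0)=(0,0)$ and $\tilde\gamma_k(1)=(m,n)$. They are then contained in the closed metric ball of radius $C$ about the origin of $\tilde X$, which is compact, because $\tilde X$ --- the slit plane with each slit doubled into two boundary arcs --- is a complete, locally compact length space, hence proper. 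Thus the family $(\tilde\gamma_k)_k$ is uniformly bounded and equicontinuous, and Arzela-Ascoli produces a subsequence converging uniformly to a $C$-Lipschitz path $\tilde\gamma$ from $(0,0)$ to $(m,n)$. Since length is lower semicontinuous under uniform convergence, $l(\tilde\gamma)\le\liminf_k l(\tilde\gamma_k)=\tilde f(h)$, while $l(\tilde\gamma)\ge\tilde f(h)$ because $\tilde\gamma$ is admissible; hence $l(\tilde\gamma)=\tilde f(h)=f(h)$. Projecting $\tilde\gamma$ to $X$ yields a closed curve based at the lower endpoint of the slit, representing $h$, of length exactly $f(h)$, so the infimum is a minimum.

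The argument is routine. The only points requiring a little care are that $\tilde X$, despite the singular compactification along the slits, really is a proper metric space --- so that metric balls are compact and Arzela-Ascoli applies --- and that running the compactness argument upstairs with the endpoints $(0,0)$ and $(m,n)$ pinned down avoids having to verify that the homology class is preserved in the limit, which would be the main technical nuisance if one worked directly with closed curves in $X$.
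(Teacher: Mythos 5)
Your proof is correct and follows essentially the same route as the paper, which simply invokes the constancy of $(f(Nh)/N)_N$ from the preceding proposition together with a one-line Arzela--Ascoli compactness argument. Your version merely fills in the details the paper leaves implicit (properness of $\tilde X$, lower semicontinuity of length), and lifting to the abelian cover with pinned endpoints is a clean way to sidestep the question of homology preservation in the limit that the paper's downstairs formulation glosses over.
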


It follows that in order to compute the stable norm of a given integral class $h$, we have to find the length of its shortest representative on $X$. 

\begin{definition}
    Let $h\in H_1(X,\ZZ)$ be an integral homology class, and let $\gamma$ be a closed curve representing $h$ on $X$. If $l(\gamma)=\St{h}$ we say that $\gamma$ is a minimizing curve for $h$, or for short that $\gamma$ is minimizing. Such a minimizing representative might not be unique.
\end{definition}

Finally, note that since a closed curve on $X$ has the same length as its lift to the abelian covering $\tilde{X}$, by transporting the problem to $\tilde{X}$ computing $\St{(m,n)}$ is equivalent to a path-finding problem, that is finding the shortest path from $(0,0)$ to $(m,n)$ in a grid with vertical obstacles of length $\rho$ based at every integer point of the grid. This is commonly known in computer sciences as an \emph{any-angle path planning} problem, on which there already exist many efficient numerical methods (e.g see \cite{info}). \newline

Let $h=(m,n)$ be an integral homology class on $X$ and assume that $h$ is primitive, i.e $gcd(m,n)=1$. We want to compute the length of the shortest representative of $h$ on $X$, or equivalently the length of the shortest path from $(0,0)$ to $(m,n)$ on $\tilde{X}$. Then the natural candidate for being such a path is the segment $[(0,0) , (m,n)]$: if this segment does not intersect the slit, it projects onto a closed curve on $X$ that represents $h$, and this curve is obviously minimizing for $h$. So, does it intersect the slit? \newline 

Since the problem is symmetric, to simplify notations assume that $m,n\ge 0$. 
The segment $[(0,0),(m,n)]$ follows the equation $y=\mathlarger{\frac{n}{m}}x$, with $x\in [0,m]$. It crosses the vertical of the slit at points $(k,k\mathlarger{\frac{n}{m}})$, $k=1,...,m-1$: those are the points where an intersection with the slit may occur. Recall that the slit has length $\rho$.
Since the copies of the slit are the open segments $\left \{ (p,q+s) \text{ with } s\in]0,\rho[ \right \}$ with $(p,q)\in \ZZ^2$, we have to check how the fractional part $\left \{ k\mathlarger{\frac{n}{m}} \right \} $ of $ k\mathlarger{\frac{n}{m}} $ compares with $\rho$ for all $k=1,...,m-1$. More precisely, there is an intersection with the slit at the vertical $x=k$ if and only if

$$\left \{ k\frac{n}{m} \right \}< \rho$$

Remark that $\left \{ k\mathlarger{\frac{n}{m}} \right \} = \mathlarger{\frac{r}{m}}$, where $r$ is the remainder in the Euclidean division of $kn$ by $m$, i.e $kn = r$ in $\ZZ / m\ZZ$. Since $m$ and $n$ are coprime, $n$ is invertible in $\ZZ / m\ZZ$ and multiplication by $n$ is an automorphism of $\ZZ /m\ZZ$, so $r$ takes all the possible values $1,2,...,m-1$ when $k$ varies from $1$ to $m-1$. Hence, the smallest possible value for $\left \{ k\mathlarger{\frac{n}{m}} \right \}$ is $\mathlarger{\frac{1}{m}}$. Now, 
$$\frac{1}{m} < \rho \iff m\rho > 1$$
Hence if $m\rho \le 1$, the segment $[(0,0),(m,n)]$ does not encounter the slit and projects onto a minimizing representative of $(m,n)$ on $X$ of length $\sqrt{m^2 + n^2}$. We have shown:

\begin{prop}\label{visible}
	Let $h=(m,n)\in H_1(X,\ZZ)$ be a primitive integral homology class. Let $\rho$ be the length of the slit. The stable norm coincides with the Euclidean norm in the direction $(m,n)$ if and only if $m\rho \le 1$.
	Moreover, if $m\rho > 1$ then $\St{(m,n)} > \|(m,n)\|_2$.
\end{prop}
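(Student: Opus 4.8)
The ``if'' direction, together with the value $\St{(m,n)}=\sqrt{m^2+n^2}$ when $m\rho\le1$, is exactly the computation carried out just before the statement, so the plan is to prove the last assertion of the proposition, which simultaneously yields the ``only if'' direction. I would first record the elementary reductions: since $\rho<1$, the hypothesis $m\rho>1$ forces $m\ge2$, and using the reflections in the coordinate axes (which preserve the square and the vertical slit, hence induce isometries of $X$) one may assume $m,n\ge0$, so that $n\ge1$ by primitivity. I would then reuse the observation already made above the statement: taking $k\in\{1,\dots,m-1\}$ with $kn\equiv1\pmod{m}$, the segment $[(0,0),(m,n)]$ meets the line $\{x=k\}$ at the point $\bigl(k,\lfloor kn/m\rfloor+\tfrac{1}{m}\bigr)$, and since $0<\tfrac{1}{m}<\rho$ this point lies in the interior of the lift $\{k\}\times\bigl(\lfloor kn/m\rfloor,\lfloor kn/m\rfloor+\rho\bigr)$ of the slit; in particular $[(0,0),(m,n)]\not\subset\tilde X$.

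The main step is an argument by contradiction using the rigidity of Euclidean geodesics. Suppose $\St{(m,n)}=\|(m,n)\|_2$. By the Corollary there is a minimizing representative $\gamma$ of $(m,n)$ on $X$, and after the normalisation recalled earlier (every class has a shortest representative passing through the lower endpoint of the slit) I may assume $\gamma$ passes through that point; lifting to the abelian cover then gives a rectifiable path $\tilde\gamma$ in $\tilde X$ from $(0,0)$ to $(m,n)$ with $l(\tilde\gamma)=l(\gamma)=\sqrt{m^2+n^2}$. Now any rectifiable path in $\RR^2$ joining two points $A$ and $B$ and having length $\|A-B\|_2$ must have image equal to the segment $[A,B]$: applying the triangle inequality at an arbitrary intermediate point forces that point onto $[A,B]$, and the image is connected and contains $A$ and $B$. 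Hence $\mathrm{im}(\tilde\gamma)=[(0,0),(m,n)]$, which contains the slit point found above, contradicting $\tilde\gamma\subset\tilde X$. Therefore $\St{(m,n)}\ne\|(m,n)\|_2$, and since $\St{(m,n)}\ge\|(m,n)\|_2$ always (the Euclidean distance bounds the length of any lift from below), we conclude $\St{(m,n)}>\|(m,n)\|_2$.

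If one wishes to avoid invoking the existence of a minimizer, I would instead prove a quantitative estimate: with $\delta:=\min\bigl(\tfrac{1}{m},\rho-\tfrac{1}{m}\bigr)>0$, the intermediate value theorem shows that every lift to $\tilde X$ of a representative of $(m,n)$ meets $\{x=k\}$ at some point $(k,y)$ with $|y-kn/m|\ge\delta$, and the convex function $y\mapsto\sqrt{k^2+y^2}+\sqrt{(m-k)^2+(n-y)^2}$, whose unique minimum is $\sqrt{m^2+n^2}$ attained at $y=kn/m$, is therefore bounded below by $\sqrt{m^2+n^2}+c$ on $\{|y-kn/m|\ge\delta\}$ for some $c=c(m,n,\rho)>0$; hence $f((m,n))\ge\sqrt{m^2+n^2}+c$, and $\St{(m,n)}=f((m,n))>\|(m,n)\|_2$ by the Hedlund-type proposition above. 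In either route, the only point I expect to require genuine care is the passage from a shortest representative on $X$ to a path in $\tilde X$ with the prescribed endpoints $(0,0)$ and $(m,n)$, which is precisely the purpose of the earlier normalisation and should be cited explicitly.
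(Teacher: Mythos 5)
Your proposal is correct and follows the paper's approach: the ``if'' direction is exactly the fractional-part computation preceding the statement, and for $m\rho>1$ the paper likewise relies on the fact that the obstructed segment is the unique Euclidean length-minimizer, a step it leaves implicit and which you make explicit (in two valid ways — the rigidity argument via an actual minimizer, and the quantitative convexity bound, the latter having the mild virtue of not needing the Arzel\`a--Ascoli corollary). No gaps; this is the intended argument with the details filled in.
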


By the previous proposition, if $m\rho\le 1$ then a light ray emitted from $(0,0)$ and travelling in a straight line would illuminate the point $(m,n)$ on $\tilde{X}$. Hence the following terminology, borrowed from \cite{Har}:

\begin{definition}
    A primitive integral homology class $(m,n)$ is visible if $m\rho \le 1$, or equivalently if its stable norm coincides with the Euclidean norm. A visible direction in $H_1(X,\RR)$ is the direction of a visible homology class.
\end{definition}

\begin{cor}
	Let $\Bb$ be the unit ball of the stable norm and let $\DD$ be the unit Euclidean disk in $\RR^2$, with boundary $\SS^1$. Then $\Bb \subset \DD$, and $\partial\Bb \cap \SS^1$ is non empty, with intersections occurring exactly in the visible directions.
\end{cor}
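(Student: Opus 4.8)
The plan is to prove the three assertions in turn: the inclusion $\Bb\subset\DD$, the non-emptiness of $\partial\Bb\cap\SS^1$, and the identification of this intersection with the visible directions. The first is a reformulation of the inequality $\St{v}\ge\|v\|_2$. First I would note that for an integral class $h=(m,n)$, any closed representative of $h$ on $X$ lifts to a path in $\tilde X$ from $(0,0)$ to $(m,n)$ of the same length, and that path — being a curve in the Euclidean plane — has length at least $\|(m,n)\|_2$; hence $\St{(m,n)}=f(m,n)=\tilde f(m,n)\ge\|(m,n)\|_2$. This inequality then extends to $H_1(X,\QQ)$ by homogeneity and to $H_1(X,\RR)$ by continuity of the two norms, and $\St{v}\ge\|v\|_2$ for all $v$ is exactly the statement $\Bb\subset\DD$.

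For non-emptiness and the rational directions: since $0<\rho<1$, the primitive classes $(1,0)$ and $(0,1)$ satisfy $|m|\rho\le 1$, hence are visible, and by Proposition~\ref{visible} the unit vectors $(1,0),(0,1)$ (and more generally $(m,n)/\|(m,n)\|_2$ for every visible $(m,n)$) lie in $\partial\Bb\cap\SS^1$; this gives one inclusion. For the converse, a unit vector $v$ lies in $\partial\Bb\cap\SS^1$ if and only if $\St{v}=1=\|v\|_2$, i.e.\ $\St{v}=\|v\|_2$, a condition depending only on the direction of $v$. If that direction is rational it is the direction of a unique primitive class $\pm(m,n)$, and Proposition~\ref{visible} says $\St{(m,n)}=\|(m,n)\|_2$ exactly when $|m|\rho\le 1$, i.e.\ exactly when $(m,n)$ is visible; here one also uses the isometries of $X$ given by $(x,y)\mapsto(\pm x,\pm y)$, which act on $H_1$ by $(m,n)\mapsto(\pm m,\pm n)$ and preserve the stable norm, to reduce to $m,n\ge 0$.

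The main obstacle is to rule out $\St{v}=\|v\|_2$ when $v$ has irrational slope $\alpha$; by the symmetries above assume $\alpha\in(0,\infty)$. By homogeneity and continuity, $\St{v}=\lim_k \St{(q_k,p_k)}/\|(q_k,p_k)\|_2$ where $p_k/q_k$ are the continued-fraction convergents of $\alpha$ (so $q_k\to\infty$), and it suffices to prove a uniform lower bound $\St{(q,p)}\ge\|(q,p)\|_2+c\,q$ with $c=c(\alpha,\rho)>0$, which forces $\St{v}\ge 1+c/\sqrt{1+\alpha^2}>1$. Unfolding to $\tilde X$, a shortest path from $(0,0)$ to $(q,p)$ may be taken monotone in $x$, and — since every slit endpoint sits over an integer abscissa — it is then the graph of a function $g$ that is affine on each $[k,k+1]$, with $g(0)=0$, $g(q)=p$, and (because the path cannot cross a slit) with $e_k:=g(k)-k(p/q)$ satisfying $e_k\bmod 1\notin I_k$, where $I_k$ is an arc of length $\rho$ at position $-k(p/q)\bmod 1$.

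The crux is an equidistribution estimate. Since $\alpha$ is irrational, the orbit $(j\alpha\bmod 1)_j$ is dense, so there is $L=L(\alpha,\rho)$ such that the first $L$ points $\{j\alpha\bmod1:0\le j<L\}$ have all successive gaps at most $\rho/4$; for $q$ large the arcs $I_k$ attached to any block of $L$ consecutive indices are then so positioned that the range $\max e_k-\min e_k$ over that block must be at least $\rho/2$ — otherwise all of $\{e_k\bmod 1\}$ on the block would fit inside a single $I_k$, contradicting the obstacle condition. Summing the total-variation bound this gives over $\lfloor q/L\rfloor$ disjoint blocks yields $\sum_k|s_k|\ge c'q$ with $s_k:=e_{k+1}-e_k$; since $\sum_k s_k=0$ and $t\mapsto\sqrt{1+t^2}$ is strictly convex, one has $\St{(q,p)}-\|(q,p)\|_2=\sum_k\psi(s_k)$ with $\psi\ge 0$, $\psi(0)=0$ and $\psi(s)\gtrsim\min(|s|,s^2)$, and an elementary inequality turns $\sum_k|s_k|\ge c'q$ into $\sum_k\min(|s_k|,s_k^2)\gtrsim q$, which is the required bound. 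Proving this forced-oscillation statement — that a minimizing path cannot stay nearly straight on any bounded block of columns in an irrational direction — is where essentially all of the difficulty lies; the rest is bookkeeping with norms. If one is willing to invoke the main theorem (proved later), there is a shorter route for this last step: the Stern--Brocot recursion $\St{(q,p)}=\St{(b,a)}+\St{(d,c)}$ for the Farey parents of $p/q$, combined with the strict triangle inequality for $\|\cdot\|_2$, shows that $\St{(q_k,p_k)}-\|(q_k,p_k)\|_2$ obeys a super-additive recursion and hence grows proportionally to $\|(q_k,p_k)\|_2$.
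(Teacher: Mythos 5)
Your proof is correct, but it does considerably more than the paper, which treats this corollary as an immediate consequence of Proposition \ref{visible}: the inclusion $\Bb\subset\DD$ is just the observation that any lift to $\tilde X$ is a planar path of length at least $\|(m,n)\|_2$, and the rational directions are settled by the equivalence $\St{(m,n)}=\|(m,n)\|_2\iff |m|\rho\le 1$ together with the strict inequality otherwise --- exactly your first two paragraphs. The genuine divergence is the irrational case. The paper does not address it here; read strictly, at this point in the text the corollary is only justified for rational directions, and the exclusion of irrational directions follows only later from Theorem \ref{main thm} (every non-visible direction lies in the interior of a flat of $\Bb$, and the interior of a non-degenerate chord of $\DD$ lies in the open disk). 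Your equidistribution/forced-oscillation argument supplies a direct, self-contained proof of this missing case, and its structure is sound: the reduction to an $x$-monotone piecewise-affine graph with corners at integer abscissae, the covering of the circle by the obstacle arcs $I_k$ over a block of $L$ columns, the resulting bound $\sum_k|s_k|\ge c'q$, and the convexity-defect computation using $\sum_k s_k=0$ fit together (the $\rho/4$ versus $\rho/2$ bookkeeping needs adjusting, and the $x$-monotonicity of a minimizer deserves a line of justification, but these are routine). The one claim I would not let stand is the closing aside: super-additivity of the excess $\St{(q,p)}-\|(q,p)\|_2$ under the recursion $\St{(q,p)}=\St{(b,a)}+\St{(d,c)}$ does not by itself yield linear growth, because the strict-triangle-inequality defects $\|(b,a)\|_2+\|(d,c)\|_2-\|(q,p)\|_2$ at the internal nodes of the Stern--Brocot tree are of order $\bigl(\|(b,a)\|_2\,\|(d,c)\|_2\,\|(q,p)\|_2\bigr)^{-1}$ and their sum converges; the correct shortcut via the main theorem is the flat/chord observation above, not the recursion.
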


 Remark that since $\rho < 1$ the classes of the form $(1,n)$ are always visible, whatever value $\rho$ may take. This makes sense since the segment from $(0,0)$ to $(1,n)$ never crosses the vertical of the slit, so there is no risk of intersecting the slit. \newline 

The unit ball of the stable norm $\Bb$ is a convex set of the plane, symmetric with respect to the origin. We say that $\Bb$ has \emph{a flat} in a direction $(m,n)$ if the unit vector $\frac{(m,n)}{\St{(m,n)}}$ lies in the interior of a segment contained in the boundary $\partial \Bb$ of the unit ball. If $\Bb$ does not have a flat in the direction $(m,n)$ we say that $\Bb$ is strictly convex in the direction $(m,n)$, and the corresponding point on the boundary of $\Bb$ is called an extreme point. Since the unit circle $\SS^1$ is strictly convex in every direction, we get

\begin{cor}
	The unit ball of the stable norm is strictly convex in every visible direction.
\end{cor}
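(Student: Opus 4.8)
The plan is to read this off directly from the preceding corollary, which already gives $\Bb\subset\DD$ and identifies $\partial\Bb\cap\SS^1$ with the visible directions, together with the elementary fact that the Euclidean disk $\DD$ is strictly convex. So the whole argument should be a short convexity observation, with no computation at all.

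Concretely, I would fix a visible direction and pick a primitive visible class $(m,n)$ pointing along it. By visibility (Proposition \ref{visible}) we have $\St{(m,n)}=\|(m,n)\|_2$, so the boundary point $u:=\dfrac{(m,n)}{\St{(m,n)}}$ of $\Bb$ in that direction satisfies $\|u\|_2=1$, i.e.\ $u\in\SS^1=\partial\DD$. Thus $u$ is simultaneously a point of $\partial\Bb$ and of $\partial\DD$.

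Now I would argue by contradiction. Suppose $\Bb$ has a flat in the direction $(m,n)$: then there is a nondegenerate segment $[p,q]\subset\partial\Bb$ with $u$ in its relative interior. Since $\Bb\subset\DD$ we have $p,q\in\DD$, and by convexity of $\DD$ the whole segment $[p,q]$ lies in $\DD$. But then $u\in\SS^1$ lies strictly between the two points $p,q$ of the closed disk, which is impossible: a point of the bounding circle of $\DD$ can never be an interior point of a chord, since $\DD$ is strictly convex. This contradiction shows that no such flat exists, hence $\Bb$ is strictly convex in the direction $(m,n)$, and therefore in every visible direction.

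I do not expect any real obstacle here. The only points deserving a word of care are that the segment appearing in the definition of a flat is nondegenerate (so the contradiction is not vacuous) and that $u$ is required to lie in the \emph{relative interior} of that segment (so it cannot slip to one of its endpoints, which would be consistent with $\DD$ being strictly convex); both are built into the definition of a flat stated just before the corollary.
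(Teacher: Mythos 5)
Your argument is correct and is exactly the paper's proof: the paper deduces the corollary in one line from $\Bb\subset\DD$, the fact that visible directions give boundary points on $\SS^1$, and the strict convexity of the disk. You have simply spelled out the same contradiction in more detail.
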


Now, given a primitive integral homology class $h=(m,n)$ that is not visible, i.e $m\rho> 1$, we know a minimizing representative of $h$ cannot be the projection of the straight line segment from $(0,0)$ to $(m,n)$. To find exactly what a minimizing representative looks like, we will search separately for short simple and non-simple closed curves, as those two cases are very different. Note that when we say simple curves we actually mean \emph{injective} curves, that is to say curves with no self-intersection point, as opposed to the terminology used by some authors that may allow self-intersections as long as they are not transverse.

\section{Shortest non-simple closed curves}

\begin{lemma}
	Let $h\in H_1(X,\ZZ)$. The unit ball of the stable norm $\Bb$ has a flat in the direction $h$ if and only if there exists a minimizing curve for $h$ with at least one self-intersection. Equivalently, the unit ball $\Bb$ is strictly convex in the direction $h$ if and only if every minimizing curve for $h$ is simple.
\end{lemma}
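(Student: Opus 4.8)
This statement is an equivalence, and I would prove the two implications separately; throughout, $h$ is primitive, as in the rest of the section. \emph{From a self-intersecting minimizer to a flat.} Suppose $\gamma$ is a minimizing closed curve for $h$ with $\gamma(s)=\gamma(t)$ for some $s\neq t$; call this common point $x$. Parametrising $\gamma$ by arclength and cutting at $x$ writes $\gamma$ as a concatenation of two loops $\gamma_1,\gamma_2$ based at $x$, each of positive length, with $l(\gamma_1)+l(\gamma_2)=\St{h}$. Set $h_i=[\gamma_i]$. Neither $h_i$ vanishes: if, say, $h_1=0$ then $\St{h}\le\St{h_2}\le l(\gamma_2)<l(\gamma_1)+l(\gamma_2)=l(\gamma)=\St{h}$, absurd (so a minimizer never produces a null-homologous sub-loop). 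Since $h_1+h_2=h$, the triangle inequality gives
$$\St{h}\le\St{h_1}+\St{h_2}\le l(\gamma_1)+l(\gamma_2)=\St{h},$$
so equality holds throughout: $\St{h}=\St{h_1}+\St{h_2}$, and each $\gamma_i$ minimizes $h_i$. Also $h_1$ and $h_2$ are not proportional, for if they were, primitivity of $h$ would force $h_1=\alpha h$, $h_2=\beta h$ with $\alpha,\beta\in\ZZ\setminus\{0\}$ and $\alpha+\beta=1$, whence $\St{h_1}+\St{h_2}=(|\alpha|+|\beta|)\St{h}\ge 2\St{h}>\St{h}$, contradicting the equality just obtained. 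Hence $h_1/\St{h_1}$ and $h_2/\St{h_2}$ are distinct points of $\partial\Bb$, and
$$\frac{h}{\St{h}}=\frac{\St{h_1}}{\St{h}}\cdot\frac{h_1}{\St{h_1}}+\frac{\St{h_2}}{\St{h}}\cdot\frac{h_2}{\St{h_2}}$$
displays $h/\St{h}$ as a non-trivial convex combination of two distinct points of $\Bb$. In $\RR^2$ this means exactly that $h/\St{h}$ is not an extreme point, i.e. that $\Bb$ has a flat in the direction $h$; convexity of $\St{\cdot}$ along the segment joining the two points then shows that segment lies on $\partial\Bb$.

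\emph{From a flat to a self-intersecting minimizer.} Conversely, assume $\Bb$ has a flat at $h$. The plan is to produce a decomposition $h=h_1+h_2$ into integral, non-proportional classes with $\St{h}=\St{h_1}+\St{h_2}$, and then build the curve. Granting the decomposition, choose minimizers $\gamma_1,\gamma_2$ for $h_1,h_2$ passing through the lower endpoint $P$ of the slit --- legitimate by the corollary that minimizers exist, together with the earlier remark that every minimizer is freely homotopic to one through $P$ --- and set $\gamma=\gamma_1\ast\gamma_2$. Then $\gamma$ is a closed curve based at $P$ with $[\gamma]=h$ and $l(\gamma)=\St{h_1}+\St{h_2}=\St{h}$, so it is a minimizer for $h$; and since $h_1,h_2\neq 0$ both $\gamma_i$ are non-constant, so $\gamma$ visits $P$ at its basepoint and again at the interior junction point, i.e. $\gamma$ is a figure-eight at $P$ and is non-simple. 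To obtain the decomposition, take a functional $\omega\in H^1(X,\RR)$ supporting $\Bb$ at $h/\St{h}$, normalised so that $\omega\le\St{\cdot}$ and $\omega(h)=\St{h}$, and form the cone $C=\{v\neq 0:\ \omega(v)=\St{v}\}\cup\{0\}$, which is convex because $\omega(u)=\St{u}$ and $\omega(w)=\St{w}$ force $\omega(u+w)=\St{u}+\St{w}\ge\St{u+w}\ge\omega(u+w)$. Flatness at $h$ means $C$ has non-empty interior and contains $h$ in it, and then any lattice point $h_1$ with $h_1,\,h-h_1\in\mathrm{int}(C)$ does the job: both summands are calibrated by $\omega$, so their stable norms add, and non-proportionality follows as before.

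\emph{Where the difficulty lies.} The one step that does not come for free is the existence of such a lattice point in $\mathrm{int}(C)\cap(h-\mathrm{int}(C))$: a priori a flat face could be so narrow that this region misses $\ZZ^2$, and convexity alone says nothing. What makes it work on $X$ is the explicit geometry of shortest curves: when $\Bb$ is flat at $h=(m,n)$, the analysis of shortest paths in $\tilde X$ carried out in the remainder of Sections~2 and~3 shows that a minimizer lifted to $\tilde X$ is forced to run through a lattice point other than its two endpoints --- a foot of some slit --- and cutting there provides the decomposition, in fact $h=(b,a)+(d,c)$ with $a/b<c/d$ the Farey parents of $n/m$; this also directly exhibits the minimizer as a figure-eight at $P$. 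So I regard this lemma as the conceptual bridge between the ``simple versus non-simple minimizer'' dichotomy and the quantitative computation that follows, with the decomposition step --- equivalently, the statement that a non-simple class forces a minimizer through an intermediate slit foot --- as the part carrying the real weight; the first implication is routine bookkeeping.
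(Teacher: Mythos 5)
Your first implication (a self-intersecting minimizer forces a flat) is the same argument as the paper's ``conversely'' paragraph, and is in fact slightly more complete: the paper checks that the two sub-loops are homologically non-trivial but never rules out that $h_1$ and $h_2$ are proportional, in which case $h_1/\St{h_1}=h_2/\St{h_2}$ and no flat is produced; your primitivity computation closes that loophole cleanly.

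The converse is where the trouble is. Your strategy --- decompose $h=h_1+h_2$ into integral classes calibrated by a common supporting functional, then concatenate minimizers at the lower endpoint of the slit to create a figure-eight --- is exactly the paper's, and you have correctly isolated the one step that does not follow from convexity: the existence of an \emph{integral} decomposition of $h$ itself, i.e.\ a lattice point in $\mathrm{int}(C)\cap\bigl(h-\mathrm{int}(C)\bigr)$. That region is an open convex neighbourhood of the open segment from $0$ to the primitive vector $h$, which contains no lattice points, and the neighbourhood can a priori be arbitrarily thin, so the worry is genuine. But your proposed resolution --- appealing to the analysis of shortest paths ``carried out in the remainder of Sections~2 and~3'' --- is circular: Proposition \ref{non simple} and Propositions \ref{2 parent}, \ref{1 parent}, \ref{0 parent} all invoke this lemma (either to extract a non-simple minimizer from a flat, or in the contrapositive form ``every minimizer simple $\Rightarrow$ strictly convex''), so the lemma must stand on its own before that machinery exists. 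For what it is worth, the paper's own proof dispatches the same step by fiat (``up to shrinking this segment we can assume that $h$ is the middle of the segment and that $h_1$ and $h_2$ are both integral classes''), so you have put your finger on a real soft spot; but flagging it and then citing later sections does not fill it. A self-contained repair has to pass through a multiple of $h$: the open cone $C$ over the flat contains an integral class $w$ not proportional to $h$, and $Nh-w\in C$ for all large $N$, whence $\St{Nh}=\St{w}+\St{Nh-w}$; combined with $f(Nh)=Nf(h)$ from Section~1 this produces a self-intersecting minimizer of $Nh$, after which one must either restate the lemma for $Nh$ or verify that the comparisons in Section~3 (shortest simple representative versus shortest union of two homologically non-trivial loops) go through with $h$ replaced by $Nh$. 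As written, your second implication is not a proof.
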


\begin{proof}
	Let $h$ be a primitive integral homology class, and assume that the unit ball of the stable norm $\Bb$ has a flat in the direction $h$. Then there exist $h_1,h_2 \in H_1(X,\RR)$ such that $\frac{h}{\St{h}}$ lies in the interior of the segment $[\frac{h_1}{\St{h_1}}, \frac{h_2}{\St{h_2}}]$, and up to shrinking this segment we can assume that $h$ is the middle of the segment and that $h_1$ and $h_2$ are both integral classes. Thus, one can get a minimizing (multi)curve for $h$ by taking the union of two curves, representing (and minimizing for) respectively $h_1$ and $h_2$. Let $\gamma$ be such a minimizing multicurve. Assume that $\gamma$ has two connected components $\gamma_1$ and $\gamma_2$, representing $h_1$ and $h_2$ respectively. By translating each component upwards until they encounter the lower endpoint of the slit, we get another representative of $h$ that is also minimizing, because applying a translation does not change the homology class nor the length of each curve. This representative is connected and has at least one self-intersection, located at the lower endpoint of the slit. \\ 
	
	Conversely, if $h$ is minimized by a closed curve $\gamma$ with at least one self-intersection at a point $p\in X$, one can see $\gamma$ as the union of two closed curves $\gamma_1$ and $\gamma_2$ based at $p$. These curves necessarily have non-trivial homology classes $h_1$ and $h_2$: otherwise, by simply deleting one of them from $\gamma$ we would get another closed curve representing $h$ that is strictly shorter than $\gamma$, which contradicts its minimality. Moreover, these curves must be minimizing in their homology class, simply because if one of them is not we could replace it by a minimizing curve of its homology class to get another representative of $h$ shorter than $\gamma$, again contradicting the minimality of $\gamma$. Hence $\St{h}=\St{h_1}+\St{h_2}$ and $h = h_1 + h_2$, so $\mathlarger{\frac{h}{\St{h}}}$ lies in the interior of the segment $\left [\mathlarger{\frac{h_1}{\St{h_1}}, \frac{h_2}{\St{h_2}}}\right ]$ and the unit ball of the stable norm has a flat in the direction $h$.
\end{proof}

This will be useful because, given an integral homology class $h$, using the Farey sequence it is rather easy to find an expression for the shortest non-simple closed curve representing $h$ whose connected components are not homologically trivial. Thus if $\Bb$ has a flat in the direction $h$, since $h$ must be minimized by a non-simple curve, we will get an expression for its stable norm. The idea of the link with the Farey sequence is the following. Given a non-visible homology class $(m,n)$, we want to find the shortest path from the point $(0,0)$ to the point $(m,n)$ in $\tilde{X}$, but since the class is not visible the straight line path is forbidden as it eventually intersects the slit. Hence we have to deviate from this direction at some point. Since the straight line segment would be the shortest path to the point $(m,n)$ if it did not intersect the slit, we want to deviate from it as little as possible in order to minimize the length. In which directions should we deviate? One can see a direction as a slope, and notice that the slope of the line from the origin to the point $(b+d,a+c)$ is the rational $(a+c)/(b+d) = a/b \oplus c/d$, which is, assuming that $a/b$ and $c/d$ are a Farey pair, the Farey child of the slopes of the lines from the origin to the points $(b,a)$ and $(d,c)$. To put it in an informal way, "the sum of two directions is their Farey child"; or more generally "the sum of two directions is their mediant". Thus, if $a/b$ and $c/d$ are the Farey parents of $n/m$, the previous observation suggests to consider the classes $(b,a)$ and $(d,c)$, that correspond to the slopes $a/b$ and $c/d$, as they give the best rational approximations of the direction $(m,n)$.

\begin{prop}\label{non simple}
	Let $h=(m,n)$ be a primitive integral homology class with $m\rho  > 1 $, i.e $h$ is not visible. Let $a/b$ and $c/d$ be the Farey parents of the rational $n/m$. Then the shortest non-simple closed curve $\gamma = \gamma_1 \cup \gamma_2$   representing $h$, with $\gamma_1$ and $\gamma_2$ being two closed curves representing non-trivial homology classes, has length $ \St{(b,a)} + \St{(d,c)} $.
	
	In particular, if the unit ball of the stable norm has a flat in the direction $(m,n)$ then the points $\mathlarger{\frac{(b,a)}{\St{(b,a)}}}$ and $\mathlarger{\frac{(d,c)}{\St{(d,c)}}}$ belong to the same flat (they may be its endpoints).
\end{prop}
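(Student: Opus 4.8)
The plan is to reduce the problem to finding, for each of the two non-trivial homology classes making up $\gamma$, the shortest curve in a \emph{fixed} class, and then to minimize over the possible choices of the pair of classes. The key structural observation is that since $\gamma = \gamma_1 \cup \gamma_2$ with $[\gamma_1] = h_1$, $[\gamma_2] = h_2$ both non-trivial and $h_1 + h_2 = h$, the length of $\gamma$ is at least $\Sts{h_1} + \Sts{h_2}$, with equality achievable (glue minimizing representatives of $h_1$ and $h_2$ at the lower endpoint of the slit, exactly as in the proof of the previous lemma). So the quantity to minimize is $\Sts{h_1} + \Sts{h_2}$ over all decompositions $h = h_1 + h_2$ of $(m,n)$ into non-trivial integral classes. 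We must show this minimum equals $\Sts{(b,a)} + \Sts{(d,c)}$ and is attained at $h_1 = (b,a)$, $h_2 = (d,c)$.

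First I would record that $(b,a) + (d,c) = (b+d, a+c)$, and since $a/b$ and $c/d$ are the Farey parents of $n/m$ we have $n/m = (a+c)/(b+d)$ in lowest terms, hence $(b+d, a+c) = (m,n) = h$ because $h$ is primitive; so $h_1 = (b,a)$, $h_2 = (d,c)$ is indeed a valid decomposition. Next, to bound $\Sts{h_1} + \Sts{h_2}$ from below for an arbitrary decomposition, I would work in the abelian cover $\tilde X$ and use the fact that $\Sts{h_i} \ge \|h_i\|_2$ (Corollary after Proposition \ref{visible}), so $\Sts{h_1}+\Sts{h_2} \ge \|h_1\|_2 + \|h_2\|_2$; by the triangle inequality in the plane this is $\ge \|h\|_2$ with equality only when $h_1, h_2$ are positive multiples of $h$, which is impossible for a genuine decomposition into two non-collinear-to-$h$... — but wait, $h_1$ and $h_2$ \emph{can} be collinear with $h$ only if both are multiples of the primitive class $h$, forcing one of them to be $0$. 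So in fact any non-trivial decomposition has $h_1, h_2$ not collinear, and the two classes lie on opposite sides of the line $\RR h$. The heart of the argument is then a Farey-combinatorial claim: among all pairs of integer vectors $(h_1, h_2)$ with $h_1$ on the left of $\RR h$, $h_2$ on the right, and $h_1 + h_2 = h$, the pair minimizing $\|h_1\|_2 + \|h_2\|_2$ (or more precisely the stable-norm sum) is the Farey-parent pair $((b,a),(d,c))$. I expect this to follow from Proposition 1.5 (the Farey parents are the best rational approximants with smaller denominator) together with the unimodularity relation $bc - ad = 1$ from Proposition 1.3: any other decomposition $h_1 = (b', a')$, $h_2 = (d', c')$ with the correct sign pattern must have $b', d' \ge b$ or $\ge d$ respectively by the approximation property, forcing strictly larger Euclidean length, hence strictly larger stable norm since stable norm dominates Euclidean norm and the straight segments for $(b',a')$, $(d',c')$ already have larger length than those for $(b,a)$, $(d,c)$.

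The main obstacle I anticipate is the last step done \emph{at the level of the stable norm rather than the Euclidean norm}: a priori a longer Euclidean vector could have a smaller stable norm if the shorter one is badly obstructed by the slit. To handle this cleanly I would instead argue directly in $\tilde X$: a non-simple minimizing curve for $h$ lifts to a path from $(0,0)$ to $(m,n)$ that revisits (a translate of) its basepoint, i.e. passes through some lattice point $P = (b', a')$ with $0 \ne P \ne h$; the length of the path is then at least $\tilde f(P) + \tilde f(h - P) = \Sts{(b',a')} + \Sts{(d',c')}$ where $(d',c') = h - P$. Now I split into cases according to whether each of $(b',a')$, $(d',c')$ is visible. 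If both are visible their stable norms are Euclidean and the plane triangle inequality plus the sharp Farey estimate finishes it; the point is that the Farey parents $(b,a)$, $(d,c)$ are precisely the two lattice points closest to the line segment $[(0,0),(m,n)]$ on either side, so their Euclidean contribution is minimal. If one of them, say $(b',a')$, is not visible, then $b' > 1/\rho$, and I would bound $\Sts{(b',a')}$ below by noting its minimizing curve itself splits, recursing; since denominators strictly decrease along Farey ancestry the recursion terminates and bottoms out at visible (hence Euclidean) classes, at which point the comparison with $(b,a)$, $(d,c)$ is again a finite Euclidean-geometry computation using $bc - ad = 1$. The final clause of the proposition — that $\frac{(b,a)}{\Sts{(b,a)}}$ and $\frac{(d,c)}{\Sts{(d,c)}}$ lie on the same flat when $\Bb$ has a flat in direction $h$ — is then immediate: by the previous lemma a flat in direction $h$ means $h$ is minimized by a non-simple curve, whose length by the above is $\Sts{(b,a)} + \Sts{(d,c)}$, so $\frac{h}{\Sts{h}}$ is the midpoint-type combination of $\frac{(b,a)}{\Sts{(b,a)}}$ and $\frac{(d,c)}{\Sts{(d,c)}}$ sitting on the boundary, which forces the whole segment between those two points into $\partial\Bb$ by convexity.
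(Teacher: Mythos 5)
Your reduction --- that the shortest non-simple representative of $h$ has length $\min\left(\St{h_1}+\St{h_2}\right)$ over decompositions $h=h_1+h_2$ into non-trivial integral classes, realized by concatenating minimizers at the slit endpoint --- is exactly the paper's starting point, and you correctly flag the real difficulty: a Euclidean comparison $\|h_1'\|_2+\|h_2'\|_2\ge\|(b,a)\|_2+\|(d,c)\|_2$ does not yield $\St{h_1'}+\St{h_2'}\ge\St{(b,a)}+\St{(d,c)}$, because the right-hand side can strictly exceed its Euclidean counterpart when a Farey parent is not visible. Unfortunately, your proposed repair does not close this gap. In the case where both competitors $(b',a')$ and $(d',c')$ are visible, your chain of inequalities bottoms out at $\|(b,a)\|_2+\|(d,c)\|_2$, which is strictly smaller than the target $\St{(b,a)}+\St{(d,c)}$ whenever a Farey parent is non-visible: take $\rho=1/5$ and $(m,n)=(7,1)$, whose Farey parents are $(1,0)$ and the non-visible $(6,1)$, against the all-visible competitor $(4,0)+(3,1)$; your argument then proves an inequality that is genuinely weaker than the one needed. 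In the case where a competitor is not visible, your recursion rests on the claim that its minimizing curve ``itself splits''; this is false --- Propositions 3.3 and 3.5 of the paper show that a non-visible class with a visible Farey parent is minimized by a \emph{simple} curve --- and in any event those facts are only established in the later section, so they could not be invoked here without circularity.

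The paper's proof avoids all of this with a single convexity argument that never needs the values of the stable norm. For each admissible $(p,q)$ it places the two points $(p,q)/\St{(p,q)}$ and $(m-p,n-q)/\St{(m-p,n-q)}$ on $\partial\Bb$ and records where the chord joining them meets the ray through $(m,n)$; minimizing $\St{(p,q)}+\St{(m-p,n-q)}$ is equivalent to pushing this intersection point as far from the origin as possible, and by convexity of $\Bb$ that happens exactly when the slopes $q/p$ and $(n-q)/(m-p)$ are the closest rationals to $n/m$ from below and above with denominator smaller than $m$ --- which is precisely the characterization of the Farey parents. If you want to salvage your line of attack, this is the step to borrow: replace the Euclidean triangle inequality and the recursion by the observation that the optimization is monotone in the slope once the candidate points are known to lie on the boundary of a single convex body.
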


\begin{proof}
	
	Let $\gamma$ be a non-simple closed curve representing $h=(m,n)$. Since $\gamma$ has a self-intersection we can see $\gamma$ as the union of two closed curves $\gamma_1$ and $\gamma_2$ representing two integral homology classes, respectively $(p,q)$ and $(m-p, n-q)$ with $p,q\in \ZZ$, and we assume that $p$ and $q$ are not both zero.  Recall that $\gamma$ lifts to a path from $(0,0)$ to $(m,n)$ in the abelian covering $\tilde{X}$ of the slit torus, which is the concatenation of a path from $(0,0)$ to $(p,q)$ and of another path from $(p,q)$ to $(m,n)$. Since we are looking for \emph{short} curves representing $h=(m,n)$, we can assume $0<p<m$, $0<q<n$. Indeed, if it is not the case, then $p <0$ or $q<0$ and one the the paths composing the lift of $\gamma$ goes backwards at some point. This is not optimal as it wastes length, so it cannot be a \emph{short} non-simple closed curve representing $h$, let alone the shortest one.
	
	Now, if the homology classes of $\gamma_1$ and $\gamma_2$ are fixed (i.e if $(p,q)$ is fixed), the length of $\gamma$ is minimal if both the lengths of these two curves are minimal in their homology class, so we get the following inequality:
	
	$$l(\gamma) \ge \St{(p,q)}+\St{(m-p,n-q)} $$
	
	with equality if and only if $\gamma$ is the union of two minimizing curves for $(p,q)$ and $(m-p,n-q)$ respectively. Hence, the minimal length of a non-simple closed curve representing $h$ is
	
	$$ \min\limits_{(p,q)} \St{(p,q)}+\St{(m-p,n-q)} $$
	
	with $0<p<m$ and $0<q<n$. Let $C$ be the convex hull of $\left \{ \frac{(p,q)}{\St{(p,q)}}, 0<p<m \text{ and } 0<q<n \right \}$. Since $\Bb$ is convex, it contains the convex hull of any of its subsets, so $C\subset \Bb$. Thus, since $(m,n)/\St{(m,n)} \in \partial\Bb$, either $(m,n)/\St{(m,n)}$ belongs to $\partial C$ or it lies outside of $C$. For $(p,q)\in\ZZ^2$ with $p<m$ and $q<n$, denote $z_{p,q}$ the intersection point of the segment of endpoints  $(p,q)/ \St{(p,q)}$ and $(m-p,n-q)/ \St{(m-p,n-q)}$ and of the line $\Delta$ from the origin passing through $(m,n)$ (i.e the line of slope $n/m$). See Figure \ref{env} below. We get a set 
	
	$$E = \left \{ z_{p,q}, p<m, q<n \right \}$$
	
	of points of $\Delta$, that we order in the following way. We say that $z_{p,q} \preceq z_{p',q'}$ if $z_{p,q}$ is on the left of $z_{p',q'}$ on $\Delta$, when $\Delta$ is oriented positively from $(0,0)$ to $(m,n)$. By construction, 
	
	$$ z_{p,q} \preceq z_{p',q'} \iff \St{(p',q')}+\St{(m-p',n-q')} \le \St{(p,q)}+\St{(m-p,n-q)}   $$
	
	Thus finding $(p,q)$ such that $\St{(p,q)}+\St{(m-p,n-q)}$ is minimal is equivalent to finding $(p,q)$ such that $z_{p,q}$ is maximal for $\preceq$, that is the rightmost point of $E$. By definition of $C$, this is equivalent to finding $(p,q)$ such that $z_{p,q} \in \partial C$. Up to exchanging $(p,q)$ and $(m-p,n-q)$ we can assume $q/p < n/m$ (thus $ n-q/m-p > n/m$) and by convexity of $C$ we finally have:
	$$z_{p,q} \preceq z_{p',q'} \iff \frac{q}{p} \le  \frac{q'}{p'} $$
	This means that we have to find the greatest rational $q/p < n/m$ with $p<m$. We recognize the characterization of a Farey parent of $n/m$, hence $(p,q)=(b,a)$ and $(m-p,n-q)=(d,c)$. We have shown:
	$$ \min\limits_{(p,q)} \St{(p,q)}+\St{(m-p,n-q)} = \St{(b,a)} + \St{(d,c)} $$
	
	\begin{center}\label{env}
	\includegraphics[scale=0.6]{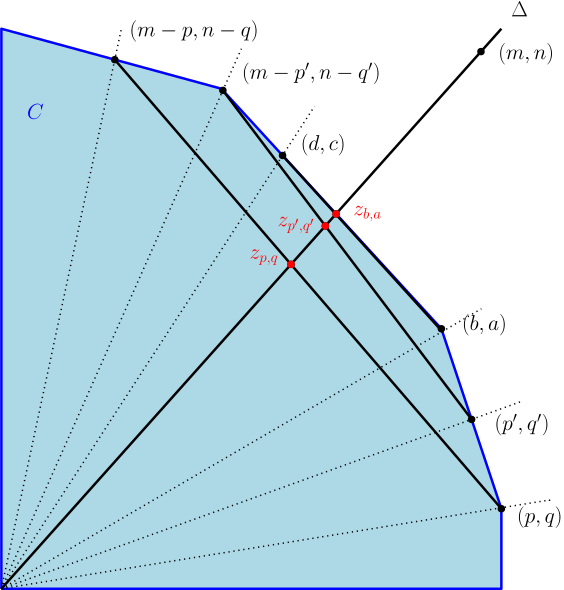}
	\captionof{figure}{The construction of the proof}
\end{center}

\end{proof}

\section{Looking for short simple closed curves}

Once again, let $h=(m,n)$ be a primitive integral homology class with $m\rho > 1$, i.e the class $h$ is not visible. We now want to know what is the shortest length of a simple curve representing $h$, in order to compare it to the length of the shortest non-simple closed curve that we found in the previous section.

We use the following result from classical algebra, which is a corollary of an old theorem by Nielsen, proven by Osborne and Zieschang as Corollary 3.2 in \cite{OZ}.

\begin{thm} Let $\psi: F_2 \longrightarrow \ZZ^2$ be the canonical abelianizing homomorphism, and let $p_1,p_2$ be two primitive elements of $F_2$. If $\psi(p_1)=\psi(p_2)$ in $\ZZ^2$ then $p_1$ and $p_2$ are conjugate in $F_2$.
\end{thm}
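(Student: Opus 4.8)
The plan is to reduce the statement, via Nielsen's description of $\mathrm{Aut}(F_2)$, to a single easy computation inside $F_2$. Since $p_1$ and $p_2$ are primitive, each belongs to a free basis, so there are automorphisms $\phi_1,\phi_2\in\mathrm{Aut}(F_2)$ with $\phi_i(s)=p_i$. Put $\theta:=\phi_2^{-1}\phi_1\in\mathrm{Aut}(F_2)$. First I would observe that it is enough to prove that $\theta(s)$ is conjugate to $s$: if $\theta(s)=wsw^{-1}$, then $p_1=\phi_1(s)=\phi_2(\theta(s))=\phi_2(w)\,p_2\,\phi_2(w)^{-1}$, so $p_1$ and $p_2$ are conjugate in $F_2$. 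Thus the whole problem is transferred to understanding the single automorphism $\theta$.

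Next I would locate $\theta$ via abelianization. Identify $\mathrm{Aut}(\ZZ^2)$ with $\mathrm{GL}_2(\ZZ)$ acting on column vectors, with $\psi(s)=(1,0)^{\top}$ and $\psi(t)=(0,1)^{\top}$, and let $A_1,A_2,B$ be the images of $\phi_1,\phi_2,\theta$, so that $B=A_2^{-1}A_1$. The hypothesis $\psi(p_1)=\psi(p_2)$ says $A_1(1,0)^{\top}=A_2(1,0)^{\top}$, hence $B$ fixes $(1,0)^{\top}$. Therefore $B$ lies in the stabiliser $\mathrm{Stab}_{\mathrm{GL}_2(\ZZ)}\big((1,0)^{\top}\big)=\{\,\left(\begin{smallmatrix}1&k\\0&\varepsilon\end{smallmatrix}\right):k\in\ZZ,\ \varepsilon=\pm1\,\}$, which is generated by $\left(\begin{smallmatrix}1&1\\0&1\end{smallmatrix}\right)$ and $\left(\begin{smallmatrix}1&0\\0&-1\end{smallmatrix}\right)$.

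Then I would bring in Nielsen's theorem that $\mathrm{Aut}(F_2)\to\mathrm{GL}_2(\ZZ)$ is surjective with kernel exactly $\mathrm{Inn}(F_2)$. The two generating matrices above lift to the explicit automorphisms $\alpha\colon s\mapsto s,\ t\mapsto st$ and $\beta\colon s\mapsto s,\ t\mapsto t^{-1}$, while $\mathrm{Inn}(F_2)=\langle c_s,c_t\rangle$ with $c_x\colon y\mapsto xyx^{-1}$. Hence the preimage of $\mathrm{Stab}_{\mathrm{GL}_2(\ZZ)}\big((1,0)^{\top}\big)$ in $\mathrm{Aut}(F_2)$, which contains $\theta$, is generated by $\alpha,\beta,c_s,c_t$. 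Each of these four automorphisms sends $s$ to a conjugate of $s$ — indeed $\alpha(s)=\beta(s)=c_s(s)=s$ and $c_t(s)=tst^{-1}$ — and the set of automorphisms of $F_2$ that carry $s$ into its own conjugacy class is visibly closed under composition and inversion, hence a subgroup of $\mathrm{Aut}(F_2)$. Consequently $\theta$ belongs to it, so $\theta(s)$ is conjugate to $s$, and the reduction in the first paragraph concludes the proof.

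The only substantial input is Nielsen's theorem $\mathrm{Out}(F_2)\cong\mathrm{GL}_2(\ZZ)$; the rest is bookkeeping with explicit generators. If one insisted on a self-contained argument, the real obstacle would be showing that the kernel of $\mathrm{Aut}(F_2)\to\mathrm{GL}_2(\ZZ)$ is no larger than $\mathrm{Inn}(F_2)$ (surjectivity and the stabiliser description being elementary), which is precisely the point where Nielsen's peak reduction — equivalently Whitehead's algorithm in rank two — is invoked.
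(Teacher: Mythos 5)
Your argument is correct. Note first that the paper itself offers no proof of this statement: it is quoted as Corollary 3.2 of Osborne--Zieschang, which in turn rests on Nielsen's work, so there is no ``paper proof'' to match step for step. Your derivation is a clean and standard alternative: the reduction to showing that $\theta=\phi_2^{-1}\phi_1$ carries $s$ into its own conjugacy class is sound, the computation of $\mathrm{Stab}_{\mathrm{GL}_2(\ZZ)}\bigl((1,0)^{\top}\bigr)$ and its two generators is right, the lifts $\alpha,\beta$ and the generators $c_s,c_t$ of $\mathrm{Inn}(F_2)$ do generate the full preimage of the stabiliser (kernel plus lifts of generators always generate a preimage), and the set of automorphisms sending $s$ into its conjugacy class is indeed a subgroup, so membership of $\theta$ follows from membership of the four generators. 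You are also right to flag that the entire weight of the argument sits on Nielsen's theorem that the kernel of $\mathrm{Aut}(F_2)\to\mathrm{GL}_2(\ZZ)$ is exactly $\mathrm{Inn}(F_2)$; that is precisely the nonelementary input that Osborne--Zieschang's treatment packages differently (via their combinatorial normal form for primitive words, which the paper later uses as the word $V_{m,n}$). What your route buys is a short conceptual proof from a single quotable classical theorem; what the Osborne--Zieschang route buys is the explicit representative $V_{m,n}$ of the conjugacy class, which the paper actually needs in Section 3 and which your argument does not produce.
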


Recall that an element $w\in F_2$ is primitive if there exists $w'\in F_2$ such that $\langle w,w' \rangle = F_2$. Equivalently, given a system of generators $(s,t)$ of $F_2$, an element $w$ is primitive if and only if there exists $\phi \in Aut(F_2)$ such that $\phi(s)=w$. Thus on the slit torus, where for all $ x \in X, \pi_1(X,x) \cong F_2$ and $ H_1(X,\ZZ)\cong\ZZ^2$, up to conjugacy there is a one-to-one correspondance between primitive homotopy classes and integral homology classes. Since the conjugacy homotopy classes  can be identified with free homotopy classes, this means that given an integral homology class $h$ on $X$, there exists a unique free homotopy class of simple closed curves that projects onto $h$ through the abelianizing homomorphism.

\begin{lemma}
	A free homotopy class $[\gamma]$ on $X$ is primitive if and only if there exists a representative $\gamma$ that is simple.
\end{lemma}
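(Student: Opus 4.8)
The plan is to combine the characterization of primitivity recalled above --- $w\in F_2$ is primitive iff $w=\phi(s)$ for some $\phi\in\mathrm{Aut}(F_2)$, where $\{s,t\}$ is a fixed free basis of $\pi_1(X)\cong F_2$ --- with the fact that, for the once-holed torus, all automorphisms of $F_2$ are geometric. Throughout I identify free homotopy classes of loops on $X$ with conjugacy classes in $F_2$; since a conjugate of a primitive element is again primitive, the notion of a primitive free homotopy class makes sense. It suffices to treat classes that are essential and not freely homotopic into $\partial X$, which is the case for every homology class relevant to this paper.

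For the ``only if'' direction, fix once and for all a simple closed curve $\gamma_0$ on $X$ representing $s$ (for instance a horizontal core loop of the polygonal model of $X$, which misses the slit). Given a primitive class $[\gamma]$, pick a representative $w=\phi(s)$ with $\phi\in\mathrm{Aut}(F_2)$. Now $\mathrm{Aut}(F_2)$ is generated by elementary Nielsen transformations, and each of these is induced by a homeomorphism of $X$ (the permutation $s\leftrightarrow t$ and the inversion $s\mapsto s^{-1}$ come from symmetries of the square torus, the transvection $s\mapsto st,\ t\mapsto t$ from a Dehn twist along the simple closed curve representing $t$); equivalently, the natural map from the extended mapping class group of $X$ to $\mathrm{Out}(F_2)\cong GL_2(\ZZ)$ is surjective. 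Hence $\phi$ agrees, up to an inner automorphism, with $f_\ast$ for some homeomorphism $f\colon X\to X$, and then $f(\gamma_0)$ is a simple closed curve whose free homotopy class is the conjugacy class of $\phi(s)=w$, namely $[\gamma]$.

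For the ``if'' direction --- the real content of the lemma --- suppose $[\gamma]$ is represented by a simple closed curve $\gamma$ that is essential and not boundary-parallel. On $X\cong\Sigma_{1,1}$ such a curve is necessarily non-separating: an Euler-characteristic count shows that a separating essential simple closed curve cuts off a disk or an annulus, so it would be null-homotopic or boundary-parallel. Cutting $X$ along $\gamma$ thus yields a connected surface of Euler characteristic $-1$ with three boundary circles, i.e. a pair of pants $P$. Join the two copies of $\gamma$ in $\partial P$ by an embedded arc whose endpoints are identified under the regluing; it closes up to a simple closed curve $\delta$ on $X$ meeting $\gamma$ transversally in a single point. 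Let $N$ be a regular neighbourhood in $X$ of the graph $\gamma\cup\delta$. Because $\gamma$ and $\delta$ cross exactly once, $N$ is a compact orientable surface with $\chi(N)=-1$ and one boundary component, hence a copy of $\Sigma_{1,1}$; therefore $X$ minus the interior of $N$ is an orientable surface with Euler characteristic $0$ one of whose boundary circles is $\partial X$, hence an annulus, so $X$ deformation retracts onto $N$ and $N\hookrightarrow X$ is a homotopy equivalence. Consequently $\langle[\gamma],[\delta]\rangle=\pi_1(N)\xrightarrow{\sim}\pi_1(X)=F_2$, and a two-element generating set of $F_2$ is automatically a free basis (the induced epimorphism $F_2\to F_2$ is an isomorphism, as $F_2$ is Hopfian). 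Hence $[\gamma]$ belongs to a free basis of $F_2$, i.e. it is primitive.

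The delicate point is this second direction, and what makes it non-trivial is that it cannot be established homologically: being a primitive element of $H_1(X,\ZZ)\cong\ZZ^2$ is strictly weaker than being a primitive element of $F_2$ --- for instance $s[s,t]$ has abelianization the primitive vector $(1,0)$ but is not primitive in $F_2$, since by the theorem of Osborne--Zieschang \cite{OZ} a primitive element with that abelianization is conjugate to $s$, which $s[s,t]$ is not. So one really has to use that $\gamma$ is simple, which is done above via the existence of a simple dual curve $\delta$ and the identification of a regular neighbourhood of $\gamma\cup\delta$ with all of $X$. Once the ``only if'' direction is in hand, an alternative way to conclude is the change-of-coordinates principle: the mapping class group of $\Sigma_{1,1}$ acts transitively on non-separating simple closed curves, so any such $\gamma$ is the image of $\gamma_0$ under a homeomorphism and is therefore primitive. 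The equivalence is classical and goes back to Nielsen.
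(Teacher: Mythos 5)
Your proof is correct, and for the substantive direction it takes a genuinely different route from the paper. The paper handles both implications with a single mechanism: the change-of-coordinates principle (classification of non-separating simple closed curves, Farb--Margalit) gives a homeomorphism carrying a fixed simple curve $a$ representing $s$ to $\gamma$, whence primitivity of $[\gamma]$; conversely a primitive class is $\varphi(s)$ for some $\varphi\in\mathrm{Aut}(F_2)$, which is realized by a self-map of $X$ because $X$ is a $K(F_2,1)$, and the image of $a$ is the desired simple representative. You instead prove the ``if'' direction from scratch: you rule out separating curves by an Euler-characteristic count, construct a dual curve $\delta$ meeting $\gamma$ once, and show that a regular neighbourhood of $\gamma\cup\delta$ is a once-holed torus with annular complement, so that $\{[\gamma],[\delta]\}$ is a free basis of $\pi_1(X)$; this makes the lemma self-contained and in effect reproves the genus-one case of the classification the paper cites. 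For the ``only if'' direction you realize Nielsen generators of $\mathrm{Aut}(F_2)$ by explicit homeomorphisms (symmetries of the square and a Dehn twist); this is more concrete than the paper's appeal to Hatcher's Proposition 1B.9, and arguably tighter, since that proposition only furnishes a continuous map inducing $\varphi$, not a homeomorphism, so the paper leaves a small upgrade implicit. You are also right, and more careful than the paper, to exclude null-homotopic and boundary-parallel curves at the outset: as literally stated the lemma fails for the simple boundary curve, whose class is the non-primitive commutator, and the restriction is harmless since only non-separating curves occur in the sequel.
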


\begin{proof}
	Denote $(s,t)$ a basis of $F_2\cong \pi_1(X)$ and let $a$ be a simple closed curve representing $s$.
	If $\gamma$ is a simple closed curve then by the theorem of classification of (non-separating) simple closed curves on surfaces (see \cite{Farb-Marg}, paragraph 1.3.1) there exists a homeomorphism $\Phi$ of $X$ such that $\Phi(a) = \gamma$. This homeomorphism induces an automorphism $\varphi$ of $\pi_1(X)$ such that $\varphi(s) = [\gamma]$, hence $[\gamma]$ is primitive. Conversely, if $[\gamma]$ is primitive there exists $\varphi\in Aut(\pi_1(X))$ such that $\varphi(s) = [\gamma]$. Since $X$ is a $K(F_2,1)$, by Proposition 1B.9 of \cite{Hatcher} $\varphi$ is induced by a homeomorphism $\Phi$ of $X$. Then, since $a$ is simple and $\Phi$ is a homeomorphism, $\Phi(a)$ is a simple representative of $[\gamma]$.
\end{proof}

Thus, when looking for simple curves representing a given integral primitive homology class $h=(m,n)$, we only need to look inside of a single free homotopy class. Osborne and Zieschang also provide a geometric construction for this class. Let $(s,t)$ be a basis of $F_2$. On a square grid, draw the open segment from $(0,0)$ to $(m,n)$. Following this segment from the origin to $(m,n)$, write $s$ (resp. $t$) whenever it crosses a vertical (resp. horizontal) line of the grid. We obtain a word $V_{m,n}'$ in the two letters $s$ and $t$.  Note that by taking an \emph{open} segment we do not count the intersections with the lines of the grid at the points $(0,0)$ and $(m,n)$ when writing this word.

\begin{thm}[Proposition 2.3 in \cite{OZ}]
	The word $V_{m,n} := stV_{m,n}'$ is primitive in $F_2$, and it is a representative of the unique conjugacy class that projects onto $(m,n)$ in $\ZZ^2$.
\end{thm}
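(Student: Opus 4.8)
The plan is to split the statement into its two assertions: that $\psi(V_{m,n})=(m,n)$, which is elementary, and that $V_{m,n}$ is primitive in $F_2$, which is the real content. Once both hold, the claim that $V_{m,n}$ represents the \emph{unique} conjugacy class projecting onto $(m,n)$ follows immediately from the theorem recalled above (the corollary of Nielsen's theorem), together with the remark made just before the statement that primitive free homotopy classes are in bijection, up to conjugacy, with integral homology classes. I would assume throughout that $m,n\ge 1$ and $\gcd(m,n)=1$; the cases with a coordinate $\le 0$ reduce to this by the evident symmetries of the construction. The abelianization is a counting matter: since $\gcd(m,n)=1$, the \emph{open} segment from $(0,0)$ to $(m,n)$ meets no lattice point, so it crosses exactly the interior vertical lines $x=1,\dots,m-1$ and the interior horizontal lines $y=1,\dots,n-1$, at $m-1$ and $n-1$ distinct parameter values; hence $V'_{m,n}$ is a positive word with $m-1$ letters $s$ and $n-1$ letters $t$, and $\psi(V_{m,n})=\psi(st)+\psi(V'_{m,n})=(1,1)+(m-1,n-1)=(m,n)$.

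For primitivity I would pass to the lower Christoffel word $C(m,n)$ of slope $n/m$ --- the cutting sequence of the segment $[(0,0),(m,n)]$ read with the usual half-open convention, completed by the degenerate words $C(1,0)=s$ and $C(0,1)=t$ --- and first observe that $V_{m,n}$ and $C(m,n)$ are cyclic permutations of one another (both code the closed geodesic of slope $n/m$ on the torus, read from different basepoints; prepending $st$ to the open cutting sequence merely reinserts the two grid crossings occurring at $(0,0)$). Since primitivity is invariant under conjugation, hence under cyclic permutation, it suffices to show $C(m,n)$ is primitive. I would prove, by induction on depth in the Stern--Brocot tree, the stronger statement that for every Farey pair $a/b<c/d$ in the positive cone (so $a,b,c,d\ge 0$, $bc-ad=1$) the pair $(C(b,a),C(d,c))$ is a free basis of $F_2$; the base case is the root pair $0/1<1/0$, where $(C(1,0),C(0,1))=(s,t)$. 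For the inductive step, the mediant of $a/b$ and $c/d$ is $p/q=(a+c)/(b+d)$, and the classical standard factorization of Christoffel words gives $C(q,p)=C(b,a)\,C(d,c)$; writing $(u,v)=(C(b,a),C(d,c))$, the factor pairs attached to the two Farey pairs one level below, $a/b<p/q$ and $p/q<c/d$, are $(u,uv)$ and $(uv,v)$, each obtained from $(u,v)$ by an automorphism of the free group on $\{u,v\}$ and therefore again a free basis of $F_2$. Applying this to the Farey parents $a/b<c/d$ of $n/m$ gives $C(m,n)=uv$ with $\{u,v\}$ a basis; since $(u,v)\mapsto(uv,v)$ is an automorphism, $\{C(m,n),v\}$ is a basis, so $C(m,n)$, and hence $V_{m,n}$, is primitive. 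Combined with $\psi(V_{m,n})=(m,n)$ and Nielsen's theorem, this completes the argument.

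The step I expect to be the real work is the Christoffel factorization $C(m,n)=C(b,a)\,C(d,c)$ --- equivalently, the self-similarity of cutting sequences under Farey descent. The argument is a lattice computation: the parallelogram spanned by $(b,a)$ and $(d,c)$ has area $bc-ad=1$, hence contains no lattice point in its interior, so the segment to $(m,n)=(b,a)+(d,c)$ decomposes into two sub-arcs that are integer translates of the segments to $(b,a)$ and to $(d,c)$; one then checks that reading the crossings along this decomposition, in the correct order, reproduces exactly $C(b,a)$ followed by $C(d,c)$, with the two half-open endpoint conventions matching up --- and it is precisely the unimodularity $bc-ad=1$ that makes this bookkeeping close. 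Everything else (the crossing count, the fact that the elementary substitutions $(u,v)\mapsto(u,uv)$ and $(u,v)\mapsto(uv,v)$ extend to automorphisms, the identification of $V_{m,n}$ with $C(m,n)$ up to a rotation, and the final appeal to Nielsen's theorem) is routine, so this lattice lemma is the crux. An essentially equivalent route would be to run the Euclidean algorithm on $(m,n)$ and realize the required automorphism as a composition of elementary Nielsen transformations $s\mapsto st^{\pm1}$, $t\mapsto t$ together with the swap of $s$ and $t$, tracking the effect of each on the cutting sequence; the difficulty is the same.
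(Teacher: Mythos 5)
This statement is quoted in the paper as Proposition 2.3 of Osborne--Zieschang and is given no proof there, so there is nothing internal to compare against; your proposal is an independent reconstruction, and it is correct. The abelianization count is right (for $\gcd(m,n)=1$ the open segment misses the lattice, so $V'_{m,n}$ has exactly $m-1$ letters $s$ and $n-1$ letters $t$), and the primitivity argument --- Stern--Brocot induction showing that each Farey pair $a/b<c/d$ carries a free basis $(C(b,a),C(d,c))$, with the two descendants obtained by the Nielsen moves $(u,v)\mapsto(u,uv)$ and $(u,v)\mapsto(uv,v)$ --- is sound and is in spirit the same induction Osborne and Zieschang run, except that they work directly with the words $V_{m,n}$ and the recursion $V_{m,n}=V_{d,c}V_{b,a}$ (the paper's Theorem 3.5) rather than detouring through Christoffel words. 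That detour costs you one extra verification, which you correctly flag but treat a little quickly: the cyclic equivalence of $V_{m,n}=stV'_{m,n}$ with the lower Christoffel word $C(m,n)$ is not automatic from ``both code the same closed geodesic,'' because a cutting sequence through a lattice point has an ordering ambiguity at that point and different resolutions can give cyclically inequivalent words in general; here it works (e.g.\ $V_{3,2}=ststs$ is a rotation of $C(3,2)=sstst$), but it deserves the same explicit lattice bookkeeping you reserve for the standard factorization. Note also that your factorization lemma $C(m,n)=C(b,a)C(d,c)$ is, after rotating back, precisely the other Osborne--Zieschang result the paper cites (their Lemma 2.2), so your argument in fact proves both quoted statements at once; working with the $V$-words directly, as in the original, would avoid the cyclic-conjugation issue entirely at no extra cost.
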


Osborne and Zieschang also prove a remarkable property of this word $V_{m,n}$ that we rephrase here using the Farey sequence:

\begin{thm}[Lemma 2.2 in  \cite{OZ}]\label{mot}
	Let $a/b$ and $c/d$ be the Farey parents of $n/m$, with $a/b<c/d$. Then $$ V_{m,n} = V_{d,c}V_{b,a} = V_{b,a}tsV_{d,c}'$$
\end{thm}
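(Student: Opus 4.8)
The plan is to prove both identities directly from the cutting-sequence description of $V_{m,n}$: one locates the two lattice points that the straight segment from $(0,0)$ to $(m,n)$ just barely misses, and cuts the segment there. By the symmetry of the construction under reflections of the integer grid we may assume $m,n\geq 1$; the genuinely degenerate cases, where a Farey parent is $0/1$ or $1/0$ (that is, $n/m$ is $1/k$ or a positive integer) and the case $(m,n)=(1,1)$, force the conventions $V_{1,0}=s$ and $V_{0,1}=t$ and will be checked by hand — the identities then holding only up to cyclic conjugacy, which is all that matters afterwards. So I carry out the main argument for $m,n\geq 2$, when $a,b,c,d\geq 1$, $m=b+d$, $n=a+c$ and $bc-ad=1$. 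Parametrise the open segment by $\sigma(u)=(um,un)$, $u\in(0,1)$: it meets $x=k$ at $u=k/m$ for $1\le k\le m-1$ and $y=j$ at $u=j/n$ for $1\le j\le n-1$, all these parameters distinct since $\gcd(m,n)=1$, and $V_{m,n}'$ is the word in $s$ (vertical crossing) and $t$ (horizontal crossing) read off in increasing order of $u$.

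The key observation — the only point where $bc-ad=1$ enters — is that $\sigma$ skirts the lattice point $(b,a)$: since $bn-am=1$, the segment crosses $y=a$ at $u=a/n$, at the point $(b-\tfrac1n,\,a)$, then crosses $x=b$ at $u=b/m$, at $(b,\,a+\tfrac1m)$, and it meets no grid line for $u\in(a/n,b/m)$; symmetrically, since $cm-dn=1$, it crosses $x=d$ at $u=d/m$ at $(d,\,c-\tfrac1m)$, then $y=c$ at $u=c/n$ at $(d+\tfrac1n,\,c)$, with nothing in between. I would then cut $\sigma$ at the parameters $a/n<b/m$: a routine count gives that the initial arc $\sigma|_{(0,\,a/n)}$ meets exactly $x=1,\dots,b-1$ and $y=1,\dots,a-1$, the two middle crossings contribute $t$ then $s$, and the final arc $\sigma|_{(b/m,\,1)}$ meets exactly $x=b+1,\dots,b+d-1$ and $y=a+1,\dots,a+c-1$; hence $V_{m,n}'=W_1\,t\,s\,W_2$. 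It remains to identify $W_1$ with $V_{b,a}'$ and $W_2$ with $V_{d,c}'$. For $W_1$, observe it is the cutting sequence of a segment from the origin having exactly the grid-line crossings of $(0,0)\to(b,a)$ and differing from that segment only by displacing the endpoint $\tfrac1n$ along $y=a$; on either segment the relative order of the crossings of $x=k$ and $y=j$ is governed by the sign of $ak-bj$, a nonzero integer since $\gcd(a,b)=1$ and $0<k<b$, and the displacement perturbs the deciding quantity by less than $1$, hence cannot flip its sign — so $W_1=V_{b,a}'$. Translating by $(b,a)$, the same argument gives $W_2=V_{d,c}'$. Therefore $V_{m,n}'=V_{b,a}'\,t\,s\,V_{d,c}'$, and since $V_{b,a}=st\,V_{b,a}'$ we get $V_{m,n}=st\,V_{m,n}'=(st\,V_{b,a}')\,t\,s\,V_{d,c}'=V_{b,a}\,t\,s\,V_{d,c}'$, the second identity. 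Running the identical argument with the cut at $d/m<c/n$ — where $\sigma$ skirts $(d,c)$, passing just below it and then just to its right — gives $V_{m,n}'=V_{d,c}'\,s\,t\,V_{b,a}'$, whence $V_{m,n}=st\,V_{d,c}'\,st\,V_{b,a}'=V_{d,c}\,V_{b,a}$, the first identity.

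\textbf{The main obstacle} I anticipate is not the geometry — each pivot computation is a one-line consequence of $bc-ad=1$ — but the bookkeeping in this perturbation step: for each of the four arcs one must verify that moving an endpoint by less than a unit never reverses the order of two consecutive crossings, which comes down to short sign estimates of the shape "a nonzero integer perturbed by an amount of absolute value less than $1$"; and, separately, patching the degenerate cases ($a=0$, or $(m,n)$ one of $(1,0),(0,1),(1,1)$), where even $V_{m,n}=st\,V_{m,n}'$ fails and the statement survives only up to conjugacy.
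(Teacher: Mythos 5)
The paper offers no proof of this statement at all: it is imported as Lemma~2.2 of Osborne--Zieschang \cite{OZ} and simply rephrased in Farey language, so your self-contained cutting-sequence argument is necessarily a different route, and it is correct. The two pivot identities $bn-am=1$ and $cm-dn=1$ do exactly what you claim: they place the crossings of $y=a$ and $x=b$ at consecutive parameters $a/n<b/m$ with gap $1/(mn)$ (dually $d/m<c/n$), and a one-line check rules out any $k/m$ or $j/n$ strictly in between. The perturbation step is also sound as sketched: for $1\le k\le b-1$, $1\le j\le a-1$ the deciding quantity $ka-jb$ is a nonzero integer because $\gcd(a,b)=1$, and displacing the endpoint only moves the comparison threshold from $0$ to $-j/n\in(-1,0)$, which cannot cross an integer; the analogous threshold $(k-d)/m\in(-1,0)$ handles the translated final arc. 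The degenerate cases you flag are real but harmless: with $V_{1,0}=s$ and $V_{0,1}=t$ one checks directly, e.g., $V_{2,1}=sts=V_{1,1}V_{1,0}=V_{1,0}\,ts\,V_{1,1}'$, and the paper only ever uses the lemma through the free-homotopy class of $V_{m,n}$, so conjugacy would suffice anyway. What your approach buys is a proof living entirely in the geometric picture the paper actually exploits afterwards --- the lifted grid where a simple representative is forced through the windows at $x=b$ and $x=d$ --- rather than an appeal to the algebra of primitive elements of $F_2$; the cost is the endpoint-perturbation bookkeeping, which you have correctly identified and which does go through.
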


There is a lot of information to be extracted from this theorem. The first equality $V_{m,n} = V_{d,c}V_{b,a}$ tells us that the lift of a simple curve representing $(m,n)$ on $X$ must cross the vertical line $x=d$ between the points $(d,c-1+\rho)$ and $(d,c)$, as illustrated by the following picture:

\begin{center}
	\includegraphics[scale=0.5]{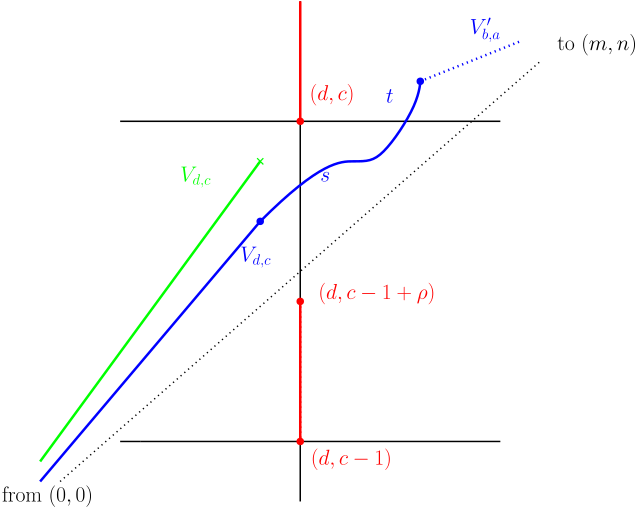}
 \captionof{figure}{First constraint for a curve in $V_{m,n}$}
	\label{contrainte 1}
\end{center}

Similarly, the second equality $V_{m,n} = V_{b,a}tsV_{d,c}'$ tells us that the lift of a simple curve representing $(m,n)$ on $X$ must cross the vertical line $x=b$ between the points $(b,a+\rho)$ and $(b,a+1)$:

\begin{center}
	\includegraphics[scale=0.45]{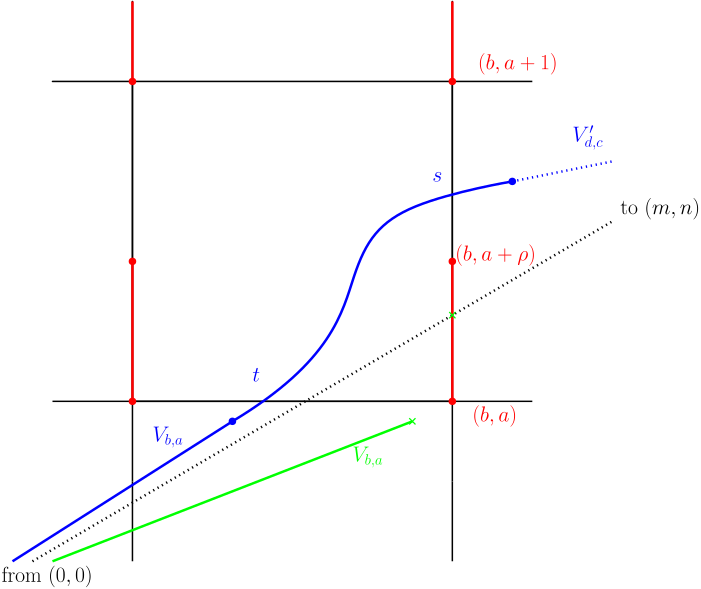}
	\captionof{figure}{Second constraint for a curve in $V_{m,n}$}
	\label{fig:contrainte 2}

\end{center}

Let us start by considering this second constraint on simple curves, illustrated by Figure \ref{fig:contrainte 2}; as we will see, it already gives a lot of information. Satisfying this constraint is a necessary condition for a curve to be simple. Among all the curves (simple or not) satisfying this constraint, the obvious candidate to be the shortest is the union $\gamma$ of the segments $[(0,0), (b,a+\rho)]$ and $[(b,a+\rho),(m,n)]$. Thus, provided that $\gamma$ projects onto a closed curve on $X$, that is to say provided that it does not encounter the slit, it will be the shortest curve satisfying the constraint illustrated by Figure \ref{fig:contrainte 2}. Its length will then provide a lower bound on the length of any curve satisfying the constraint of Figure   \ref{fig:contrainte 2}, and in particular on the length of simple curves representing $(m,n)$.  Moreover, if $\gamma$ projects onto a simple curve this lower bound would be a minimum and we would have found the shortest simple curve representing $h$. Hence we have to check if this path $\gamma$ satisfies the required properties, that is not to intersect the slit and to project onto a simple curve. There are three distinct cases to consider, corresponding to the various possibilities for the Farey parents of $n/m$ to be associated to visible homology classes or not.

\subsection{Both Farey parents are visible }

Let $a/b$ and $c/d$ be the Farey parents of $n/m$, with $a/b<c/d$, and assume that $ b\rho \le 1$ and $d\rho\le 1$, where $\rho$ is the length of the slit. This means that the classes $(b,a)$ and $(d,c)$ are visible: in that sense we say that the Farey parents of $n/m$ are visible.

\begin{lemma}
Let $\gamma$ be the path in $\tilde{X}$ from $(0,0)$ to $(m,n)$, obtained by the concatenation of the segments  $[(0,0), (b,a+\rho)]$ and $[(b,a+\rho),(m,n)]$.
	If both Farey parents of $n/m$ are visible then the path $\gamma$ does not intersect the slit.
\end{lemma}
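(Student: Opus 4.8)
The plan is to check the two segments making up $\gamma$ one at a time, reducing everything to a single elementary inequality about fractional parts. Recall first that the slits of $\tilde X$ are the \emph{open} vertical segments $\{(p,q+s):s\in(0,\rho)\}$ over $(p,q)\in\ZZ^2$, and that, $a/b$ and $c/d$ being genuine Farey fractions of a non-integer $n/m$ (here $m\ge 2$ since $m\rho>1$ and $\rho<1$), we have $b,d\ge 1$, $\gcd(a,b)=\gcd(c,d)=1$, and $(m,n)=(b+d,a+c)$ by the mediant property. Since each of the two segments of $\gamma$ has nonzero horizontal displacement ($b\ge1$, resp.\ $d\ge1$), it meets any vertical line $x=p$ in at most one point; hence $\gamma$ can touch a slit only at one of its crossings of the integer lines $x=0,1,\dots,m$, and these are the only points I need to examine.

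First I would clear the three special abscissas. At $x=0$ the curve sits at $(0,0)$; at $x=m$ it sits at $(m,n)$ with $n\in\ZZ$; and at $x=b$ it sits at $(b,a+\rho)$, which is the \emph{top} endpoint of the slit $\{(b,a+s):s\in(0,\rho)\}$ and so still lies in $\tilde X$ because $\rho\notin(0,\rho)$. These three points are therefore harmless, and it remains to treat the interior crossings of each segment.

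For the first segment $[(0,0),(b,a+\rho)]$ and an integer $k$ with $1\le k\le b-1$, the height at $x=k$ is $k(a+\rho)/b$; writing $r_k:=ka\bmod b$, coprimality of $a$ and $b$ together with $0<k<b$ forces $r_k\in\{1,\dots,b-1\}$, and the fractional part of the height equals $\{r_k/b+k\rho/b\}$. The visibility hypothesis $b\rho\le 1$, i.e.\ $\rho\le 1/b$, then yields
$$\rho\ \le\ \frac1b\ <\ \frac1b+\frac{\rho}{b}\ \le\ \frac{r_k}{b}+\frac{k\rho}{b}\ \le\ \frac{b-1}{b}+\frac{(b-1)\rho}{b}\ <\ \frac{b-1}{b}+\frac1b\ =\ 1,$$
so this value lies in $(\rho,1)$, equals its own fractional part, and hence the point $(k,k(a+\rho)/b)$ avoids every slit. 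The second segment $[(b,a+\rho),(m,n)]$ has direction vector $(d,c-\rho)$, so at $x=b+j$ with $1\le j\le d-1$ its height is $a+\tfrac{jc}{d}+\tfrac{(d-j)\rho}{d}$; setting $s_j:=jc\bmod d\in\{1,\dots,d-1\}$ (using $\gcd(c,d)=1$) and noting $1\le d-j\le d-1$, the visibility hypothesis $d\rho\le 1$ gives, by the identical chain of inequalities, $\{s_j/d+(d-j)\rho/d\}\in(\rho,1)$. Thus neither segment meets a slit, so $\gamma$ does not intersect the slit.

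The step I would watch most carefully is the boundary bookkeeping of the second paragraph: the slits are open, which is exactly what makes the shared endpoint $(b,a+\rho)$ --- sitting precisely at the top of a slit --- admissible; and the fact that $r_k$ and $s_j$ never vanish is the one place where irreducibility of the Farey fractions $a/b$ and $c/d$ enters. Everything else is the displayed inequality, which is completely routine and whose two ends close to $(\rho,1)$ precisely because visibility forces $\rho\le 1/b$ and $\rho\le 1/d$; no further input is needed.
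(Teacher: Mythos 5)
Your proof is correct and follows essentially the same route as the paper's: reduce the question to the fractional parts of the heights at the integer abscissas, write them as $\{r/b+k\rho/b\}$ (resp.\ the analogue for $d$), and use the visibility hypotheses $b\rho\le 1$, $d\rho\le 1$ to trap these values in $(\rho,1)$. The only cosmetic differences are that you parametrize the second segment directly from $(b,a+\rho)$ instead of translating by $(-b,-a-\rho)$ and reversing the slit as the paper does, and that you explicitly clear the shared endpoint $(b,a+\rho)$ and the crossings at $x=0,m$, which the paper leaves implicit.
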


\begin{proof}
	We check that the two segments composing $\gamma$ do not intersect the slit.
	
	\begin{itemize}\renewcommand{\labelitemi}{$\bullet$}
		
		\item The segment from $(0,0)$ to $(b,a+\rho)$ has slope $(a+\rho)/b$, it crosses the vertical of the slit at the points $(k,k \mathlarger{\frac{a+\rho}{b}})$ with $k=1,...,b-1$. 
		An intersection with the slit occurs at such a point if and only if 
		$$\left \{ k\frac{a+\rho}{b} \right \} < \rho$$
		where $\{. \}$ denotes the fractional part. We have:
		$$\left \{ k\frac{a+\rho}{b} \right \}  = \left \{\frac{r}{b} +\frac{k\rho}{b} \right \} $$
		where $0\le r< b$ is the remainder in the Euclidean division of $ka$ by $b$. Let $L = \lfloor 1/\rho \rfloor$ be the integral part of $1/\rho$. We have:
		
		$$ \frac{r}{b}+\frac{k \rho}{b}  \le \frac{r+\mathlarger{\frac{k}{L}}}{b}
		\\ \le \frac{r+\mathlarger{\frac{b-1}{L}}}{b} 
		\\ \le \frac{r+1}{b} 
		\\ \le 1
		$$
		because $\rho \le 1/L$, $k\le b-1$, $b\le L$ and $r\le b-1$. Hence $ \left \{ \mathlarger{\frac{r}{b} + \frac{k\rho}{b}} \right \} = \mathlarger{\frac{r}{b} + \frac{k\rho}{b}}$. Moreover,
		
		$$\frac{r}{b} + \frac{k\rho}{b} \ge \frac{1+k\rho}{b} > \frac{1}{b} \ge \frac{1}{L} \ge \rho$$
		
		so for any $k=1,...,b-1$,
		
		$$\left  \{ k\frac{a+\rho}{b}  \right \} \ge \rho$$
		
		and the segment $[(0,0),(b,a+\rho)]$ does not intersect the slit.
		
		\item For the case of the segment from $(b,a+\rho)$ to $(m,n)$, the computations become easier when we translate the problem by $(-b,-a-\rho)$. Since $m=b+d$ and $n=a+c$, the problem is equivalent to determining whether or not the segment $[(0,0),(d,c-\rho)]$ intersects the slit, with the copies of the slit being this time the sets $\left \{ (p,q-s), s\in [0,\rho] \right \}$. The segment $[(0,0),(d,c-\rho)]$ has slope $(c-\rho)/d$ and crosses the vertical of the slit at the points $(k,k\mathlarger{\frac{c-\rho}{d}})$, $k=1,...,d-1$, with intersection with the slit if and only if
		$$ \left  \{ k \frac{c-\rho}{d} \right \} > 1-\rho $$
		
		Let $r$ be the remainder in the Euclidean division of $kc$ by $d$. We have $\left \{ k \mathlarger{\frac{c-\rho}{d}}\right \}=\left \{ \mathlarger{\frac{r-k\rho}{d}}\right \}$. Moreover,
		
		$$\frac{r-k\rho}{d} > \frac{1-k\rho}{d} \ge \frac{1-(d-1)\rho}{d} > 0$$ 
		
		because $(d-1)\rho \le 1 - \rho < 1$, and since $1/d > \rho$ we have
		
		$$ \frac{r-k\rho}{d} < \frac{d-1-k\rho}{d} = 1-\frac{1+k\rho}{d} < 1- \frac{1}{d} < 1-\rho$$ 
		
		 Hence the segment $[(0,0),(d,c-\rho)]$  does not intersect the slit.

	\end{itemize}
\end{proof}

Thus the path $\gamma$ projects onto a closed curve on $X$ that represents $h$. It is the shortest curve satisfying the constraint illustrated by Figure \ref{fig:contrainte 2}. So in particular, if this curve is simple, it is the shortest simple closed curve representing $h$.

\begin{prop}\label{2 parent}
	Using the previous notations, the path $\gamma$ projects onto a simple closed curve. This curve is minimizing for $h=(m,n)$ and has the same length as $\gamma$, so
	$$\St{(m,n)} = l(\gamma)= \sqrt{b^2+(a+\rho)^2}+\sqrt{d^2+(c-\rho)^2}$$
	where $n/m = a/b \oplus c/d$ with $b,d \le 1/\rho$. Since minimizing curves for $h$ are simple, the unit ball of the stable norm is strictly convex in the direction $(m,n)$.
\end{prop}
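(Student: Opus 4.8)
The plan is to prove, in order: (i) $\gamma$ projects to a \emph{simple} closed curve $\bar\gamma$; (ii) $\bar\gamma$ is minimizing for $h$, so $\St{(m,n)}=l(\gamma)$; and (iii) strict convexity, which then follows from the lemma of Section~2. For (i), I would first note that by the preceding lemma $\gamma$ misses every slit, hence projects to a closed curve $\bar\gamma$ representing $h$, and that $\gamma$ is the graph over $[0,m]$ of a piecewise-affine function $g$ with the single break $g(b)=a+\rho$, so $\gamma$ is injective in $\tilde X$. The key structural facts are that $g$ is strictly increasing (both slopes $(a+\rho)/b$ and $(c-\rho)/d$ are positive since $c\ge 1>\rho$) and \emph{concave} — the first slope exceeds the second, which is exactly the hypothesis $\rho m>1$ — so that $x\mapsto g(x)-g(x-k)$ is non-increasing for each $k\ge 1$. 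Consequently $\gamma$ meets its translate $\gamma+(k,\ell)$ if and only if $\ell\in[\,n-g(m-k),\,g(k)\,]$, and $\bar\gamma$ is simple as soon as this interval contains no integer for every $1\le k\le m-1$ (translates with $k=0$ are excluded by strict monotonicity, and the remaining translates only produce the legitimate endpoint coincidence $(0,0)\sim(m,n)$). Substituting $a=(bn-1)/m$ and $c=(dn+1)/m$ — both are forms of $bc-ad=1$, i.e.\ $bn-am=cm-dn=1$ — I would compute $g(k)=\frac{kn}{m}+\frac{k(\rho m-1)}{bm}$ for $k\le b$ and the analogous expression for $k\ge b$; then the visibility bounds $\rho b\le 1$, $\rho d\le 1$ (hence $\rho m<2$) give $\lfloor kn/m\rfloor<g(k)<\lceil kn/m\rceil$ for all $1\le k\le m-1$. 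Using this for $k$ and for $m-k$ traps $[\,n-g(m-k),g(k)\,]$ inside the open interval $(\lfloor kn/m\rfloor,\lceil kn/m\rceil)$, which is integer-free because $\gcd(m,n)=1$; equivalently, $\gamma$ realizes the Osborne--Zieschang word $V_{m,n}=V_{b,a}tsV_{d,c}'$ of Theorem~\ref{mot} by passing exactly through the slit endpoint $(b,a+\rho)$, placing $\bar\gamma$ in the primitive free homotopy class of $h$.

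For (ii) and (iii): since every simple representative of $h$ obeys the constraint of Figure~\ref{fig:contrainte 2} and $\gamma$ is the shortest curve obeying it, $\bar\gamma$ is a shortest simple representative, of length $l(\gamma)=\sqrt{b^2+(a+\rho)^2}+\sqrt{d^2+(c-\rho)^2}$. A \emph{non-simple} minimizing representative would split at a self-intersection into two loops of non-trivial homology (deleting a null-homologous loop would shorten it), so by Proposition~\ref{non simple} its length is at least $\St{(b,a)}+\St{(d,c)}=\sqrt{b^2+a^2}+\sqrt{d^2+c^2}$ (the equality because both parents are visible). Hence it remains to prove the strict inequality $\sqrt{b^2+(a+\rho)^2}-\sqrt{b^2+a^2}<\sqrt{d^2+c^2}-\sqrt{d^2+(c-\rho)^2}$. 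I would write the two sides as $\int_a^{a+\rho}\frac{u\,du}{\sqrt{b^2+u^2}}$ and $\int_{c-\rho}^{c}\frac{u\,du}{\sqrt{d^2+u^2}}$, substitute so that both intervals are centred at their midpoints $a+\tfrac\rho2$ and $c-\tfrac\rho2$, and reduce to the pointwise claim that for all $t\in[-\tfrac\rho2,\tfrac\rho2]$,
$$\frac{(c-\tfrac\rho2)+t}{\sqrt{d^2+((c-\tfrac\rho2)+t)^2}}\ \ge\ \frac{(a+\tfrac\rho2)+t}{\sqrt{b^2+((a+\tfrac\rho2)+t)^2}}.$$
Since $v\mapsto v/\sqrt{D^2+v^2}$ increases with $v/D$ and both numerators are $\ge 0$ on that range, this follows from $b\big((c-\tfrac\rho2)+t\big)-d\big((a+\tfrac\rho2)+t\big)=\big(1-\tfrac\rho2 m\big)-(d-b)t\ge 1-\tfrac\rho2 m-\tfrac\rho2|d-b|=1-\rho\max(b,d)\ge 0$, which is precisely the visibility of both Farey parents. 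Equality can occur for at most one value of $t$, so the integral inequality is strict; thus no non-simple curve is minimizing, $\St{(m,n)}=l(\gamma)$, and every minimizing representative of $h$ is simple, so by the lemma of Section~2 the unit ball is strictly convex in the direction $(m,n)$.

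The main obstacle will be step (i): organizing the case analysis for $g(k)<\lceil kn/m\rceil$ according to which affine piece of $\gamma$ the vertical $x=k$ meets, and isolating the two Diophantine identities $bn-am=cm-dn=1$ so that the bounds $\rho b,\rho d\le 1$ do the work. The inequality against the non-simple competitors in step (ii) is the other delicate point, but the centring trick above turns it into the clean condition $\rho\max(b,d)\le 1$, which is exactly what visibility of both parents gives.
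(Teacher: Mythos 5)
Your proposal is correct, but it reaches the two key conclusions by genuinely different arguments than the paper, so a comparison is worthwhile. For the simplicity of the projection of $\gamma$, the paper does \emph{not} verify it directly: it first proves the strict inequality $l(\gamma)<\St{(b,a)}+\St{(d,c)}$ and then observes that, since by Proposition~\ref{non simple} every non-simple representative (with homologically non-trivial loops) has length at least $\St{(b,a)}+\St{(d,c)}$, the projection of $\gamma$ cannot be non-simple — simplicity comes ``for free''. Your direct verification via the lattice-translate criterion ($\bar\gamma$ simple iff no integer lies in $[\,n-g(m-k),g(k)\,]$ for $1\le k\le m-1$, using concavity of $g$ and the identities $bn-am=cm-dn=1$) is sound: the uniform bound $g(k)-kn/m\le \rho-1/m<1/m\le \lceil kn/m\rceil-kn/m$ does close once one notes that $\rho m=2$ would force $b=d=1/\rho$, hence $b=d=1$ and $\rho=1$, which is excluded; so the strictness of $\rho m<2$ that your argument needs is available. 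This is more work than the paper's shortcut, but it is self-contained and does not route simplicity through Proposition~\ref{non simple}. For the strict inequality $\sqrt{b^2+(a+\rho)^2}+\sqrt{d^2+(c-\rho)^2}<\sqrt{b^2+a^2}+\sqrt{d^2+c^2}$, the paper argues geometrically: the chain $\frac{c-\rho}{d}<\frac{a}{b}<\frac{n}{m}<\frac{a+\rho}{b}\le\frac{c}{d}$ shows the triangle with vertices $(0,0),(b,a+\rho),(m,n)$ is contained in the one with vertices $(0,0),(d,c),(m,n)$, and nested convex bodies have nested perimeters. Your integral-comparison argument — writing each difference of square roots as $\int u\,du/\sqrt{D^2+u^2}$ over an interval of length $\rho$, centring, and reducing to $1-\tfrac{\rho m}{2}-(d-b)t\ge 0$, i.e.\ $\rho\max(b,d)\le 1$ — is a correct alternative and isolates cleanly how the visibility of both parents enters; the handling of the boundary case (equality of integrands at a single $t$, or never when $b=d$) is what preserves strictness. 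The remainder of your argument — every simple representative obeys the constraint of Figure~\ref{fig:contrainte 2} coming from Theorem~\ref{mot}, $\gamma$ is the shortest path obeying it because the straight segment crosses $x=b$ below height $a+\rho$, non-simple minimizers must split into two homologically non-trivial loops, and strict convexity then follows from the lemma of Section~2 — coincides with the paper's. Both routes are valid; the paper's is shorter, yours trades slickness for explicit verifications.
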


\begin{proof}
	 We know (see Proposition \ref{non simple}) that the shortest non-simple curve representing $h$ has length $\St{(b,a)} + \St{(d,c)}$, and since we assumed that both $(b,a)$ and $(d,c)$ are visible by Proposition \ref{visible} we know what their stable norm is. Thus the shortest non-simple curve representing $h$ has length $$\St{(b,a)} + \St{(d,c)} = \sqrt{b^2+a^2} + \sqrt{d^2 + c^2}$$ and we only need to show that $\gamma$ projects onto a shorter curve: if so it cannot be non-simple, thus necessarily it has to be simple. Note that we could show directly that $\gamma$ projects onto a simple curve but we would still have to give the exact same following proof in order to show it is minimizing. This way, we get that $\gamma$ projects onto a simple curve for free. \newline
  
  Since $a/b$ and $c/d$ are Farey neighbours they satisfy the relation $bc-ad=1$, so
	
	$$\frac{n}{m} - \frac{a+\rho}{b} = \frac{a+c}{b+d}-\frac{a}{b} - \frac{\rho}{b}= \frac{1 }{b}\left (\frac{1}{b+d} - \rho \right ) < 0$$
	
	because $b+d=m > 1/\rho$ and since $d\rho \le 1$ we have $\rho\le 1/d$ and:
	$$\frac{a+\rho}{b} = \frac{a}{b} + \frac{\rho}{b} \le  \frac{a}{b} + \frac{1}{bd} = \frac{c}{d}$$
	
	hence we have $\mathlarger{\frac{c-\rho}{d} < \frac{a}{b} < \frac{n}{m} < \frac{a+\rho}{b} < \frac{c}{d}}$. Thus the triangle formed by the points $(0,0)$, $(d,c)$ and $(m,n)$ contains the triangle formed by the points $(0,0), (b,a+\rho)$ and $(m,n)$ (see Figure \ref{triangle inclus}). We deduce that the perimeter of the first triangle is greater than the perimeter of the second. But their perimeters are respectively $$\St{(b,a)} + \St{(d,c)} + \sqrt{m^2+n^2}$$ and $$ \sqrt{b^2+(a+\rho)^2}+\sqrt{d^2+(c-\rho)^2} + \sqrt{m^2+n^2} = l(\gamma) + \sqrt{m^2+n^2} $$ so we finally have $l(\gamma) < \St{(b,a)} + \St{(d,c)}$, hence $\gamma$ projects onto a simple curve on $X$.
	\begin{center}
		\includegraphics[scale=0.8]{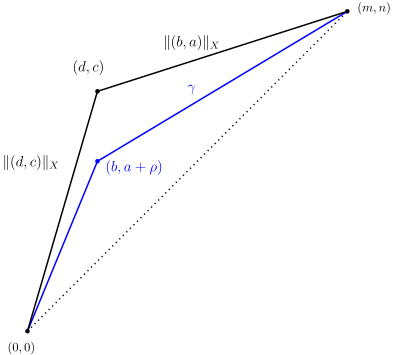}
		\captionof{figure}{The path $\gamma$ is shorter}
		\label{triangle inclus}
	\end{center}
But we know (see discussion below Figure \ref{fig:contrainte 2}) the length of $\gamma$ is a lower bound on the lengths of simple curves representing $h$. Since $\gamma$ projects onto a simple curve, this means that the shortest simple curve representing $h$ has length $l(\gamma)=\sqrt{b^2+(a+\rho)^2}+\sqrt{d^2+(c-\rho)^2} $. Moreover, since it is shorter than the length of the shortest non-simple curve representing $h$, the path $\gamma$ projects onto a minimizing curve for $h$, and $l(\gamma) = \St{h}$.

\end{proof}

Remark that we did not even have to worry about whether or not $\gamma$ satisfies the constraint illustrated by Figure \ref{contrainte 1}, because we showed the simplicity of its projection onto $X$ more directly. However if one feels the need to double check, a direct computation shows that $\gamma$ does indeed satisfy the condition to cross the vertical $x=d$ between $(d,c-1+\rho)$ and $(d,c)$, as it is expected for a simple curve.

\subsection{One of the Farey parents is visible}

This time let us assume that exactly one of the Farey parents of $n/m$ is visible, i.e if $a/b < c/d$ denote the Farey parents of $n/m$ we have $(b\rho\le 1$ and $d \rho> 1)$ or $(b\rho >1$ and $d \rho \le 1)$. Although the computations are slightly different, we get the same results as in the case where both Farey parents are visible.

\begin{lemma}
If one of the Farey parents of $n/m$ is visible then the path $\gamma$, composed of the segments $[(0,0),(b,a+\rho)]$ and $[(b,a+\rho),(m,n)]$, does not intersect the slit. Hence it projects onto a closed curve on $X$.
\end{lemma}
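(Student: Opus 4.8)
The plan is to reduce, exactly as in the previous lemma, to checking that the two segments $[(0,0),(b,a+\rho)]$ and $[(b,a+\rho),(m,n)]$ making up $\gamma$ each avoid the slit, and to handle the two cases of the hypothesis separately. Write $L=\lfloor 1/\rho\rfloor$, so that a parent $a/b$ (resp. $c/d$) is visible exactly when $b\le L$ (resp. $d\le L$), and "exactly one parent visible" means exactly one of $b,d$ lies in $\{1,\dots,L\}$. In either case one of the two segments is already disposed of by the proof of the previous lemma: if $b\le L$ the segment $[(0,0),(b,a+\rho)]$ misses the slit by the first bullet of that proof (which used only $b\le L$), and, after the translation by $(-b,-a-\rho)$ that turns $[(b,a+\rho),(m,n)]$ into $[(0,0),(d,c-\rho)]$, if $d\le L$ the second segment misses the slit by the second bullet of that proof (which used only $d\le L$). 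So the actual content is the two remaining mixed statements: that $[(0,0),(b,a+\rho)]$ misses the slit when $d\le L$ (the case $b\rho>1$), and dually that $[(0,0),(d,c-\rho)]$ misses the downward slit when $b\le L$ (the case $d\rho>1$). These two are mirror images of one another — reversing a segment swaps upward and downward copies of the slit, and the two Farey parents play symmetric roles — so it is enough to prove the first.

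So assume $d\le L$ and look at $[(0,0),(b,a+\rho)]$. It crosses $x=k$, for $k=1,\dots,b-1$, at height $k\tfrac{a+\rho}{b}=q_k+\tfrac{r_k+k\rho}{b}$, where $r_k\in\{1,\dots,b-1\}$ is the remainder of $ka$ modulo $b$ (and $\gcd(a,b)=1$ makes $k\mapsto r_k$ a bijection of $\{1,\dots,b-1\}$). Since $0<\tfrac{r_k+k\rho}{b}<2$, a crossing can only meet the $j=0$ or the $j=1$ integer translate of the slit, and a short check shows it meets the (open) slit precisely when $r_k<(b-k)\rho$ or $r_k>b-k\rho$; the substitution $k\leftrightarrow b-k$ (under which $r_{b-k}=b-r_k$) turns the second condition into the first, so it suffices to prove $r_k\ge(b-k)\rho$ for all $k$. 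Here the Farey relation $bc-ad=1$ enters: it gives $ad\equiv-1\pmod b$, hence $a^{-1}\equiv-d\pmod b$, and therefore $b-k\equiv r_kd\pmod b$, i.e. $b-k=(r_kd\bmod b)$. Thus the desired inequality is $(r_kd\bmod b)\le r_k/\rho$, to be checked for every residue $r\in\{1,\dots,b-1\}$, and this follows from $d\le L$ and the elementary bound $\rho L\le 1$: if $rd<b$ then $rd\bmod b=rd\le rL\le r/\rho$, while if $rd\ge b$ then $r\ge b/d\ge b\rho$, so $r/\rho\ge b>b-1\ge rd\bmod b$. Hence no crossing meets the slit; the mirror computation — using $bc\equiv1\pmod d$, i.e. $c^{-1}\equiv b\pmod d$, and $b\le L$ — settles the case $d\rho>1$. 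In all cases $\gamma$ avoids the slit and so projects to a closed curve on $X$.

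I expect the only real friction to be bookkeeping: pinning down which (at most two) integer translates of the slit a given crossing can hit and writing the exact inequalities that describe a hit — with care that the copies of the slit are \emph{open}, so a crossing landing on an endpoint is harmless — and then tracking residues through the Farey relation $bc-ad=1$. Once the problem is reduced to the inequality $(rd\bmod b)\le r/\rho$, closing it via $\rho\lfloor 1/\rho\rfloor\le 1$ is immediate; there is no genuinely hard estimate, only the discipline of reusing rather than re-deriving the previous lemma's computation for the visible segment.
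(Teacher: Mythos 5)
Your proof is correct, and its overall architecture is the paper's: split $\gamma$ into its two segments, dispose of the segment attached to the visible parent by observing that the corresponding bullet of the previous lemma used only the visibility of that parent, and make a fresh computation for the remaining segment. That remaining computation, however, is genuinely different. The paper writes $\frac{a+\rho}{b}=\frac{c}{d}-\frac{1}{bd}+\frac{\rho}{b}$, so that the relevant fractional part at $x=k$ becomes $\bigl\{\frac{r}{d}+k\frac{\rho-1/d}{b}\bigr\}$ with $r=kc\bmod d$, and then sandwiches this quantity between $\rho$ and $1$ by direct estimates exploiting $\rho-1/d\le 0$. You instead keep the remainder $r_k=ka\bmod b$, enumerate the (at most two) integer translates of the slit that the crossing at $x=k$ could hit, fold one forbidden condition into the other via the involution $k\mapsto b-k$, and use $bc-ad=1$ to rewrite $b-k$ as $r_kd\bmod b$, reducing everything to the single inequality $(rd\bmod b)\le r/\rho$, which you close with $d\le\lfloor 1/\rho\rfloor$. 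Both routes rest on the same two inputs (the Farey relation and the visibility of one parent), but yours has a small technical advantage: since $\gcd(a,b)=1$ forces $r_k\ne 0$ for $1\le k\le b-1$, you never meet a degenerate residue, whereas the paper's lower bound $\frac{r}{d}+k\frac{\rho-1/d}{b}\ge\frac{1}{d}+\cdots$ tacitly assumes $r\ge 1$ and would need an extra (easy) word when $d$ divides $k$. Your handling of the second sub-case by symmetry is also legitimate and can be made precise in one line: reflecting in the $x$-axis turns the downward slits into upward ones and sends $[(0,0),(d,c-\rho)]$ to $[(0,0),(d,-c+\rho)]$, with $(-c)/d<(-a)/b$ again a Farey pair whose parent of denominator $b$ is visible, so the first mixed case applies verbatim; alternatively your sketched direct computation with $bc\equiv 1\pmod d$ works. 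No gaps.
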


\begin{proof}
	In order to be able to do explicit computations, let us assume that $d\rho \le 1$ and $b\rho  >1$; the proof is identical if we assume $d\rho  > 1$ and $b\rho\le 1$. The same proof as in the previous part still holds to show that the segment $[(b,a+\rho),(m,n)]$ does not intersect the slit, but if fails for the other segment since now $1/b < \rho$.
	\\ \\
	The segment $[(0,0), (b,a+\rho)]$ crosses the vertical of the slit at points $\left(k,k\mathlarger{\frac{a+\rho}{b}}\right)$, $k=1,...,b-1$, with intersection with the slit if and only if $\left \{ k\mathlarger{\frac{a+\rho}{b}} \right \} < \rho$. Since $a/b$ and $c/d$ are a Farey pair, we have 
	$$ \frac{a}{b} = \frac{c}{d} - \frac{1}{bd} $$
	so
	
	$$\left \{ k \frac{a+\rho}{b} \right \} =  \left \{ k \left(\frac{c}{d}-\frac{1}{bd} + \frac{\rho}{b} \right)\right \}=\left \{ \frac{r}{d}+k\frac{\rho-1/d}{b} \right \}$$
	
	where $r$ is the remainder in the Euclidean division of $kc$ by $d$. Remark that $d\rho\le 1 $ so $\rho - 1/d <0$. On one hand
	
	$$\frac{r}{d}+k\frac{\rho-1/d}{b} \le \frac{r}{d} \le \frac{d-1}{d} < 1$$ 
	
	and on the other hand
	
	$$\frac{r}{d}+k\frac{\rho-1/d}{b} \ge \frac{1}{d} + \frac{k(\rho-1/d)}{b} \ge \frac{1}{d} + \frac{(b-1)(\rho-1/d)}{b}$$
	
	so 
	
	$$\frac{r}{d}+k\frac{\rho-1/d}{b} \ge \rho + \frac{1/d - \rho}{b}> \rho$$
	
	Thus $\mathlarger{\left \{ \frac{r}{d}+k\frac{\rho-1/d}{b} \right \} = \frac{r}{d}+k\frac{\rho-1/d}{b}} > \rho$ and the segment $[(b,a+\rho),(m,n)]$ does not intersect the slit.
	
\end{proof}

\begin{prop}\label{1 parent}
	If one of the Farey parents $a/b<c/d$ of $n/m$ is visible, the path $\gamma$ defined above projects onto a simple closed minimizing curve for $h$, and we have
	$$\St{(m,n)} = \sqrt{b^2+(a+\rho)^2}+\sqrt{d^2+(c-\rho)^2}$$
	Moreover, the unit ball of the stable norm is strictly convex in the direction $(m,n)$.
\end{prop}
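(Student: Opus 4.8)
The plan is to mirror the structure of Proposition~\ref{2 parent} as closely as possible, substituting the one visible-parent computations where needed. Assume WLOG $d\rho \le 1$ and $b\rho > 1$ (the other case is symmetric). By the preceding lemma the path $\gamma$, concatenation of $[(0,0),(b,a+\rho)]$ and $[(b,a+\rho),(m,n)]$, does not meet the slit, hence projects onto a closed curve on $X$ representing $h$; and by the discussion following Figure~\ref{fig:contrainte 2}, its length $l(\gamma)=\sqrt{b^2+(a+\rho)^2}+\sqrt{d^2+(c-\rho)^2}$ is a lower bound on the length of any simple closed curve representing $h$. So the whole proposition reduces to showing that $\gamma$ itself projects onto a \emph{simple} closed curve, and that this simple curve is in fact minimizing among \emph{all} closed curves (simple or not) representing $h$.

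First I would establish simplicity. As in the two-visible-parents case, the cleanest route is not to argue simplicity directly but to show $l(\gamma) < \St{(b,a)} + \St{(d,c)}$, i.e.\ that $\gamma$ is strictly shorter than the shortest non-simple representative of $h$ given by Proposition~\ref{non simple}; since $\gamma$ projects onto a closed curve representing $h$, it then cannot be non-simple. The inequality $\tfrac{c-\rho}{d} < \tfrac{a}{b} < \tfrac{n}{m} < \tfrac{a+\rho}{b} < \tfrac{c}{d}$ is proved exactly as before: $\tfrac{n}{m} - \tfrac{a+\rho}{b} = \tfrac1b\bigl(\tfrac1{b+d}-\rho\bigr) < 0$ because $b+d = m > 1/\rho$, and $\tfrac{a+\rho}{b} \le \tfrac{c}{d}$ follows from $\rho \le 1/d$ together with $bc - ad = 1$ — note this is precisely the place the hypothesis $d\rho \le 1$ (the visible parent) is used, which is why the argument needs $d$ to be the visible one in our WLOG normalization. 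Consequently the triangle with vertices $(0,0),(d,c),(m,n)$ contains the triangle with vertices $(0,0),(b,a+\rho),(m,n)$, so its perimeter is larger; subtracting the common side $\sqrt{m^2+n^2}$ gives $l(\gamma) < \St{(b,a)}+\St{(d,c)}$, hence $\gamma$ projects onto a simple curve.

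Now the key difference from the two-parent case: here $(b,a)$ is \emph{not} visible, so we cannot simply write $\St{(b,a)} = \sqrt{b^2+a^2}$, and the identity $\St{(b,a)}+\St{(d,c)} = \sqrt{b^2+a^2}+\sqrt{d^2+c^2}$ used before is unavailable. But this does not matter: the perimeter comparison only ever used the \emph{geometric} nesting of the two triangles and the fact that Proposition~\ref{non simple} expresses the shortest non-simple length as $\St{(b,a)}+\St{(d,c)}$ — it never needed an explicit formula for that quantity. So the chain $l(\gamma) < \St{(b,a)}+\St{(d,c)} = $ (shortest non-simple length) still goes through verbatim. Combining: the shortest simple representative of $h$ has length exactly $l(\gamma)$ (lower bound attained), and since $l(\gamma)$ is strictly below the shortest non-simple length, $\gamma$ projects onto a minimizing curve for $h$ and $\St{(m,n)} = l(\gamma) = \sqrt{b^2+(a+\rho)^2}+\sqrt{d^2+(c-\rho)^2}$. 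Finally, every minimizing curve for $h$ must then be simple (a non-simple one would have length $\St{(b,a)}+\St{(d,c)} > \St{h}$), so by the Lemma of Section~2 the unit ball of the stable norm is strictly convex in the direction $(m,n)$.

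I expect the only real subtlety to be bookkeeping around the WLOG reduction: the inequality $\tfrac{a+\rho}{b}\le\tfrac{c}{d}$ genuinely requires the visible parent to be $c/d$, so in the symmetric case $b\rho \le 1$, $d\rho > 1$ one should instead build $\gamma$ from $[(0,0),(b,a+\rho)]$ and note the mirrored inequality $\tfrac{c-\rho}{d}$-side holds, or equivalently apply the reflection $(m,n)\mapsto$ exchange-of-parents symmetry used throughout the paper. Everything else is a transcription of the Proposition~\ref{2 parent} argument with the explicit evaluation of $\St{(b,a)}$ deleted.
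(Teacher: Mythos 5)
Your overall strategy (lower bound from the word constraint of Figure \ref{fig:contrainte 2}, then beat the shortest non-simple representative of Proposition \ref{non simple}) is the right one, and reducing to one of the two cases by the reflection symmetry is legitimate. But the central step fails: in your normalization $b\rho>1$, $d\rho\le 1$, the triangle $(0,0),(b,a+\rho),(m,n)$ is \emph{not} contained in the triangle $(0,0),(d,c),(m,n)$. The chain of slope inequalities $\frac{a}{b}<\frac{n}{m}<\frac{a+\rho}{b}\le\frac{c}{d}$ only controls the position of $(b,a+\rho)$ relative to the two edges through the origin; it says nothing about the third edge $[(d,c),(m,n)]$. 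That edge lies on the line $ax-by=-1$ and crosses the vertical $x=b$ at height $a+\frac{1}{b}$, so $(b,a+\rho)$ is on the origin's side of it precisely when $\rho\le\frac{1}{b}$ --- which is exactly the hypothesis $b\rho\le 1$ you do not have. Concretely, take $\rho=0.4$, $(m,n)=(5,2)$ with Farey parents $\frac{1}{3}<\frac{1}{2}$, so $(b,a)=(3,1)$, $(d,c)=(2,1)$: the point $(3,1.4)$ lies strictly above the segment from $(2,1)$ to $(5,2)$, and indeed $l(\gamma)=\sqrt{10.96}+\sqrt{4.36}\approx 5.3987$ \emph{exceeds} $\sqrt{10}+\sqrt{5}\approx 5.3984$. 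So not only does the inclusion fail, the inequality it was meant to deliver, $l(\gamma)<\|(b,a)\|_2+\|(d,c)\|_2$, is simply false. The proposition survives only because $\St{(b,a)}>\|(b,a)\|_2$ for the non-visible parent (here $\St{(3,1)}=\sqrt{1.16}+\sqrt{4.36}\approx 3.1651>\sqrt{10}$), so ``deleting the explicit evaluation of $\St{(b,a)}$'' is precisely the step that cannot be transcribed: the excess of the stable norm of the non-visible parent over its Euclidean norm is what makes $l(\gamma)<\St{(b,a)}+\St{(d,c)}$ true, and your comparison never sees it.

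The paper closes this gap with an idea absent from your proposal: it replaces the straight side of the comparison triangle corresponding to the \emph{non-visible} parent by that parent's genuine minimizing curve, a broken line which bulges above the straight segment far enough for the bounded region to contain $(b,a+\rho)$, and then runs the perimeter comparison against that curved boundary (Figure \ref{inclus}). Producing this minimizing curve requires an induction down the Farey tree: the non-visible parent always has the visible class $a/b$ among its own Farey parents, so one descends through the mediants $\alpha_{i+1}/\beta_{i+1}=\alpha_i/\beta_i\oplus a/b$ until reaching a class with two visible parents, applies Proposition \ref{2 parent} there, and climbs back up constructing minimizing curves at each stage. You would need to add this induction (or some other mechanism that quantitatively exploits $\St{(b,a)}-\|(b,a)\|_2>0$) for your proof to go through.
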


\begin{proof}
	 Assume for instance $b\rho \le 1$ and $d\rho > 1$, as the proof is similar in the other case. Then $c/d$ and $a/b$ are Farey neighbours in some Farey sequence, and since $d > b$ we get that $a/b$ is one of the Farey parents of $c/d$. Let $\alpha/\beta$ be the other Farey parent of $c/d$, i.e $c/d = a/b \oplus \alpha/\beta$.\\ \\
	 If $\alpha / \beta$ is visible, i.e if $\beta \rho \le 1$, then by proposition \ref{2 parent} we can find a minimizing curve for $(d,c)$ and the same proof as in \ref{2 parent} holds when replacing the segment $[(0,0),(d,c)]$ by this curve, as illustrated in Figure \ref{inclus}.
	 
	 \begin{center}
	 	\includegraphics[scale=0.8]{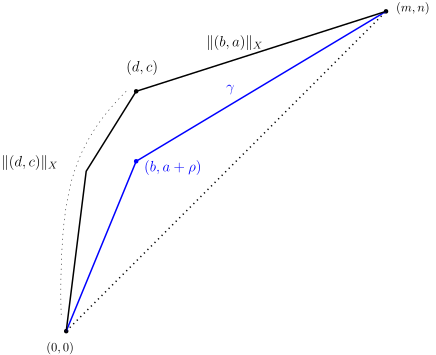}
   \captionof{figure}{The path $\gamma$ is shorter}
	 	
	 	\label{inclus}
	 \end{center}
 
 If $\alpha / \beta$ is not visible, i.e if $\beta \rho > 1$, again $\alpha/\beta$ must be a Farey child of $a/b$. By considering the second Farey parent of $\alpha/\beta$ and repeating the process, we obtain a sequence of rationals $\alpha_0/\beta_0 , ..., \alpha_{k-1}/\beta_{k-1}, \alpha_k/\beta_k = \alpha/\beta$ such that $\alpha_0/\beta_0$ is neighbour to $a/b$ in some Farey sequence and visible, and $a_{i+1}/\beta_{i+1} = \alpha_i /\beta_i \oplus a/b$. According to the previous point, we know what is a minimizing curve for $(\beta_1,\alpha_1)$, so by repeating this argument we find a minimizing curve for $(\beta_2,\alpha_2)$, and by induction we finally obtain the same situation as described by Figure \ref{inclus} if $\alpha/\beta$, and the same argument as before completes the proof.
\end{proof}

\subsection{None of the Farey parents are visible}

Finally, what happens when none of the Farey parents $a/b$ and $c/d$ of $n/m$ are visible, that is to say when $b,d > 1/\rho$ ? This time, the path which is the union of the segments $[(0,0),(b,a+\rho)]$ and $[(b,a+\rho),(m,n)]$ does not project onto a simple closed curve on the slit torus. Indeed it intersects the slit, for instance at $x=d$ as one can check with a simple computation that we will not detail. Instead we will prove a more general fact:

\begin{prop}\label{no simple}
	If none of the Farey parents of $n/m$ are visible, there is no simple geodesic in the free homotopy class $V_{m,n}$.
\end{prop}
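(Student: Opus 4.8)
The plan is to derive a contradiction from the existence of a simple geodesic in the free homotopy class $V_{m,n}$, using the two crossing constraints extracted from Theorem \ref{mot} together with the fact that neither Farey parent is visible. Recall from the discussion after Theorem \ref{mot} that any simple representative of $V_{m,n}$ lifts to a path in $\tilde X$ from $(0,0)$ to $(m,n)$ which must cross the vertical line $x=b$ between the points $(b,a+\rho)$ and $(b,a+1)$ (the second constraint, Figure \ref{fig:contrainte 2}) and must cross the vertical line $x=d$ between $(d,c-1+\rho)$ and $(d,c)$ (the first constraint, Figure \ref{contrainte 1}). First I would set up the lift $\tilde\gamma$ of the putative simple geodesic and record these two constraints as the only facts about $V_{m,n}$ we need.

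Next I would argue that a \emph{geodesic} arc in $\tilde X$ between two prescribed crossing configurations is forced to be a concatenation of straight segments whose only possible vertices are endpoints of copies of the slit: since the metric is flat away from the slits, a length-minimizing path in its homotopy class is piecewise linear and can only bend at a slit endpoint, turning around it. So the portion of $\tilde\gamma$ from $(0,0)$ up to the $x=b$ crossing is a geodesic respecting the constraint at $x=b$, and hence must be at least as long as the shortest path from $(0,0)$ to the segment $\{b\}\times[a+\rho,a+1]$; because $b\rho>1$ (first parent not visible) this shortest path is itself forced to bend around at least one slit endpoint, so it strictly beats the straight segment $[(0,0),(b,a+\rho)]$ only by being longer — more precisely, I would compare $l(\tilde\gamma)$ to the length $\St{(b,a)}+\St{(d,c)}$ of the shortest non-simple curve from Proposition \ref{non simple}. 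The key step is to show $l(\tilde\gamma)\ge \St{(b,a)}+\St{(d,c)}$, i.e. that the simple geodesic is \emph{no shorter} than the best non-simple curve; combined with the fact (to be checked, e.g. via the triangle-inclusion idea of Proposition \ref{2 parent}, but now running the other way because the relevant auxiliary curves themselves bend) that it is in fact strictly longer unless it degenerates, this contradicts the existence of a \emph{simple} geodesic: a simple geodesic realizing $V_{m,n}$ would have to be a shortest representative of its free homotopy class, yet the shortest representative is attained by a non-simple curve (splitting at a slit endpoint into the classes $(b,a)$ and $(d,c)$), and by Lemma 2.1 / Proposition \ref{non simple} a free homotopy class whose length is realized only by non-simple curves can have no simple geodesic in it.

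The cleanest way to organize the main step is by induction on $m$ (equivalently, on the depth of $n/m$ in the Farey tree), the base case being when $m$ is small enough that one of $b,d$ equals $1$ — but that case cannot occur here since $b,d>1/\rho\ge 1$, so the genuine base case is the smallest non-visible denominators, handled directly. For the inductive step, write $n/m = a/b\oplus c/d$ and observe that $a/b$ and $c/d$ have strictly smaller denominators; apply the induction hypothesis (or Propositions \ref{2 parent}, \ref{1 parent}) to identify $\St{(b,a)}$ and $\St{(d,c)}$ and the shape of their minimizing curves, then glue a minimizing curve for $(b,a)$ to a minimizing curve for $(d,c)$ at the common slit endpoint to produce a curve representing $(m,n)$ of length exactly $\St{(b,a)}+\St{(d,c)}$; finally show any simple geodesic for $V_{m,n}$ would have to satisfy the $x=b$ and $x=d$ constraints and deduce from the flat geometry that its length is $\ge\St{(b,a)}+\St{(d,c)}$, with equality forcing it to pass through a slit endpoint and therefore (being a geodesic pinched at that point) to be non-simple — a contradiction.

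The hard part will be the geometric comparison: turning the two combinatorial crossing constraints into a genuine lower bound on the length of an \emph{arbitrary} simple geodesic in the class, as opposed to the length of the one obvious candidate path $\gamma$ which, in this regime, is not even admissible because it hits the slit. One must rule out the possibility that some cleverer simple geodesic, bending around several slit endpoints, sneaks below the non-simple bound $\St{(b,a)}+\St{(d,c)}$; I expect this to require a careful convexity argument in $\tilde X$ (tracking on which side of each encountered slit the geodesic passes, and using that the $x=b$ and $x=d$ constraints pin the geodesic to the "wrong" side of an obstacle), organized along the Farey/continued-fraction recursion so that the strict triangle inequalities of Proposition \ref{2 parent} can be invoked at the visible leaves of the recursion tree.
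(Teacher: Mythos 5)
Your proposal correctly isolates the two ingredients the paper also uses (the crossing constraints at $x=b$ and $x=d$ coming from Theorem \ref{mot}, and the fact that a geodesic on a flat surface can only bend at an endpoint of the slit), but the argument you build on them has a genuine gap at its central step. Everything hinges on the inequality $l(\tilde\gamma)\ge \St{(b,a)}+\St{(d,c)}$ for an \emph{arbitrary} simple geodesic in $V_{m,n}$, and you explicitly defer this (``the hard part will be the geometric comparison'', ``I expect this to require a careful convexity argument''): as written, nothing rules out a simple geodesic that bends around several slit endpoints and undercuts the non-simple bound. Moreover, even granting that inequality, the conclusion does not follow: a non-strict inequality is compatible with the existence of a simple geodesic of length exactly $\St{(b,a)}+\St{(d,c)}$, and your proposed resolution of the equality case --- that equality forces the geodesic through a slit endpoint and ``therefore'' it is non-simple --- is a non sequitur, since a simple closed curve may perfectly well pass through a slit endpoint once. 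Two further points would also need justification: that a simple closed \emph{geodesic} is automatically a global length minimizer in its free homotopy class (true here because the relevant covers are CAT(0), but not free), and that the shortest non-simple representative of the homology class $(m,n)$ produced in Proposition \ref{non simple} actually lies in the free homotopy class $V_{m,n}$, which is what your contradiction requires.

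The paper's proof avoids all length comparisons. From the two crossing constraints together with the inequalities $\frac{c-1+\rho}{d}<\frac{a}{b}<\frac{n}{m}<\frac{c}{d}<\frac{a+\rho}{b}$ (valid precisely because $b,d>1/\rho$), it shows that the lift of any geodesic in $V_{m,n}$ must change direction at least \emph{twice} between $(0,0)$ and $(m,n)$. Since a geodesic in a flat metric can only change direction at an endpoint of the slit, and the basepoint is itself (after the usual upward translation) an endpoint of the slit, the geodesic meets endpoints of the slit at least three times; the slit has only two endpoints, so by pigeonhole some endpoint is visited twice and the curve self-intersects. This is purely combinatorial and sidesteps entirely the quantitative lower bound on which your plan depends; I would encourage you to look for an argument of this kind rather than trying to complete the convexity estimate.
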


First we need the following lemma: 

\begin{lemma}
	Let $\gamma$ be the lift to $\tilde{X}$ of a geodesic in the free homotopy class $V_{m,n}$ starting at the origin. Then $\gamma$ changes direction at least twice.
\end{lemma}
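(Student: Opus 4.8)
The plan is to show that a lift $\gamma$ of a geodesic in the class $V_{m,n}$ cannot consist of a single straight segment or of just two segments, and then to leverage this ``changes direction at least twice'' lemma to prove Proposition \ref{no simple}. First I would handle the lemma. A geodesic in $\tilde X$ is a concatenation of straight segments whose breakpoints can only occur at endpoints of the slit (this is the standard description of geodesics on a flat surface with slits — a geodesic is straight away from singularities and can only turn at the cone point, here the endpoints of the slit). If $\gamma$ were a single segment from $(0,0)$ to $(m,n)$, then since $m\rho>1$ Proposition \ref{visible} tells us it must hit the slit, a contradiction; so $\gamma$ changes direction at least once. Suppose it changes direction exactly once, at a point $p$ which is an endpoint of some slit; then $\gamma$ is the union of $[(0,0),p]$ and $[p,(m,n)]$. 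The key point is to read off from Theorem \ref{mot} the two ``constraint'' conditions depicted in Figures \ref{contrainte 1} and \ref{fig:contrainte 2}: any curve in the class $V_{m,n}$ must cross the vertical $x=b$ strictly between $(b,a+\rho)$ and $(b,a+1)$, and must cross the vertical $x=d$ strictly between $(d,c-1+\rho)$ and $(d,c)$. I would argue that a two-segment path with its single breakpoint at a slit endpoint cannot simultaneously satisfy both constraints: the breakpoint must lie to the left of $x=b$ (else the first segment starts in the wrong direction) and to the right of $x=d$... — I need to be careful here, since $b$ and $d$ need not be comparable, so I would instead phrase it as: the turning point $p=(x_0,y_0)$ is an endpoint of the slit, hence $x_0\in\ZZ$ with $0<x_0<m$, and the two constraints force the first segment $[(0,0),p]$ to already violate one of the two ``passes above the slit / below the slit'' conditions at one of the verticals $x=b$ or $x=d$ lying strictly between $0$ and $x_0$ (or to the right of $x_0$ on the second segment), because a single straight segment through a lattice point respecting a visibility-type bound at every intermediate vertical would force $b\rho\le 1$ or $d\rho\le 1$, contradicting the hypothesis; running the fractional-part computation of Section 1 / the arguments of Propositions \ref{2 parent}–\ref{1 parent} in reverse gives the contradiction.

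Once the lemma is in hand, the proof of Proposition \ref{no simple} proceeds by contradiction: suppose there is a simple geodesic $c$ in the class $V_{m,n}$, and let $\gamma$ be its lift starting at the origin, ending at $(m,n)$. By the lemma $\gamma$ has at least two breakpoints, say at slit endpoints $P_1$ and $P_2$ appearing in this order along $\gamma$. The idea is that at a turning point of a geodesic on a slit surface, the geodesic ``wraps around'' an endpoint of a slit, i.e. it passes on one side of a slit and the slit forces it to bend; translating the relevant copy of the slit (and the corresponding sub-arc of $\gamma$) by the appropriate lattice vector produces an intersection of the projected curve with itself. Concretely, I would show that the sub-arc of $\gamma$ before $P_1$ and the sub-arc after $P_2$, once projected to $X$, must cross: the turning at $P_1$ happens because $\gamma$ hugs a slit on one side, the turning at $P_2$ because it hugs a (lattice-translated) slit on the other side, and since both slits are $\ZZ^2$-translates of the same segment, the two pieces of $c$ run along opposite sides of the same slit on $X$ and therefore cannot avoid crossing — this is exactly the mechanism already used in the proof of the Hedlund-type Proposition and in the flat/strict-convexity lemmas. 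This self-intersection contradicts simplicity of $c$.

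The main obstacle I expect is making the ``two turning points force a self-intersection'' argument rigorous without a case analysis that balloons: one has to control where along $\tilde X$ the two breakpoints sit relative to the verticals $x=b$ and $x=d$, and the combinatorics of which side of which slit the geodesic passes. I would organize this by using Theorem \ref{mot} decisively — the factorization $V_{m,n}=V_{d,c}V_{b,a}=V_{b,a}tsV_{d,c}'$ pins down that a simple curve must, in order, first realize the pattern $V_{b,a}$ (staying below-and-left appropriately) and then $V_{d,c}$, so the first turning point is essentially forced to be near $(b,a)$ or $(b,a+1)$ and the last near $(m,n)-(d,c)=(b,a)$ shifted, and the non-visibility $b,d>1/\rho$ means each of the sub-paths $V_{b,a}$ and $V_{d,c}$ is itself already a non-visible class, so by descent (the inductive structure already exploited in Proposition \ref{1 parent}) each sub-path must itself turn, and iterating produces two nested turning points on the ``same'' slit. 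The cleanest route is probably: reduce to showing a geodesic in $V_{b,a}$ with $b>1/\rho$ cannot be simple by the same self-intersection-via-translation mechanism, then note $V_{m,n}$ contains $V_{b,a}$ as a sub-word and hence a simple geodesic in $V_{m,n}$ would restrict to a simple geodesic-like sub-arc realizing $V_{b,a}$, contradiction. I would flag that the delicate point — and where I would spend the most care — is justifying that a geodesic representative of a sub-word behaves like a geodesic in the sub-class, i.e. that breakpoints of the ambient geodesic are inherited, which follows because geodesics on flat surfaces with slits are locally length-minimizing and turning only at slit endpoints is a local condition.
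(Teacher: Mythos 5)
Your toolkit for the lemma is the right one, and it is essentially the paper's: geodesics on the flat slit surface are broken lines turning only at slit endpoints, the zero-turn case is excluded by non-visibility of $(m,n)$, and the two crossing constraints extracted from Theorem \ref{mot} (Figures \ref{contrainte 1} and \ref{fig:contrainte 2}) are what must exclude a single turn. But the decisive step is missing. Ruling out the one-corner configuration is the entire content of the lemma, and your justification for it is an unproved assertion: ``a single straight segment through a lattice point respecting a visibility-type bound at every intermediate vertical would force $b\rho\le 1$ or $d\rho\le 1$'' is neither a precise statement nor one that visibly follows from ``running the fractional-part computation in reverse''; you flag the real difficulty yourself (that $b$ and $d$ are not comparable a priori) without resolving it. The paper resolves it by splitting into the cases $d<b$ and $b<d$ and deriving, from $b,d>1/\rho$ and $bc-ad=1$, the explicit slope chain
$$\frac{c-1+\rho}{d}<\frac{a}{b}<\frac{n}{m}<\frac{c}{d}<\frac{a+\rho}{b}$$
in the first case (and $\frac{c-\rho}{d}<\frac{a}{b}$ in the second). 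These inequalities place the mandatory window on $x=b$ strictly above the chord from $(0,0)$ to $(m,n)$, and the mandatory window on $x=d$ strictly below the segment from the origin towards the window on $x=b$, so a broken line that hits $(0,0)$, both windows, and $(m,n)$ in the correct horizontal order must bend in two opposite senses and therefore has at least two corners. Some quantitative input of this kind is unavoidable: without these inequalities nothing prevents a two-segment path with its single corner at a slit endpoint from threading both windows, so your sketch as written does not close.

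A secondary remark: most of your proposal concerns Proposition \ref{no simple}, which is not the statement under review. For what it is worth, the paper deduces that proposition from the lemma by a short pigeonhole argument (two corners plus the basepoint give three visits to the two slit endpoints, so some endpoint is visited twice and the geodesic self-intersects), which avoids the delicate ``translate a sub-arc to produce a crossing'' and ``sub-words inherit geodesic representatives'' machinery you would need to make rigorous.
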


\begin{proof}
	Let $a/b < c/d$ be the Farey parents of $n/m$, by assumption $b,d > 1/\rho$.
	\begin{itemize}\renewcommand{\labelitemi}{$\bullet$}
		\item If $d<b$, since $d> 1/\rho$ and $bc-ad=1$, straightforward computations yield
  $$\frac{c-1+\rho}{d}<\frac{a}{b}<\frac{n}{m} <\frac{c}{d} < \frac{a+\rho}{b}$$
  so by putting together the constraints illustrated by Figure \ref{contrainte 1} and Figure \ref{fig:contrainte 2}, we obtain the following picture, on which it is clear that the lift of a curve in $V_{m,n}$ must change direction at least twice.

  \begin{center}
      \includegraphics[scale=0.7]{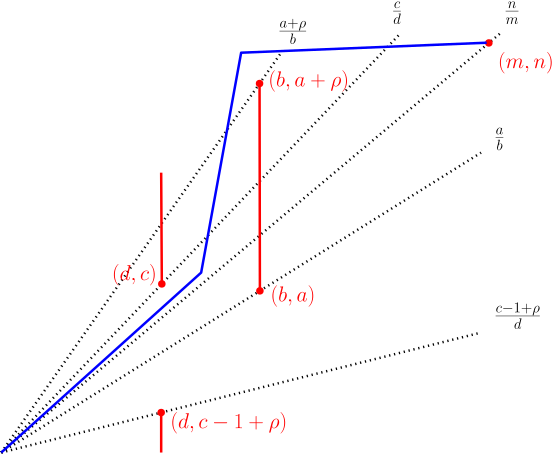}
      \captionof{figure}{The blue curve in $V_{m,n}$ must change direction twice}
      \label{change direction 1}
  \end{center}

  \item If $b<d$, similarly we get the following picture:

  \begin{center}
 \includegraphics[scale=0.7]{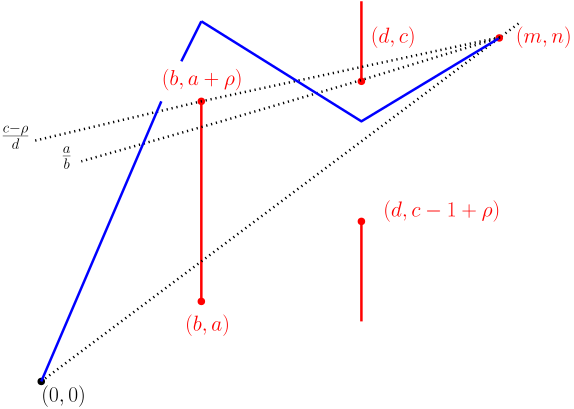}
 \captionof{figure}{Again, the blue curve in $V_{m,n}$ must change direction twice}
  \end{center}

  and since $b>1/\rho$ we have $\mathlarger{\frac{c-\rho}{d}<\frac{a}{b}}$, so the lift of a curve in $V_{m,n}$ has to change direction in order to pass under $(d,c)$.
	\end{itemize}

\end{proof}

We can now prove Proposition \ref{no simple}.

\begin{proof}
    Since the metric is flat, a geodesic on $X$ must be a broken line with angles at the slit's endpoints. Indeed, if a geodesic changes direction outside of an endpoint of the slit, then it has a corner and this is not locally length-minimizing, as it should be for a geodesic. Since we know that every lift of a geodesic in $V_{m,n}$ must change direction at least twice between $(0,0)$ and $(m,n)$, that means that any geodesic in $V_{m,n}$ passes at least three times through an endpoint of the slit: the two times we have just found, and the third time corresponding to start and finish, i.e the projection of the points $(0,0)$ and $(m,n)$. The slit has only two endpoints, hence any geodesic in $V_{m,n}$ meets an single endpoint of the slit at least twice, so it is not simple.
\end{proof}

Obviously, a minimizing curve for a homology class $h$ has to be a geodesic. Thus, if there are no simple geodesic representing $h$ this means that $h$ is minimized by a non-simple curve, and we know its length according to Proposition \ref{non simple}.
We have shown:

\begin{prop}\label{0 parent}
    Let $h=(m,n)$ be a primitive integral homology class, and let $a/b,c/d$ be the Farey parents of $n/m$. Assume that none of the Farey parents are visible, that is to say $b,d> 1/\rho$ where $\rho$ is the length of the slit. Then $\St{(m,n)}=\St{(b,a)} + \St{(d,c)}$ and the unit ball $\Bb$ of the stable norm has a flat in the direction $h$.
\end{prop}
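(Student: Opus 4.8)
The plan is to derive everything from Proposition \ref{no simple}, the computation of the shortest non-simple representative in Proposition \ref{non simple}, and the dictionary (the lemma of Section 2) between flats of $\Bb$ in a direction and the existence of a self-intersecting minimizer in that direction.

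First I would fix a minimizing curve $\delta$ for $h$: it exists by the corollary to the Hedlund argument, and since $l(\delta)=\St{h}$ it is a geodesic (a corner could be shortcut, contradicting minimality). Suppose, for contradiction, that $\delta$ is simple. By the lemma characterising primitive free homotopy classes as exactly those admitting a simple representative, the class $[\delta]$ is primitive; and by the Nielsen--Osborne--Zieschang discussion there is a \emph{unique} primitive conjugacy class whose image under abelianisation is $(m,n)$, namely the class of $V_{m,n}$. Hence $\delta$ would be a simple geodesic in $V_{m,n}$. But by hypothesis $b,d>1/\rho$, so neither Farey parent of $n/m$ is visible, and Proposition \ref{no simple} forbids a simple geodesic in $V_{m,n}$ --- a contradiction. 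Therefore every minimizing curve for $h$ has a self-intersection, and the lemma of Section 2 shows that $\Bb$ has a flat in the direction $h$.

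It remains to compute $\St{h}$. Decompose the non-simple minimizer $\delta$ at a self-intersection point as $\delta=\gamma_1\cup\gamma_2$; if either $[\gamma_i]$ vanished, deleting that loop would produce a strictly shorter representative of $h$, against minimality, so $[\gamma_1]$ and $[\gamma_2]$ are both non-trivial. Proposition \ref{non simple} then gives $\St{h}=l(\delta)\ge \St{(b,a)}+\St{(d,c)}$. For the reverse inequality I would take minimizing representatives of $(b,a)$ and $(d,c)$, translate each (as in the reduction opening Section 1) so that both pass through the lower endpoint of the slit, and form their union: since $(b,a)+(d,c)=(m,n)$ this is a closed curve representing $h$ of length $\St{(b,a)}+\St{(d,c)}$, so $\St{h}\le \St{(b,a)}+\St{(d,c)}$. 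Combining the two bounds yields $\St{(m,n)}=\St{(b,a)}+\St{(d,c)}$.

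I do not anticipate a real obstacle: the hard geometric content is already isolated in Proposition \ref{no simple}. The only steps needing slight care are the logical passage from ``$\delta$ minimizes the \emph{homology} class $h$ and is simple'' to ``$\delta$ lies in the free homotopy class $V_{m,n}$'' (which uses the uniqueness of the primitive conjugacy class lying over a primitive homology class), and checking that a non-simple minimizer really splits into two \emph{homologically non-trivial} arcs, so that the hypothesis of Proposition \ref{non simple} is met on the nose.
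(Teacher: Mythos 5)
Your proposal is correct and follows essentially the same route as the paper: a minimizer is a geodesic, Proposition \ref{no simple} (via the uniqueness of the primitive conjugacy class over $(m,n)$) rules out a simple minimizer, the lemma of Section 2 then yields the flat, and Proposition \ref{non simple} together with the triangle inequality gives $\St{(m,n)}=\St{(b,a)}+\St{(d,c)}$. You merely spell out the two inequalities and the homological non-triviality of the loops more explicitly than the paper's terse citation of Proposition \ref{non simple}.
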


\section{Stable norm of slit tori}
\subsection{Statement of the theorem}

We can finally state our main result. Putting together Proposition \ref{visible}, \ref{non simple}, \ref{2 parent}, \ref{1 parent} and \ref{0 parent} we have completely computed the stable norm of the slit torus $X$.

\begin{thm}\label{main thm}
Let $h=(m,n)\in H_1(X,\ZZ)$ be a primitive integral homology class, and let $a/b$ and $c/d$ be the Farey parents of $n/m$, with $a/b<c/d$. The unit ball $\Bb$ of the stable norm is strictly convex in the direction $h$ if and only if either

\begin{itemize}\renewcommand{\labelitemi}{$\bullet$}
\item $|m\rho| \le 1$, and in that case $\St{(m,n)}=\sqrt{m^2+n^2} = \left\| \begin{pmatrix} m \\ n \end{pmatrix} \right \|_2$  \\ \\
or
\item $|m\rho| >1$ and $(|b\rho|\le 1$ or $|d\rho|\le 1)$, and in that case $$\St{(m,n)}=\sqrt{b^2+(a+\rho)^2}+\sqrt{d^2+(c-\rho)^2}= \left \|\begin{pmatrix} b \\ a \end{pmatrix} + \begin{pmatrix} 0 \\ \rho \end{pmatrix} \right \|_2 + \left \| \begin{pmatrix} d \\ c \end{pmatrix} - \begin{pmatrix} 0 \\\rho\end{pmatrix} \right \|_2$$ 
\end{itemize}
Moreover, $\Bb$ has a vertex in those directions.
The unit ball $\Bb$ has a flat in every other direction of $H_1(X,\RR)$. In particular, if $|m|,|b|,|d|> 1/\rho$ we have $\St{(m,n)}=\St{(b,a)}+\St{(d,c)}$.
\end{thm}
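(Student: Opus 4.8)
The plan is to assemble Propositions~\ref{visible}, \ref{non simple}, \ref{2 parent}, \ref{1 parent} and~\ref{0 parent} into the full statement, and then to supply the two things those propositions leave open: the upgrade of ``$\Bb$ is strictly convex in the direction $h$'' to ``$\Bb$ has a vertex (a genuine corner) in the direction $h$'', and the extension of the ``interior of a segment'' conclusion from rational to irrational directions. Throughout I write $p(h)=h/\St h$ and $L=\lfloor 1/\rho\rfloor$, so $m\rho\le 1\iff|m|\le L$; by homogeneity and continuity of $\St{\cdot}$ it suffices to treat primitive integral $h=(m,n)$ together with the irrational directions, and since $\Bb$ is invariant under $(m,n)\mapsto(\pm m,\pm n)$ I may assume $m,n\ge 0$.

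First I would settle the formulas and the strictly-convex-versus-flat alternative at rational directions by a three-way split. If $|m|\le L$, Proposition~\ref{visible} gives $\St{(m,n)}=\sqrt{m^2+n^2}$ and $p(h)\in\SS^1$, so $\Bb\subseteq\DD$ and the strict convexity of $\SS^1$ force $\Bb$ to be strictly convex at $p(h)$. If $|m|>L$ but a Farey parent $a/b<c/d$ of $n/m$ satisfies $b\le L$ or $d\le L$, Propositions~\ref{2 parent} and~\ref{1 parent} give $\St{(m,n)}=\sqrt{b^2+(a+\rho)^2}+\sqrt{d^2+(c-\rho)^2}$ and strict convexity at $p(h)$. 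If $m,b,d>L$, Proposition~\ref{0 parent} gives $\St{(m,n)}=\St{(b,a)}+\St{(d,c)}$ and a flat at $p(h)$; since $a/b<n/m<c/d$ strictly, $p(h)$ is a \emph{strict} convex combination of $p(b,a)$ and $p(d,c)$, which by Proposition~\ref{non simple} lie on that flat, so $p(h)$ is interior to a segment of $\partial\Bb$. This gives the three displayed formulas, the stated equivalence, and the ``interior of a segment'' conclusion at every rational non-vertex direction.

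Next I would promote strict convexity to a corner. For a non-visible $h$ (Farey parents $a/b<c/d$, at least one of $b,d$ being $\le L$) the idea is to exhibit two non-degenerate segments of $\partial\Bb$ meeting at $p(h)$, one on the side of slopes $<n/m$ and one on the side of slopes $>n/m$; since $p(h)$ is already an extreme point it is interior to no segment, hence these two are not collinear and $\partial\Bb$ has a corner at $p(h)$. The segments come from applying Propositions~\ref{0 parent} and~\ref{non simple} to a shallow flat-type Farey descendant of $h$ having $h$ itself as a Farey parent: on the side of an \emph{invisible} parent, say $a/b$ with $b>L$, the class $(m+b,n+a)$ already has both Farey parents $n/m$ and $a/b$ of denominator $>L$, yielding a segment through $p(m,n)$ and $p(m+b,n+a)$; on the side of a \emph{visible} parent, say $c/d$ with $d\le L$, one descends one further step, and $(2m+d,2n+c)$ has Farey parents $n/m$ and $(n+c)/(m+d)$ of denominators $m>L$ and $m+d>L$, yielding a segment through $p(m,n)$ and $p(m+d,n+c)$ --- and when $b,d\le L$ one simply descends twice on each side.

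It remains to handle the visible directions and the irrational ones. For a visible $h$ with $m\ne 0$ the previous mechanism fails, because the extreme Farey descendants of $h$ retain the visible parent $h$ indefinitely; instead I would show $\partial\Bb$ is not differentiable at $p(h)\in\SS^1$. Picking $e\in\ZZ^2$ with $\det(h,e)=1$, the class $kh+e$ is primitive and, for $k$ large, has Farey parents $h$ (denominator $\le L$) and $(k-1)h+e$ (denominator $>L$), so Proposition~\ref{1 parent} applies and gives $\St{kh+e}=\sqrt{m^2+(n+\rho)^2}+\|(k-1)h+e-(0,\rho)\|_2$; expanding the second term as $(k-1)\|h\|_2+\langle h,e-(0,\rho)\rangle/\|h\|_2+O(1/k)$ and computing $1-\langle p(kh+e),h/\|h\|_2\rangle$, one finds this quantity equals $c_0/k+O(1/k^2)$ with $c_0>0$ --- the positivity reducing to $(n+\rho)^2>n^2$ --- so $p(kh+e)$ approaches $p(h)$ at Euclidean distance $\Theta(1/k)$ while remaining at distance $\Theta(1/k)$, not $o(1/k)$, from the tangent to $\SS^1$ at $p(h)$; hence $\partial\Bb$ has a corner at $p(h)$. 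For an irrational direction $\theta$ I would take two consecutive continued-fraction convergents $p_k/q_k$ and $p_{k+1}/q_{k+1}$: for $k$ large both denominators exceed $L$, so their mediant is of flat type, and Propositions~\ref{0 parent} and~\ref{non simple} place $p(q_k,p_k)$ and $p(q_{k+1},p_{k+1})$ on a common segment of $\partial\Bb$, in whose interior the point of direction $\theta$ lies since $\theta$ is strictly between the two slopes. I expect the vertex upgrade to be the main obstacle: the quoted propositions detect extreme points but not corners, and the visible case in particular requires reading the non-tangential approach off the explicit stable-norm formula via the short asymptotic estimate above; by comparison the reassembly of the formulas and the irrational case are routine.
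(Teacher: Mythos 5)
Your proposal is correct and follows essentially the same route as the paper: assemble Propositions \ref{visible}, \ref{non simple}, \ref{2 parent}, \ref{1 parent} and \ref{0 parent} for the formulas and the strict-convexity/flat dichotomy, then upgrade strict convexity to a vertex by exhibiting two flats meeting at an angle at $p(h)$ in the non-visible case and by the $\Theta(1/k)$ non-tangential approach of the Farey children $kh+e$ in the visible case, which is exactly the paper's $\Delta_k=(c_1/k+O(1/k^2),c_2/k+O(1/k^2))$ computation. Your two refinements --- deducing non-collinearity of the adjacent flats from extremality of $p(h)$ rather than from a direct computation, and explicitly covering irrational directions via consecutive continued-fraction convergents of large denominator --- are welcome but do not change the method.
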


\begin{proof}
    The only thing left to prove is the claim about strictly convex directions being vertices. We say that an extreme point $p$ of a convex set $C$ in the plane is a vertex if it admits infinitely many supporting hyperplanes, i.e a hyperplane that contains $p$ and such that $C$ is entirely contained in one of the two half-spaces bounded by the hyperplane. \newline 

   Let us start with the easier case. Let $(m,n)$ be a strictly convex direction of $\Bb$, and assume that $|m\rho| > 1$: this correspond to the second item in the theorem.  The direction $(m,n)$ is isolated from other strictly convex directions of the stable norm. Indeed, its slope is isolated from the slopes of other strictly convex directions, because successive Farey children of a rational are isolated from one another. Thus the point $(m,n) / \St{(m,n)}$ is a common endpoint of two flats of $\Bb$, i.e it is an endpoint of two segments contained in $\partial \Bb$. These two segments form an angle at their common endpoint $(m,n) / \St{(m,n)}$: indeed, if $p_1$ and $p_2$ denote the other two endpoints of the segments, easy computations show that $p_1, (m,n) / \St{(m,n)}$ and $p_2$ are not aligned. Thus there are infinitely many supporting lines at $(m,n) / \St{(m,n)}$, so it is a vertex of $\Bb$. \newline 

   Now, let $(m,n)$ be a visible direction. According to the previous point, we know there are infinitely many vertices of $\Bb$ that accumulate to $p:= (m,n) / \St{(m,n)}$: it might happen that there is only one supporting line at $p$, namely the line tangent to the circle at $p$. Those vertices accumulating on $p$ correspond to the Farey children of $n/m$, so they are of the form 
   $$u_k:= \frac{(\beta +km, \alpha + kn)}{\St{(\beta +km, \alpha + kn)}}$$
   on the left side of $p$ and 
   $$v_k:= \frac{(\delta +km, \gamma + kn)}{\St{(\delta +km, \gamma + kn)}}$$ on the right side of $p$, 
   where $k\in \NN^*$ and $(\beta,\alpha)$ and $(\delta,\gamma)$ are two visible classes neighbour to $(m,n)$. Let $\Delta_k = p - u_k$. Since $(m,n)$ is visible, we have $\St{(m,n)} = \sqrt{m^2 + n^2}$, and since $u_k$ corresponds to a direction that is a Farey child of $n/m$, we have
   $$\St{(\beta +km, \alpha + kn)} = \sqrt{ m^2 + (n \pm \rho)^2} + \sqrt{ (\beta + (k-1)m)^2 + (\alpha + (k-1)n \mp \rho)^2}$$
Direct computations then yields 
   $$\Delta_k = (\frac{c_1}{k} + O(1/k^2) , \frac{c_2}{k} + O(1/k^2))$$
   where $c_1$ and $c_2$ are non-zero constants depending only on $m,n,\alpha, \beta $ and $\rho$. Thus the slopes of the vectors $\Delta_k$ goes to the non-zero constant $c_2/c_1$ as $k$ goes to infinity: in particular those slopes are bounded away from zero. Hence any line passing through $p$ with a slope in between $0$ and the slopes of the $\Delta_k$ does not intersect $\Bb$ outside of $p$: it is a supporting line of $\Bb$ at $p$, thus $p$ is a vertex of $\Bb$.

   \begin{center}
       \includegraphics[scale=0.8]{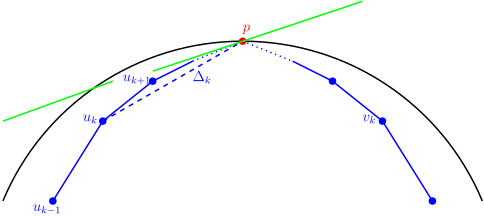}
       \captionof{figure}{The green line is a supporting line of $\Bb$ at $p$}
   \end{center}
\end{proof}

It is difficult to produce a good picture of the unit ball of the stable norm of the slit torus. Indeed, it is so close to the unit circle that they are barely distinguishable from one another by eyesight; moreover, the angles at the vertices are nearly flat so they appear almost invisible. In order to grasp the structure of $\Bb$, we flatten the arc of the unit circle between angles $0$ and $\pi /4$  onto the segment $[0,1]$, and to a rational $n/m$ we associate the distance between the unit circle $\SS^1$ and the unit ball of the stable norm $\Bb$ in the direction $(m,n)$. The points of non-differentiability of the resulting curve correspond to the strictly convex directions of $\Bb$. On the left figure, with $1/3 < \rho < 1/2$, we can clearly see the three visible directions, namely $(1,0), (2,1)$ and $(1,1)$. The right figure, with $1/6< \rho < 1/5$, highlights the accumulation of vertices on the visible direction $(1,0)$.
\begin{center}
    \includegraphics[scale=0.7]{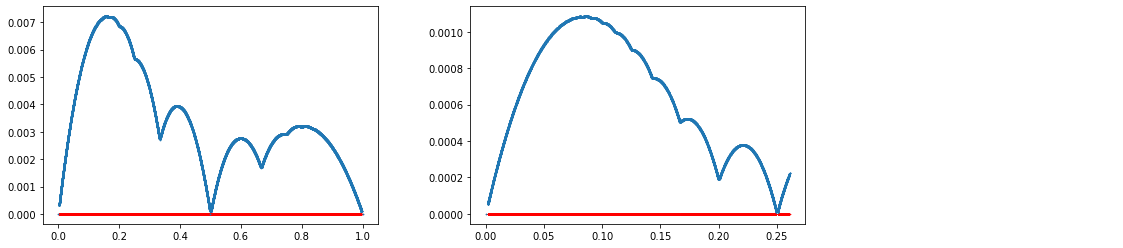}
    \captionof{figure}{Unit ball of the stable norm (blue) near an arc of the unit circle (red).}
\end{center}

\subsection{Stable norm of general slit tori}

We know how to compute to stable norm of a \emph{square} flat torus with a \emph{vertical} slit, but what about the stable norm of general slit tori? That is, flat tori whose metric comes from a general parallelogram, and with a slit of any direction.

\subsubsection{$SL(2,\RR)$-action on slit tori}

Invertible matrices act linearly on $\RR^2$ and transform the polygonal model of the flat square torus $X$ with a vertical slit (Figure \ref{modele}) into a parallelogram with a slit parallel to a side of the parallelogram. Glueing the opposites sides of the parallelogram together, we obtain a new flat torus that we denote $MX$. In general, the flat metric of $MX$ does not come from a square. More formally, an invertible matrix $M\in GL(2,\RR)$ induces a diffeomorphism $M : X \longrightarrow MX$ between the slit tori $X$ and $MX$.

\begin{center}
    \includegraphics[scale=0.8]{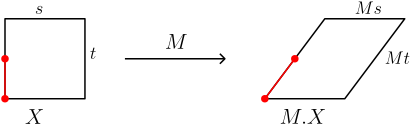}
    \captionof{figure}{Diffeomorphism between slit tori induced by $M$}
\end{center}

We want to compute the stable norm of $MX$. Since the stable norm transforms homogeneously with respect to the scaling of the surface, we can assume that $MX$ has area $1$, i.e we can only consider determinant $1$ matrices of $SL(2,\RR)$. The sides of the parallelogram modelling $MX$ give a natural basis for $H_1(MX,\ZZ)$ illustrated by the above figure. As $M$ maps the sides of the square to the sides of the parallelogram, with this choice of basis the application between homology groups $M_* : H_1(X,\ZZ) \longrightarrow H_1(MX,\ZZ)$ induced by the diffeomorphism $M$ is the identity. We get a slightly more general version of Theorem \ref{main thm}:

\begin{cor}\label{action} Let $X$ be a square torus with a vertical slit of length $\rho$. Let $M\in SL(2,\RR)$ be a determinant $1$ matrix. The unit ball $\Bb$ of the stable norm of the slit torus $MX$ has a vertex in the direction $h=(m,n)\in H_1(MX,\ZZ)$ if and only if either

\begin{itemize}\renewcommand{\labelitemi}{$\bullet$}
\item $|m\rho| \le 1$, and in that case $\|(m,n)\|_{MX}=\left \| M.\begin{pmatrix} m \\ n \end{pmatrix} \right \|_2$ \\ \\
or
\item $|m\rho| >1$ and $|b\rho|\le 1$ or $|d\rho| \le 1$, and in that case $$\|(m,n)\|_{MX}= \left \| M. \left [\begin{pmatrix} b \\ a \end{pmatrix} + \begin{pmatrix} 0 \\ \rho \end{pmatrix} \right ] \right \|_2 + \left \| M.\left [ \begin{pmatrix} d \\ c \end{pmatrix} - \begin{pmatrix} 0 \\\rho\end{pmatrix} \right ] \right \|_2$$ 
\end{itemize}
The unit ball $\Bb$ has a flat in every other direction of $H_1(MX,\RR)$. In particular, if $|m|,|b|,|d|> 1/\rho$ we have $\|(m,n)\|_{MX}=\|(b,a)\|_{MX}+\|(d,c)\|_{MX}$.

\end{cor}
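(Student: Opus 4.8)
The plan is to deduce Corollary \ref{action} directly from Theorem \ref{main thm} by transporting the stable norm through the diffeomorphism $M : X \longrightarrow MX$. The key observation is that $M$ is an isometry when we equip $MX$ with its flat metric pulled back from the parallelogram: more precisely, a curve $\gamma$ on $X$ and its image $M\gamma$ on $MX$ are related by the linear map $M$ on each lift to the abelian covering, so $l_{MX}(M\gamma) = $ the length of the path $M\tilde\gamma$ in $\widetilde{MX} = M\widetilde{X}$, where lengths are measured with the standard Euclidean metric after applying $M$. First I would make precise that the abelian covering $\widetilde{MX}$ is obtained from $\RR^2$ by removing all $\ZZ^2$-translates of the segment $M\cdot[(0,0),(0,\rho)]$, and that $M$ induces an isometry $\widetilde X \to \widetilde{MX}$ where $\widetilde X$ carries its Euclidean metric and $\widetilde{MX}$ carries the metric $g_M(v,w) = \langle Mv, Mw\rangle$; equivalently, $\widetilde{MX}$ with the pulled-back Euclidean structure.

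Next I would record that, because $M_* : H_1(X,\ZZ) \to H_1(MX,\ZZ)$ is the identity in the chosen bases, a primitive class $h = (m,n)$ on $MX$ corresponds to the primitive class $(m,n)$ on $X$, and the path-finding problem defining $\|(m,n)\|_{MX}$ is exactly the image under $M$ of the path-finding problem defining $\St{(m,n)}$ on $X$. Since $M$ maps straight segments to straight segments and preserves the combinatorial structure of which segments cross which copies of the slit (the slit on $MX$ is by construction the $M$-image of the slit on $X$, and $\ZZ^2$-translates map to $\ZZ^2$-translates), a curve is minimizing for $h$ on $MX$ if and only if it is the $M$-image of a minimizing curve for $(m,n)$ on $X$. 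In particular the conditions for strict convexity versus flatness — which depend only on the combinatorics of the Farey parents $a/b$, $c/d$ of $n/m$ and on whether $|m\rho|, |b\rho|, |d\rho|$ are $\le 1$ — are untouched by applying $M$, so the classification of vertex directions is identical. Then I would simply substitute the explicit minimizing representatives from Theorem \ref{main thm} and compute their lengths in the $g_M$-metric: the visible case gives the segment $[(0,0),(m,n)]$, whose $g_M$-length is $\|M\cdot(m,n)^{\mathsf T}\|_2$; the one-flat case gives the broken line through $(b, a+\rho)$, whose two segments translate to $M\cdot[(b,a)^{\mathsf T} + (0,\rho)^{\mathsf T}]$ and $M\cdot[(d,c)^{\mathsf T} - (0,\rho)^{\mathsf T}]$; and the flat case gives $\|(b,a)\|_{MX} + \|(d,c)\|_{MX}$ by the same additivity argument as on $X$ (or directly from $h = (b,a) + (d,c)$ and the fact that $M$ preserves the corresponding collinearity on the unit sphere).

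The main obstacle, modest as it is, is to justify cleanly that applying $M$ does not create new short curves or destroy the optimality of the ones found on $X$ — i.e. that $M$ really is a bijection on the relevant space of rectifiable paths preserving $g_M$-length, so that $\inf$'s correspond. This is essentially the statement that $M : (\widetilde X, \mathrm{eucl}) \to (\widetilde{MX}, g_M)$ is a bi-Lipschitz homeomorphism that is in fact a $g_M$-isometry, together with the fact that it respects the obstacle sets; once this is spelled out, everything else is routine substitution. I would also note the one bookkeeping point that the area-$1$ normalization lets us restrict to $M \in SL(2,\RR)$, and that for general $M \in GL(2,\RR)$ one first rescales. No genuinely new idea is needed beyond Theorem \ref{main thm}.
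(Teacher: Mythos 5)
Your strategy---transport everything through the linear map $M$, note that $M_*$ is the identity in the chosen bases, and recycle the classification of Theorem \ref{main thm}---is exactly the route the paper takes, and the formulas you extract are the right ones. The one step where your justification does not yet close is the claim that a curve is minimizing for $h$ on $MX$ if and only if it is the $M$-image of a minimizing curve for $(m,n)$ on $X$, which you support only by the facts that $M$ preserves straight segments and the obstacle combinatorics. What the isometry $M\colon(\widetilde X,g_M)\to(\widetilde{MX},\mathrm{eucl})$ actually yields is that $\|h\|_{MX}$ is the infimum of the \emph{$g_M$-lengths} of paths in $\widetilde X$ from $(0,0)$ to $(m,n)$ avoiding the slits; the curves exhibited in Theorem \ref{main thm} minimize the \emph{Euclidean} length on $\widetilde X$, a different functional, so substituting them a priori gives only an upper bound. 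To make the minimizers correspond you must check that every comparison used on $X$ to identify the Euclidean minimizer survives the linear change of coordinates: the reduction of the shortest non-simple representative to the Farey parents in Proposition \ref{non simple} (purely combinatorial and convexity-based, hence affine-invariant), the nested-triangle perimeter comparison in Propositions \ref{2 parent} and \ref{1 parent} (inclusion of convex sets is preserved by $M$, and perimeters of nested convex bodies compare for the norm $v\mapsto\|Mv\|_2$ just as for the Euclidean one), and the no-simple-geodesic argument of Proposition \ref{no simple} (geodesics of the sheared flat metric are still broken lines turning only at the slit endpoints, and the Osborne--Zieschang word constraints are topological). This itemized check is precisely the content of the paper's proof; only the visible case is genuinely automatic, since a straight segment avoiding the obstacles minimizes any flat metric obtained by a linear change of coordinates.

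One smaller point: your parenthetical alternative for the flat case---deducing $\|(m,n)\|_{MX}=\|(b,a)\|_{MX}+\|(d,c)\|_{MX}$ from the fact that ``$M$ preserves the corresponding collinearity on the unit sphere''---is not valid as stated, because the unit ball of the stable norm of $MX$ is \emph{not} the image of the unit ball of $X$ under $M$ (the paper remarks on this explicitly after the corollary). The correct route in that case is the one you list first: rerun the no-simple-geodesic argument on $MX$ and invoke the decomposition along Farey parents.
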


\begin{proof}

The proof is exactly the same as in the case of a square slit torus with a vertical slit, up to shearing all the pictures under the linear action of $M$. In fact, $M$ maps minimizing curves on $X$ to minimizing curves on $MX$. More precisely,
\begin{itemize}\renewcommand{\labelitemi}{$\bullet$}
\item As the linear action of $M$ obviously preserves straight line paths, $(m,n)\in H_1(MX,\ZZ)$ is visible on $MX$ if and only if $(m,n)\in H_1(X,\ZZ)$ is visible, that is if and only if $|m\rho|\le 1$. Hence, in that case we have $\| (m,n)\|_{MX} = \left \|\begin{pmatrix} m \\ n \end{pmatrix} \right \| _ 2$. 
\end{itemize}

 If $(m,n)$ is not visible on $MX$, with $a/b<c/d$ the Farey parents of $n/m$:

\begin{itemize}\renewcommand{\labelitemi}{$\bullet$}
\item As it was purely combinatorial, the argument used in Proposition \ref{non simple} still holds in this new setting. Hence the shortest non-simple curve representing $(m,n)$ on $MX$ has length $\|(b,a)\|_{MX}+\|(d,c)\|_{MX}$.

\item Since $M$ is a diffeomorphism, it maps bijectively simple curves on $X$ to simple curves on $MX$. Thus, if $(b,a)$ or $(d,c)$ is visible we get the shortest simple curve $\gamma$ representing $(m,n)$ in the exact same way as in Proposition \ref{2 parent} and Proposition \ref{1 parent}. It is the image by $M$ of the minimizing curve of $(m,n)$ on $X$. The action of $M$ shears Figure \ref{triangle inclus} and Figure \ref{inclus} but does not change how lengths compare to one another, so $\gamma$ is minimizing for $(m,n)$. Hence $$ \|(m,n)\|_{MX}= l(\gamma) = \left \| M. \left [\begin{pmatrix} b \\ a \end{pmatrix} + \begin{pmatrix} 0 \\ \rho \end{pmatrix} \right ] \right \|_2 + \left \| M.\left [ \begin{pmatrix} d \\ c \end{pmatrix} - \begin{pmatrix} 0 \\\rho\end{pmatrix} \right ] \right \|_2 $$

\item Again, since $M$ maps simple curves to simple curves, if none of $(b,a)$ and $(d,c)$ are visible, since there are no simple geodesics representing $(m,n)$ on $X$, there are no simple geodesics representing $(m,n)$ on $MX$. So the unit ball of the stable norm of $MX$ has a flat in the direction $(m,n)$.

\end{itemize}

\end{proof}

Remark that although the strictly convex directions are the same for $X$ and $MX$ the lengths do not transform linearly, so the unit ball of the stable norm of $MX$ is \emph{not} the image of the unit ball of $X$ under the linear action of $M$ on $H_1(X,\RR) \cong \RR^2 $. 

\subsubsection{Rational slit tori}

In this section we discuss technicalities that will greatly simplify the arguments in the next section.
We now know how to compute the stable norm of a slit torus modelled by a parallelogram, assuming the slit is parallel to one side of the parallelogram. What happens when this is not the case? This time, we will only consider the case of square tori: once this is understood, by a similar argument as in Corollary \ref{action} we will be able to get the stable norm of general parallelogram tori by applying $SL(2,\RR)$ matrices to the square tori. \newline  

So let $X'$ be a square slit torus, and assume that the slit is not parallel to any side of the square. We represent the slit with the vector $(\beta, \alpha)$: more precisely, if the lower endpoint of a lift of the slit is placed at the origin in the abelian covering $\tilde{X}'$, the upper endpoint has coordinates $(\beta,\alpha)$. Moreover, assume  $\alpha / \beta\in \QQ$, i.e the slit has rational direction. We say that $X'$ is a rational slit torus; the case of irrational tori is the object of the next section. \newline

The linear action of $SL(2,\ZZ)$ on $\RR^2$ preserves the lattice $\ZZ^2$. Thus by applying a $SL(2,\ZZ)$ matrix to $X'$ we get a parallelogram that can be rearranged into a square by cut and paste operations: these operations on the polygonal model do not change the metric of the underlying surface. In other words, the image of $X'$ under a diffeomorphism induced by an $SL(2,\ZZ)$ matrix is again a square slit torus, whose slit possibly has different slope than the one of $X'$. Since we assumed $X'$ has a slit of rational slope, by applying the adequate matrix of $SL(2,\ZZ)$ to $X'$ we can obtain a square torus with a vertical slit. Or the other way around, there exists $M\in SL(2,\ZZ)$ such that $X' = MX$ where $X$ is a square torus with a vertical slit.

\begin{center}
    \includegraphics{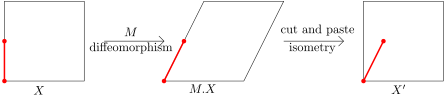}
    \captionof{figure}{Operations on $X$ leading to $X'$ }
\end{center}

Thus, rational tori are a particular case of Corollary \ref{action}. So why should we bother? The main difference between $X'$ and $MX$ is the choice of a basis for the first homology group. In both cases, the natural choice is given by the sides of their polygonal model. However, on $MX$ the slit is parallel to a side of the parallelogram, which is no longer the case of $X'$. Far from anecdotal, this fact means that on $X'$ our natural choice of a basis of the homology does not depend on the direction of the slit. This fact allows us to have a common homology basis of reference when considering several rational slit tori, as we will do in the next section. Although it may look like we did not really do anything, we can now reformulate the qualitative statements of Corollary \ref{action} on $X'$ in a intrinsic way; that is, without any reference to $X$.

\begin{cor}\label{rational}
    Let $X'$ be a rational square torus with a slit represented by the vector $(\beta, \alpha)$ with $\alpha / \beta \in \QQ$. Take the sides of the square representing $X'$ as a basis of $H_1(X',\ZZ)$. The unit ball of the stable norm of $X'$ has a vertex in the direction $(m,n)\in H_1(X',\ZZ)$ if and only if either 

\begin{itemize}\renewcommand{\labelitemi}{$\bullet$}
\item $(m,n)$ is visible, which is equivalent to $|m\alpha - n\beta| \le 1$. \\ \\
or 
\item $|m\alpha - n\beta|> 1$ and there exists $(m',n')\in H_1(X',\ZZ)$ visible such that $n'/m'$ and $n/m$ are neighbours in some Farey sequence. 

\end{itemize}
   The unit ball of the stable norm of $X'$ has a flat in all other directions of $H_1(X',\ZZ)$.
\end{cor}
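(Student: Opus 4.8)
The plan is to reduce Corollary~\ref{rational} to Corollary~\ref{action} by exhibiting the explicit change of basis. Since $\alpha/\beta \in \QQ$, write the slit vector in lowest terms as $(\beta_0,\alpha_0)$ with $\gcd(\beta_0,\alpha_0)=1$, so $(\beta,\alpha)=r(\beta_0,\alpha_0)$ for some positive rational $r$ (in fact $r\le 1$ since the slit is short). Because $\gcd(\beta_0,\alpha_0)=1$, there is a matrix $N\in SL(2,\ZZ)$ sending $(\beta_0,\alpha_0)$ to $(1,0)$; hence $N$ sends the slit direction to the vertical. First I would apply the diffeomorphism induced by $M=N^{-1}\in SL(2,\ZZ)$ so that $X'=MX$, where $X$ is the square torus with a \emph{vertical} slit of the appropriate length $\rho$; one must check $\rho$ is exactly the Euclidean length of $(\beta,\alpha)$, which it is, since $M\in SL(2,\ZZ)$ acts on $\RR^2$ preserving $\ZZ^2$ and the cut-and-paste rearranging the parallelogram into a square is an isometry of the underlying metric. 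So far this is literally the setup paragraph preceding the statement.

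Next I would translate the two conditions of Corollary~\ref{action}, which are phrased in the $X$-basis, into the $X'$-basis. The key point is that the $X'$-basis (sides of the square modelling $X'$) and the $MX$-basis (sides of the parallelogram) are related precisely by $M$: a class that is $(m,n)$ in the $X'$-basis corresponds to $M^{-1}(m,n)=N(m,n)$ in the $X$-basis. In Corollary~\ref{action} the visibility condition for a class $(\mu,\nu)$ in the $X$-basis is $|\mu\rho|\le 1$, i.e.\ $|\mu|\le 1/\rho$; geometrically this says the straight segment from $0$ to $(\mu,\nu)$ in $\tilde X$ does not meet a vertical slit, equivalently the segment stays clear of all $\ZZ^2$-translates of the length-$\rho$ vertical slit. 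Pulling this back by $M$: visibility of $(m,n)$ on $X'$ means the segment from $0$ to $(m,n)$ in $\tilde{X}'$ avoids all $\ZZ^2$-translates of the slit $[(0,0),(\beta,\alpha)]$. Since $M$ is linear and $\det M = 1$, the (signed) area of the parallelogram spanned by $(m,n)$ and the unit slit direction $(\beta_0,\alpha_0)$ equals $m\alpha_0 - n\beta_0$, and the slit has length $r\sqrt{\beta_0^2+\alpha_0^2}$; the standard computation (exactly as in the proof of Proposition~\ref{visible}, transported by $M$) shows the segment meets a translate of the slit iff some interior lattice-shift brings it within the slit, which happens iff $|m\alpha-n\beta|\le 1$. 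I would spell this out: the condition $|\mu|\le 1/\rho$ in the $X$-picture becomes, after applying $N$, precisely $|m\alpha-n\beta|\le 1$, because $N(m,n)$ has first coordinate equal (up to sign) to the determinant $\det\!\big((m,n),(\beta_0,\alpha_0)\big)=m\alpha_0-n\beta_0$, and $1/\rho = 1/(r\sqrt{\beta_0^2+\alpha_0^2})$ — so the inequality $|m\alpha_0-n\beta_0|\le 1/(r\sqrt{\beta_0^2+\alpha_0^2})\cdot(\text{correction})$ must be checked to simplify to $|m\alpha - n\beta|\le 1$; I expect this normalization bookkeeping to be the main obstacle, though it is elementary.

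For the second bullet, the condition in Corollary~\ref{action} is that $(m,n)$ is non-visible but one of the Farey parents of $n/m$ — i.e.\ one of the classes $(b,a)$, $(d,c)$ — is visible. Under $N$, Farey parents of the direction $(m,n)$ in the $X$-basis correspond to classes $(m',n')$ in the $X'$-basis such that $N(m',n')$ is a Farey parent of $N(m,n)$; since $N\in SL(2,\ZZ)$ sends consecutive Farey fractions (pairs with determinant $1$) to pairs with determinant $1$, and "being Farey neighbours" is equivalent to the determinant-$1$ condition (Proposition on Farey pairs, item~1), this is exactly the statement that $n'/m'$ and $n/m$ are neighbours in some Farey sequence. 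So the $X$-side condition "one Farey parent visible" becomes "there is a visible $(m',n')\in H_1(X',\ZZ)$ with $n'/m'$ a Farey neighbour of $n/m$." The flat case (none of the Farey parents visible) is the complement, so it follows automatically. I would close by remarking that the lengths themselves are given by Corollary~\ref{action} applied with $M$, but the intrinsic qualitative statement is all that is needed downstream, which is why we only record the vertex/flat dichotomy here.
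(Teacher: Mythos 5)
Your overall architecture is the same as the paper's: reduce to Corollary~\ref{action} via an $SL(2,\ZZ)$ change of basis, identify visibility on $X'$ with visibility on the vertical-slit model, and use the fact that $SL(2,\ZZ)$ preserves the determinant-$1$ (Farey neighbour) relation to translate ``one Farey parent visible'' into ``has a visible Farey neighbour''. The second half of your argument is essentially the paper's. But there is a concrete error in the first half: you assert that the length $\rho$ of the vertical slit on $X$ equals the Euclidean length of $(\beta,\alpha)$, ``since $M\in SL(2,\ZZ)$ \dots and the cut-and-paste \dots is an isometry.'' The cut-and-paste $MX\to X'$ is indeed an isometry, but the map $X\to MX$ is the \emph{linear action} of $M$, which preserves areas, not lengths. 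The correct relation is $M\cdot(0,\rho)^{T}=(\beta,\alpha)^{T}$, so with your notation $(\beta,\alpha)=r(\beta_0,\alpha_0)$ one gets $\rho=r=\|(\beta,\alpha)\|_2/\|(\beta_0,\alpha_0)\|_2$, not $\|(\beta,\alpha)\|_2$. With your value of $\rho$ the ``normalization bookkeeping'' you defer does \emph{not} close: the condition $|m\alpha_0-n\beta_0|\le 1/\rho$ becomes $|m\alpha-n\beta|\cdot\sqrt{\beta_0^2+\alpha_0^2}\le 1$, off by the factor $\sqrt{\beta_0^2+\alpha_0^2}$. Since you flag precisely this step as ``the main obstacle,'' the error is load-bearing.

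The clean way to avoid the bookkeeping altogether is to note that the visibility criterion $|\mu\rho|\le 1$ on the vertical-slit torus is exactly $\bigl|\det\bigl((\mu,\nu),(0,\rho)\bigr)\bigr|\le 1$, the area of the parallelogram spanned by the class and the slit vector; a determinant-$1$ map preserves this area, giving $|m\alpha-n\beta|\le 1$ immediately. (The paper instead writes $M$ explicitly as $\bigl(\begin{smallmatrix} v & q\\ u & p\end{smallmatrix}\bigr)$ with $(\beta,\alpha)=\rho(q,p)$ and computes $M^{-1}(m,n)$ by hand.) Two smaller points: your $N$ should send the primitive slit direction to $(0,1)$, not $(1,0)$, if the first coordinate of $N(m,n)$ is to be the relevant determinant; and in the converse direction of the second bullet you should say a word about why a non-visible class with \emph{some} visible Farey neighbour necessarily has a visible Farey \emph{parent} on the vertical-slit model --- this holds because visible classes there have first coordinate at most $1/\rho<|m|$, and a neighbour of $n/m$ with smaller denominator is a parent.
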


\begin{proof}
We know there exists a square torus $X$ with a vertical slit of length $\rho$ and a matrix $M\in SL(2,\ZZ)$ such that $X'$ is isometric to $MX$.
Note that the cut and paste operation from $MX$ to $X'$ is an isometry $\varphi : MX \longrightarrow X'$ whose induced map in homology $\varphi_* : H_1(MX,\ZZ) \longrightarrow H_1(X',\ZZ)$ acts as $M^{-1}$. Thus we only need to apply $M^{-1}$ to the homology classes mentioned in the statements of Corollary \ref{action} in order to get the result. 
\newline

Hence, a class $(m,n)$ is visible on $X'$ if and only if $M^{-1} \begin{pmatrix} m \\ n \end{pmatrix}$ is visible on $MX$ (or equivalently on $X$). We know $M$ sends the slit $\begin{pmatrix} 0 \\ \rho \end{pmatrix}$ of $X$ to the slit $\begin{pmatrix}
    \beta \\ \alpha
\end{pmatrix}$ of $X'$. But $\alpha / \beta$ is rational so there exist $p,q\in \ZZ$ with $gcd(p,q)=1$ such that $\alpha / \beta = p/ q$. Thus $M$ is of the form $\begin{pmatrix}
    v & q \\ u & p 
\end{pmatrix}$ with $u,v\in \ZZ$, and $M^{-1}$ is of the form $\begin{pmatrix}
    p & -q \\ -u & v
\end{pmatrix}$.
We have 
$$M^{-1} \begin{pmatrix} m \\ n \end{pmatrix} = \begin{pmatrix}
    mp-nq \\ vn-um
\end{pmatrix}$$

so by Corollary \ref{action}, $M^{-1} \begin{pmatrix} m \\ n \end{pmatrix}$ is visible on $MX$ if and only if $|(mp-nq)\rho| \le 1$, that is to say if and only if $|m\alpha - n\beta|\le 1$ since $M\begin{pmatrix} 0 \\ \rho \end{pmatrix} = \begin{pmatrix} \beta \\ \alpha \end{pmatrix}$. \newline

If a class $(m,n)$ is not visible on $X'$, it is a strictly convex direction of the unit ball of the stable norm of $X'$ if and only if $M^{-1} \begin{pmatrix} m \\ n \end{pmatrix}= \begin{pmatrix}
    mp-nq \\ vn-um
\end{pmatrix}$ is a strictly convex direction of the unit ball of the stable norm of $MX$.
This is the case if and only if at least one of the Farey parents of $(mp-nq)/(vn-um)$ is visible. In particular this Farey parent, denoted $n''/m''$, is neighbour to $(mp-nq)/(vn-um)$ in some Farey sequence, i.e $n''(vn-um) - m''(mp-nq)=1$ or in other words
$$\det \begin{pmatrix}
    m'' & mp-nq \\ n'' & vn-um
\end{pmatrix} = 1$$
Going back to $X'$ by applying $M$, let $\begin{pmatrix}
    m' \\ n'
\end{pmatrix} = M\begin{pmatrix}
    m'' \\ n''
\end{pmatrix}$. Since $\det M =1$ we have

$$\det \begin{pmatrix} m' & m \\ n' & n \end{pmatrix} = \det \left [ M. \begin{pmatrix}
    m'' & mp-nq \\ n'' & vn-um
\end{pmatrix} \right ] = 1$$
so $n/m$ and $n'/m'$ are Farey neighbours, hence the result. Note that even though $n'/m'$ is neighbour to $n/m$, it is not necessarily a Farey parent of $n/m$. \newline 

Finally, if $(m,n)$ does not have any visible neighbour on $X'$, then it is also the case for $M^{-1} \begin{pmatrix} m \\ n \end{pmatrix}$ on $MX$. In particular, the Farey parents of $(mp-nq)/(vn-um)$ are not visible: by Corollary \ref{action}, the unit ball of the stable norm of $MX$ has a flat in the direction $M^{-1} \begin{pmatrix} m \\ n \end{pmatrix}$, and so do the unit ball of the stable norm of $X'$ in the direction $(m,n)$.

\end{proof}

Note that contrary to Corollary \ref{action} we did not provide any formulas for the stable norm. The reason for that is that the quantitative statements of this Corollary involve references to the Farey parents of a rational number, but the notion of Farey parents is \emph{not} preserved under the action of $M$. Thus, it is not possible to give a statement about the stable norm of $X'$ that is both \emph{quantitative} and \emph{intrinsic}, as we would need to pull things back to $X$ in order to talk about Farey parents without any ambiguity.

\subsubsection{Irrational slit tori}

Let $X'$ be a square slit torus whose slit is represented by the vector $(\beta,\alpha)$. We denote $L= \sqrt{\beta^2 + \alpha^2}$ the length of the slit of $X'$. Suppose that $X'$ is irrational, i.e $\alpha/\beta \in \RR \backslash\QQ$.
This time, it is not possible to see $X'$ as the image of a square torus with a vertical slit under the action of some matrix of $SL(2,\ZZ)$. \newline 

The idea is to approximate $X'$ with a sequence $(X_k)_{k\in \NN}$ of rational square slit tori, with a slit of length $L$ and of rational slope $p_k/q_k$, with $p_k,q_k\in \ZZ$, such that $p_k/q_k\longrightarrow \alpha/\beta$ as $k$ goes to infinity.
For $k$ big enough, one expects $X_k$ to be close to $X'$ in any reasonable topology; we claim this is true in particular in the Hausdorff-Lipschitz topology. Gutkin and Massart showed (Lemma 5 of \cite{Gut-Mas}) that the stable norm is continuous with respect to the Hausdorff-Lipschitz topology. Hence the unit ball $\Bb$ of the stable norm of $X'$ is the limit of the unit ball $\Bb_k$ of the stable norm of $X_k$, which is described by Corollary \ref{rational}. This, however, is not fully satisfying for two reasons. First, as we mentioned earlier, it is rather tedious to produce explicit formulas for the stable norm of $X_k$, let alone taking their limit when $k$ goes to infinity. Second and most importantly, we do not get a combinatorial description of the unit ball of the stable norm of $X'$. Indeed, many things can happen when taking a limit of convex sets: for instance, the regular $n$-gon becomes a circle when $n$ goes to infinity, but the combinatorial geometries of the $n$-gon and of the circle are very different. \newline 

\paragraph{Visible classes.}
In order to describe the qualitative geometry of the unit ball $\Bb$ of the stable norm of $X'$ we will proceed as in the case of rational slit tori: we find what the visible classes are, and among non-visible classes we determine which classes correspond to a strictly convex direction of $\Bb$ and which classes lie inside a flat of $\Bb$.

\begin{prop}
    Let $(m,n)\in H_1(X',\ZZ)$ be a primitive integral homology class. Recall that the slit is represented by the vector $(\beta,\alpha)$, with $\alpha/\beta \in \RR\backslash \QQ$. Then $(m,n)$ is visible if and only if $|m\alpha -n\beta| \le 1$.
\end{prop}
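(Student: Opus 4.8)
The plan is to reduce the question to a statement about straight-line paths in the abelian covering $\tilde{X}'$, exactly as in the vertical-slit case (Proposition \ref{visible}), and then to translate the combinatorial condition ``the segment avoids all copies of the slit'' into the arithmetic inequality $|m\alpha - n\beta| \le 1$. Recall that a primitive class $(m,n)$ is visible precisely when the straight segment from $(0,0)$ to $(m,n)$ in $\tilde{X}'$ misses every $\ZZ^2$-translate of the slit; when this happens the segment projects to a simple closed geodesic of Euclidean length, which is obviously minimizing, and conversely if the segment meets a copy of the slit then (by the same perturbation argument as in Proposition \ref{visible}) the stable norm is strictly larger than the Euclidean norm, so the class is not visible.

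First I would set up coordinates: the lifts of the slit are the half-open segments $\{(p,q) + s(\beta,\alpha) : s \in [0, 1)\}$ for $(p,q) \in \ZZ^2$ (after rescaling so that the slit vector is $(\beta,\alpha)$ with the understanding that its Euclidean length is $L$; the homogeneity of the stable norm lets me normalise as convenient, but the cleanest bookkeeping is to keep the slit as the primitive-direction vector scaled to length $L$ and compare against $1$). The segment from the origin to $(m,n)$ is the set $\{t(m,n) : t \in [0,1]\}$. An intersection of this segment with the copy of the slit based at $(p,q)$ means $t(m,n) = (p,q) + s(\beta,\alpha)$ for some $t \in [0,1]$ and $s \in [0,1)$. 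Taking the determinant against $(\beta,\alpha)$ kills the $s(\beta,\alpha)$ term and gives $t\,(m\alpha - n\beta) = p\alpha - q\beta$, i.e. $t = (p\alpha - q\beta)/(m\alpha - n\beta)$ (note $m\alpha - n\beta \ne 0$ since $\alpha/\beta$ is irrational and $(m,n)\ne(0,0)$). So the question becomes: does there exist $(p,q) \in \ZZ^2$ with $0 < (p\alpha-q\beta)/(m\alpha-n\beta) \le 1$ and with the corresponding $s \in [0,1)$? Since $\{p\alpha - q\beta : (p,q) \in \ZZ^2\}$ is a dense (in fact, a rank-two subgroup, but what matters is density) subset of $\RR$ by irrationality of $\alpha/\beta$, the set of achievable values of $t$ in $(0,1]$ is dense; the genuinely delicate point is the constraint $s \in [0,1)$ together with requiring $t \le 1$ strictly, which is what produces the sharp threshold $|m\alpha - n\beta| \le 1$ rather than $< 1$ or something coarser.

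The key step is therefore the following dichotomy. If $|m\alpha - n\beta| > 1$: I would exhibit an explicit offending lattice point. Because the values $p\alpha - q\beta$ are dense and the subgroup they form contains arbitrarily small positive elements, one can choose $(p,q)$ so that $p\alpha - q\beta$ lies strictly between $0$ and $\min(1, |m\alpha-n\beta|) = 1$ (the segment crosses the vertical line through $x = p$, and the relevant ``height discrepancy'' mod the slit works out because $|m\alpha-n\beta|>1$ leaves enough room); checking that the accompanying $s$ lands in $[0,1)$ is the short computation $s = (\text{something})/(m\alpha-n\beta)$ and uses $|m\alpha-n\beta|>1$ in the denominator to force $|s| < 1$, after adjusting $(p,q)$ by an integer translate along the slit direction if necessary. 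Conversely, if $|m\alpha - n\beta| \le 1$: I would show no lattice point works, i.e. for every $(p,q)$ either $t \notin (0,1]$ or $s \notin [0,1)$. Here the inequality $|m\alpha-n\beta|\le 1$ is used to show that whenever $t \in (0,1]$, the value of $p\alpha - q\beta = t(m\alpha-n\beta)$ has absolute value at most $1$, which pins down $(p,q)$ so tightly (essentially $p\alpha - q\beta$ must equal $m\alpha - n\beta$ up to the lattice, forcing $(p,q) = (m,n)$ or an endpoint) that the only ``intersections'' are at the endpoints $t = 0$ or $t = 1$, which do not count. I expect this converse direction — carefully ruling out all lattice points and handling the boundary cases $t=1$, $s=0$ correctly so the inequality comes out non-strict — to be the main obstacle; the forward direction is a routine density argument. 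Finally, once visibility is characterised combinatorially, the stable-norm value $\|(m,n)\|_{X'} = \|(m,n)\|_2$ in the visible case follows verbatim from the argument of Proposition \ref{visible}, and the continuity of the stable norm under the Hausdorff–Lipschitz topology (Lemma 5 of \cite{Gut-Mas}) can serve as an independent sanity check against the rational approximants $X_k$ of Corollary \ref{rational}, since $|m p_k - n q_k|\,\rho_k \to |m\alpha - n\beta|$ under the normalisation $\rho_k = L/\sqrt{p_k^2+q_k^2}$.
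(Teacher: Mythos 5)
Your reduction to a lattice problem in the abelian cover is a sound setup, and the forward direction can be made to work, but the converse --- which you yourself flag as the main obstacle --- is argued incorrectly as written. You claim that when $|m\alpha-n\beta|\le 1$ and $t\in(0,1]$, the condition $|p\alpha-q\beta|=|t(m\alpha-n\beta)|\le 1$ ``pins down $(p,q)$ so tightly'' that only the endpoints survive. It does not: since $\alpha/\beta$ is irrational, $\{p\alpha-q\beta:(p,q)\in\ZZ^2\}$ is a \emph{dense} subgroup of $\RR$, so there are infinitely many lattice points with $|p\alpha-q\beta|$ arbitrarily small, hence infinitely many candidate values of $t$ in $(0,1)$; nothing is pinned down, and ``$p\alpha-q\beta$ must equal $m\alpha-n\beta$ up to the lattice'' has no meaning for a dense subgroup. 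The constraint that actually produces the threshold is the one you left as ``$s=(\text{something})/(m\alpha-n\beta)$'': eliminating $t$ from $t(m,n)=(p,q)+s(\beta,\alpha)$ by taking the determinant against $(m,n)$ gives $s=(pn-qm)/(m\alpha-n\beta)$, whose numerator is an \emph{integer}. An interior intersection with a (open) copy of the slit needs $s\in(0,1)$, i.e.\ a nonzero integer of absolute value strictly less than $|m\alpha-n\beta|$; this is impossible precisely when $|m\alpha-n\beta|\le 1$. Conversely, when $|m\alpha-n\beta|>1$ one takes $pn-qm=\pm 1$ (solvable since $\gcd(m,n)=1$) and shifts $(p,q)$ by a multiple of $(m,n)$ --- which changes $t$ by an integer and leaves $s$ fixed --- to land $t$ in $(0,1)$. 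With that repair your argument closes both directions; as written, the converse does not, and the density of $\{p\alpha-q\beta\}$, which you lean on, is actually irrelevant to the mechanism.

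For comparison, the paper avoids the lattice computation entirely: it observes that with respect to lines of slope $n/m$ the slit $(\beta,\alpha)$ casts the same shadow as a \emph{vertical} slit of length $|\alpha-\tfrac{n}{m}\beta|$ (the band bounded by the two parallels of slope $n/m$ through the slit's endpoints traps any line of that slope that enters it), which reduces the statement verbatim to Proposition \ref{visible} and gives visibility iff $|m|\cdot|\alpha-\tfrac{n}{m}\beta|\le 1$, i.e.\ $|m\alpha-n\beta|\le 1$. Your route, once the integrality of $pn-qm$ is put in, is a legitimate self-contained alternative that does not rely on the earlier proposition, but it needs that key observation to go through.
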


\begin{proof}
Note that this is the same condition that we found for rational tori in Corollary \ref{rational}, but this time we cannot take things back to a torus with a vertical slit that we already understand. \newline

By definition, the class $(m,n)$ is visible if it is possible to go in a straight line from $(0,0)$ to $(m,n)$ in the abelian covering of $X'$. In the square model of $X'$, draw a line of slope $n/m$ starting from the bottom left corner of the square, that is, the lower endpoint of the slit. Draw the parallel line passing through the upper endpoint of the slit; this line intersects the left side of the square at height $\alpha - \mathlarger{\frac{n}{m}}\beta$. These two lines bound a band containing the slit. Now, a line of slope $n/m$ in the abelian cover of $X'$ intersects the slit if and only if its projection to $X'$ enters this band at any point. Since the projection of any line of slope $n/m$ is parallel to this band, if it enters the band at any point it means it is trapped inside the band forever; in particular, it intersects the left side of the square inside of the band, i.e at height \emph{below} $\alpha - \mathlarger{\frac{n}{m}}\beta$. 

\begin{center}
    \includegraphics[scale=0.7]{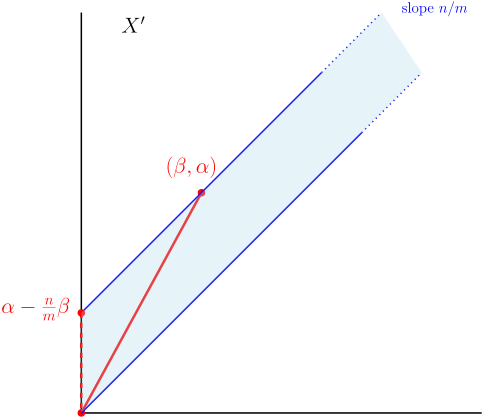}
    \captionof{figure}{The slit $(\beta,\alpha)$ casts the same shadow as a vertical slit of length $\alpha - \mathlarger{\frac{n}{m}}\beta$}
\end{center}

So the problem comes back to a vertical slit torus: more precisely, the class $(m,n)$ is visible on $X'$ if and only if it is visible on a square slit torus with a \emph{vertical} slit of length $\alpha - \mathlarger{\frac{n}{m}}\beta$. By Proposition \ref{visible}, we know this is equivalent to 
$$ |m.(\alpha - \mathlarger{\frac{n}{m}}\beta)|\le 1$$
i.e to $|m\alpha -n\beta| \le 1$.
\end{proof}

Remark that the quantity $|m\alpha -n\beta|$ is the area of the forbidden band, i.e the size of the shadow cast by the slit on $X'$ when hit by parallel light rays of slope $n/m$. Since $X'$ has area $1$, the condition $|m\alpha -n\beta|\le 1$ simply means that this shadow does not cover the entire torus, so there is still room for straight line curves representing $(m,n)$ to exist.

\begin{cor}
    Integral visible classes are isolated.
\end{cor}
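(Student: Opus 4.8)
The plan is to prove that the set of visible directions has a single accumulation point, the direction of the slit, and that this direction is never the direction of an integral class; since $\alpha/\beta \in \RR \setminus \QQ$, no nonzero integer vector is proportional to $(\beta,\alpha)$, so the slit direction is not visible, and consequently every visible direction — hence, in the sense used throughout, every visible class — is isolated. (Here ``isolated'' is read in terms of directions: the claim is that each visible direction is an isolated point of the set of all visible directions, just as with the Farey children in the proof of the main theorem.)

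The only computation needed is the following. Write $L = \sqrt{\beta^2 + \alpha^2}$ for the length of the slit. For a primitive class $(m,n)$, set $r = \|(m,n)\|_2$ and let $\phi$ be the angle between the vectors $(m,n)$ and $(\beta,\alpha)$. The elementary identity for the determinant of two vectors in $\RR^2$ gives $|m\alpha - n\beta| = rL\,|\sin\phi|$, so the visibility condition $|m\alpha - n\beta| \le 1$ is exactly $|\sin\phi| \le 1/(rL)$. In other words, a visible class of large Euclidean norm must point in a direction very close to that of the slit.

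Now I would argue by contradiction. Suppose some visible direction $\theta_0$ is not isolated among visible directions. Then every neighbourhood of $\theta_0$ contains infinitely many visible directions: if some neighbourhood $U$ of $\theta_0$ met the visible directions in a finite set, then shrinking $U$ so as to avoid the finitely many points of that set distinct from $\theta_0$ would exhibit $\theta_0$ as isolated. Choose primitive visible classes $(m_k,n_k)$ realizing pairwise distinct directions $\theta_k$ with $\theta_k \to \theta_0$. Pairwise distinct directions means the $(m_k,n_k)$ are pairwise non-proportional, hence pairwise distinct lattice points, so — since each Euclidean ball contains only finitely many lattice points — $r_k := \|(m_k,n_k)\|_2 \to \infty$. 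By the identity above, $|\sin\phi_k| \le 1/(r_k L) \to 0$, so $\phi_k \to 0$ or $\pi$; that is, $\theta_k$ converges to the direction of $(\beta,\alpha)$. Together with $\theta_k \to \theta_0$, this forces $\theta_0$ to be the slit direction, which cannot be visible. This contradiction finishes the proof.

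There is essentially no obstacle here: everything reduces to the estimate $|\sin\phi| \le 1/(rL)$ extracted directly from the visibility inequality, combined with the discreteness of $\ZZ^2$. The one point to state with care is that a non-isolated visible direction produces infinitely many visible classes of unbounded Euclidean norm clustering around it — that is precisely what makes the estimate bite — and once that is observed the conclusion is immediate.
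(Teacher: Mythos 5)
Your proof is correct and follows essentially the same route as the paper: both arguments rest on the observation that the visibility condition forces a class of large norm to point in a direction close to the slit (your $|\sin\phi|\le 1/(rL)$ is just the angular form of the paper's $|\alpha/\beta - n/m|\le 1/(\beta m)$), so the only possible accumulation point of visible directions is the slit direction, which is irrational and hence not realized by an integral class. The paper argues directly by bounding the denominators of visible classes near a fixed $(m,n)$, while you argue by contradiction, but the content is the same.
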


\begin{proof}
    Let $(m,n)$ be an integral visible class, i.e $|m\alpha -n\beta| \le 1$. This implies $$\left |\frac{\alpha}{\beta} - \frac{n}{m} \right | \le \frac{1}{\beta m} $$

    So for $m$ fixed, there is only up to a finite number of visible classes. Moreover, since $\alpha/\beta$ is irrational, there exists a constant $c(m)$ that depends only on $m$ such that
    $$0 < c(m) \le \left |\frac{\alpha}{\beta} - \frac{n}{m} \right | \le \frac{1}{\beta m} $$
    But $1/\beta q$ goes to $0$ as $q$ goes to infinity, thus there is only a finite number of visible classes $(q,p)$ such that $| \alpha/\beta - p/q| \ge c(m)$, i.e there is only a finite number of visible classes $(q,p)$ that may be close to $(m,n)$. So $(m,n)$ is isolated.
    
\end{proof}

Now we would like to characterize visible classes on $X'$ through the approximation by rational tori $X_k$. More precisely, let $X_k$ be the square torus with a slit of length $L=\sqrt{\beta^2+\alpha^2}$ and of slope $p_k/q_k\in \QQ$.  Moreover, assume that $p_k/q_k$ is the $k$-th convergent to the continued fraction expansion of $\alpha/\beta$, i.e if $\alpha / \beta$ has (unique) infinite continued fraction expansion $[a_0,a_1,...,a_n,...]$ then $p_k/q_k= [a_0,a_1,...,a_k]$.

\begin{prop}\label{limit}
    A class $(m,n)$ is visible on $X'$ if and only if $(m,n)$ is visible on infinitely many $X_k$'s.
\end{prop}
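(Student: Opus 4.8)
The plan is to translate both conditions into the arithmetic inequality $|m\alpha - n\beta| \le 1$ and then compare with the analogous inequality on each $X_k$, namely $|m p_k - n q_k| \le 1$ (by the visibility criterion applied to a slit torus with slit vector $(q_k, p_k)$, after normalising the area — note $X_k$ has a slit of length $L$, so the relevant quantity is the area of the shadow band, which scales correctly). By the previous proposition, $(m,n)$ is visible on $X'$ iff $|m\alpha - n\beta| \le 1$, and visible on $X_k$ iff $|m p_k - n q_k| \le 1$ (possibly after rescaling $X_k$ to area $1$; this only multiplies the quantity by a fixed positive constant, which I will absorb). So the statement reduces to: $|m\alpha - n\beta| \le 1$ if and only if $|m p_k - n q_k| \le 1$ for infinitely many $k$.

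First I would handle the easy direction. If $(m,n)$ is \emph{not} visible on $X'$, then $|m\alpha - n\beta| = 1 + \delta$ for some $\delta > 0$. Since $p_k/q_k \to \alpha/\beta$ and the denominators $q_k \to \infty$, we have $m p_k - n q_k \to m\alpha - n\beta$ after suitable normalisation; more carefully, $|m p_k - n q_k| \to |m\alpha - n\beta|$ is false as stated because $p_k, q_k$ grow, so I should instead write $|m p_k - n q_k|$ in terms of $q_k|m\frac{p_k}{q_k} - n|$ and use $|\alpha/\beta - p_k/q_k| < 1/(q_k q_{k+1})$. Hmm — this needs care: the correct normalisation is that $X_k$ has slit \emph{vector} $(q_k', p_k')$ where $(q_k', p_k')$ is $(q_k, p_k)$ rescaled so the slit has length $L$; then $m p_k' - n q_k' = \frac{L}{\sqrt{p_k^2+q_k^2}}(m p_k - n q_k)$, and since $m p_k - n q_k$ is an integer while the prefactor shrinks like $1/q_k$, the only way $|m p_k' - n q_k'| \le 1$ can hold for infinitely many $k$ is if $m p_k - n q_k = 0$ for infinitely many $k$ (impossible unless $n/m = \alpha/\beta$, excluded by irrationality) or if it stays bounded; the continued-fraction estimate $|m p_k - n q_k| = q_k|m \alpha/\beta - n| + O(1/q_{k+1}) \cdot q_k$ shows $|m p_k' - n q_k'| \to |m\alpha - n\beta| = |\text{(shadow area on } X')|$. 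So both the "visible" and "not visible" directions follow once this limit is established rigorously.

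The main obstacle, then, is getting the bookkeeping of the rescaling exactly right and extracting the limit $|m p_k' - n q_k'| \to |m\alpha - n\beta|$ cleanly from the continued-fraction convergence $p_k/q_k \to \alpha/\beta$. Concretely: write $p_k' = \lambda_k p_k$, $q_k' = \lambda_k q_k$ with $\lambda_k = L/\sqrt{p_k^2 + q_k^2}$; then $m p_k' - n q_k' = \lambda_k q_k (m p_k/q_k - n)$, and $\lambda_k q_k = L q_k/\sqrt{p_k^2+q_k^2} = L/\sqrt{(p_k/q_k)^2 + 1} \to L/\sqrt{(\alpha/\beta)^2 + 1} = L\beta/\sqrt{\alpha^2+\beta^2} = \beta$ (using $L = \sqrt{\alpha^2+\beta^2}$), while $m p_k/q_k - n \to m\alpha/\beta - n$; hence $m p_k' - n q_k' \to \beta(m\alpha/\beta - n) = m\alpha - n\beta$. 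With this limit in hand: if $|m\alpha - n\beta| < 1$ the quantity is eventually $< 1$, so $(m,n)$ is visible on all but finitely many $X_k$; if $|m\alpha - n\beta| > 1$ it is eventually $> 1$, so visible on only finitely many. The boundary case $|m\alpha - n\beta| = 1$ cannot occur since $\alpha/\beta$ is irrational and $m, n$ are integers not both zero, so $m\alpha - n\beta$ is irrational, in particular $\ne \pm 1$. This disposes of every case and completes the proof.
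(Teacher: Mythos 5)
Your reduction to the parallelogram-area inequality and the limit computation $mp_k'-nq_k'=\lambda_k q_k\,(m\,p_k/q_k-n)\to m\alpha-n\beta$ are correct and match the paper's approach (the paper phrases it as the convergence of the area of the parallelogram spanned by $(m,n)$ and the slit of $X_k$ to that of the parallelogram spanned by $(m,n)$ and $(\beta,\alpha)$). The cases $|m\alpha-n\beta|<1$ and $|m\alpha-n\beta|>1$ are then handled exactly as you do.

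However, your dismissal of the boundary case is wrong, and this is a genuine gap. You claim that $\alpha/\beta\notin\QQ$ forces $m\alpha-n\beta$ to be irrational, hence $\neq\pm1$. That implication fails: irrationality of the \emph{ratio} $\alpha/\beta$ does not prevent an integer linear combination of $\alpha$ and $\beta$ from equalling $1$. For instance $\alpha=\sqrt2$, $\beta=\sqrt2-1$ gives $\alpha/\beta=2+\sqrt2\notin\QQ$ while $\alpha-\beta=1$, so the class $(m,n)=(1,1)$ satisfies $|m\alpha-n\beta|=1$ and is visible on $X'$ by definition. For such a class your limit argument only yields $|mp_k'-nq_k'|\to1$, which is compatible with the quantity being $>1$ for all large $k$, i.e.\ with $(m,n)$ being visible on only finitely many $X_k$ --- so the forward implication of the proposition is not established in this case. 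The paper treats this case separately: since consecutive continued-fraction convergents lie on alternating sides of $\alpha/\beta$, one can choose a parity of $k$ for which $p_k/q_k$ lies between $n/m$ and $\alpha/\beta$; for those $k$ the parallelogram spanned by $(m,n)$ and the slit of $X_k$ has area \emph{strictly} less than $1$, which produces infinitely many (though not cofinitely many) indices $k$ with $(m,n)$ visible on $X_k$. Note that this is precisely why the proposition is stated with ``infinitely many'' rather than ``all but finitely many''. You need to add such an argument to close the proof.
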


Note that since all considered slit tori are generated by a square we have a common natural basis for their homology, so it makes sense to pick a homology class $(m,n)$ and to discuss and compare its properties "on $X'$" and "on $X_k$".

\begin{proof}
    Suppose that $(m,n)$ is visible on $X'$, by definition we have $|m\alpha - n\beta| \le 1$. \\ First, assume $|m\alpha - n\beta| < 1$. The quantity $|m\alpha - n\beta|$ is the area of the parallelogram generated by the vector $(m,n)$ and the slit of $X'$, represented by the vector $(\beta,\alpha)$. On $X_k$, the slit has length $L$ and slope $p_k/q_k$, thus it is represented by the vector $\mathlarger{\frac{L}{q_k^2+p_k^2}}.(q_k,p_k)$. Then by Corollary \ref{rational}, the class $(m,n)$ is visible on $X_k$ if and only if 
    $$ \frac{L}{q_k^2+p_k^2} |m p_k - nq_k| \le 1$$
    that it to say if and only if the parallelogram generated by the vector $(m,n)$ and the slit of $X_k$ has area less than $1$. But by construction, the slit of $X_k$ goes to $(\beta, \alpha)$ when $k$ goes to infinity, so this parallelogram goes to the parallelogram generated by $(m,n)$ and $(\beta, \alpha)$ of area strictly less than $1$. Thus for $k$ big enough, the parallelogram generated by the vector $(m,n)$ and the slit of $X_k$ must have area less than $1$, i.e $(m,n)$ is visible on $X_k$. \\
    Now, what happens if $|m\alpha - n\beta| = 1$? Note that necessarily $\alpha$ and $\beta$ are linearly dependant over $\QQ$, which is not true in general. In that case, there is a slight precaution to take: since this time the limit area of the parallelograms is not strictly less than $1$, we cannot conclude that all parallelograms of the sequence have area less than $1$ for $k$ big enough. Instead, we claim that half of them have area less than $1$. Indeed, since the value of a continued fraction lies between any two of its consecutive convergents, for all $k$ we have $p_k/q_k \le \alpha /\beta \le p_{k+1}/q_{k+1}$ or $ p_{k+1}/q_{k+1} \le \alpha /\beta \le p_k/q_k$. Let $k_0\in \NN$ such that $p_{k_0}/q_{k_0}$ lies between $n/m$ and $\alpha/\beta$. Then the parallelogram generated by $(m,n)$ and the slit of $X_{k_0}$ has area \emph{strictly} less than $1$, so $(m,n)$ is visible on $X_{k_0}$. Moreover, this is also true for any $k\ge k_0$ with the same parity as $k_0$. Thus $(m,n)$ is visible on infinitely many of the $X_k$'s. \newline

    Conversely if $(m,n)$ is visible on infinitely many $X_k$'s, say on a subsequence $X_{\varphi(k)}$, then the parallelograms generated by $(m,n)$ and the slit of $X_{\varphi(k)}$ have area less than $1$ and so does their limit, i.e $|m\alpha - n\beta| \le 1$ and $(m,n)$ is visible on $X'$.
\end{proof}

We now understand the visible classes on $X'$; what about the non-visible ones? Obviously, by Proposition \ref{limit}, if a class $(m,n)$ is not visible on $X'$ this is also the case on infinitely many $X_k$'s, and conversely. As in the case of rational tori the notion of Farey parents might not be relevant anymore, but one thing key thing remains: the visible neighbours.

\begin{lemma}
    Let $(m,n)$ be a homology class on $X'$, and assume that $(m,n)$ is not visible. Then it has at most two visible neighbours. More precisely, there exists at most two rationals $p/q$ and $p'/q'$ neighbours to $n/m$ in some Farey sequence (possibly in a different order) such that the homology classes $(q,p)$ and $(q',p')$ are visible on $X'$.
\end{lemma}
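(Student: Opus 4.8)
The plan is to enumerate the Farey neighbours of $n/m$ explicitly as a single arithmetic progression of lattice points, transport the visibility criterion for $X'$ onto this progression, and then count how many terms can satisfy it.

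First I would recall that, by the first property of Farey pairs stated above, a reduced fraction $p/q$ is a Farey neighbour of $n/m$ in some Farey sequence precisely when $|mp-nq|=1$. Fixing one integer solution $(q_0,p_0)$ of $mp-nq=1$, the integer points of the line $\{mp-nq=1\}\subset\RR^2$ are exactly $(q_k,p_k):=(q_0+km,\,p_0+kn)$ for $k\in\ZZ$; each such point automatically satisfies $\gcd(p_k,q_k)=1$, and since the central symmetry $(q,p)\mapsto(-q,-p)$ carries $\{mp-nq=1\}$ bijectively onto $\{mp-nq=-1\}$ without changing the underlying fraction, the map $k\mapsto p_k/q_k$ is a bijection from $\ZZ$ onto the set of Farey neighbours of $n/m$. (In particular the Farey parents $a/b$ and $c/d$ of $n/m$ occur as two of the terms $p_k/q_k$, and $n/m$ itself is not a neighbour since $mn-nm=0\ne1$.)

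Next I would plug this into the visibility criterion established above: the class $(q_k,p_k)$ is visible on $X'$ if and only if $|q_k\alpha-p_k\beta|\le1$, a condition that does not depend on which representative $\pm(q_k,p_k)$ of the direction $p_k/q_k$ one picks. Writing $u_0:=q_0\alpha-p_0\beta$ and $s:=m\alpha-n\beta$, linearity gives
$$q_k\alpha-p_k\beta=u_0+ks,$$
so the $k$-th neighbour is visible exactly when $|u_0+ks|\le1$. Since $(m,n)$ is not visible we have $|s|=|m\alpha-n\beta|>1$, hence the set $\{k\in\ZZ:\ |u_0+ks|\le1\}$ is contained in a real interval of length $2/|s|<2$, and such an interval contains at most two integers. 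Therefore at most two of the fractions $p_k/q_k$ are visible, which is exactly the assertion; when two of them occur they correspond to consecutive indices $k,k+1$, and depending on whether these straddle the index at which the terms switch sides relative to $n/m$, the two visible neighbours may lie on the same side of $n/m$ or on opposite sides, which accounts for the parenthetical ``possibly in a different order''.

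There is no serious obstacle here; the one point that requires care — and which I would state most carefully — is the first step, namely that the condition $|mp-nq|=1$, which a priori describes the integer points of \emph{two} parallel lines, in fact parametrizes the Farey neighbours by a single copy of $\ZZ$. If one kept the two lines separate one would naively bound the number of visible neighbours by $2+2=4$; the point is that $(q,p)\mapsto(-q,-p)$ identifies the fractions coming from the two lines, so the two counts must be merged (equivalently, the two one-sided progressions of neighbours lying on the two sides of $n/m$ are joined at the Farey parents into one bi-infinite progression). Once this identification is in place, the rest is the elementary observation that an affine function of an integer variable with slope of absolute value $>1$ stays in $[-1,1]$ for at most two values of that variable.
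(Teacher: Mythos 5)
Your proof is correct, but it follows a genuinely different route from the paper's. The paper deduces the bound from its rational approximation machinery: on a rational slit torus (an $SL(2,\ZZ)$-image of a vertical-slit torus) a non-visible class has at most two visible neighbours --- the images of the Farey parents --- and Proposition \ref{limit} (a class visible on $X'$ is visible on infinitely many of the approximating tori $X_k$) then excludes a third visible neighbour on $X'$. You instead argue directly: all Farey neighbours of $n/m$ form the single arithmetic progression $(q_0+km,\,p_0+kn)$, the linear form $q\alpha-p\beta$ restricted to it equals $u_0+ks$ with $|s|=|m\alpha-n\beta|>1$ by non-visibility, and $|u_0+ks|\le 1$ can hold for at most two (necessarily consecutive) integers $k$. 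Your insistence on merging the two lines $mp-nq=\pm1$ into one progression via $(q,p)\mapsto(-q,-p)$ is exactly the right point to be careful about, since otherwise one only gets the weaker bound of four. What your argument buys: it is self-contained and uniform, needing only the visibility criterion $|q\alpha-p\beta|\le 1$, it works verbatim for rational and irrational slits with no limiting process, and it even locates the two visible neighbours as adjacent terms of the progression. What the paper's route buys: it reuses machinery already set up for the irrational case and keeps the whole analysis of $X'$ anchored to the vertical-slit model, so the lemma is essentially a one-line corollary of Proposition \ref{limit}.
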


\begin{proof}
Let $X$ be a square torus with a vertical slit. On $X$, a non-visible class has up to two visible neighbours, which are its Farey parents. Since rational tori are obtained by applying a $SL(2,\ZZ)$ matrix transformation to a torus like $X$, this means that on any rational torus a non-visible class has at most two visible neighbours. Since the $X_k$ are rational, for any $k\in \NN$ the class $(m,n)$ has up to two visible neighbours. By Proposition \ref{limit}, if $(m,n)$ had three or more visible neighbours on $X'$ they should be also visible on the $X_k$ for $k$ big enough, which is impossible since there are at most two visible neighbours to a non-visible class on $X_k$. Hence the result.
\end{proof}

By analogy with the case of the vertical slit, let us consider separately the cases of classes with and without visible neighbours. This is analogous to the case of the torus with a vertical slit, where we separated classes with at least one visible Farey parent from classes without any visible Farey parent.

\paragraph{Classes with visible neighbours.}

Let $(m,n)$ be a non-visible integral class on $X'$ and assume that it has one visible neighbour, denoted $(q,p)$. By Proposition \ref{limit}, up to taking a sub-sequence, for $k$ big enough the situation is the same on $X_k$, that is to say the class $(m,n)$ is not visible on $X_k$ and the class $(q,p)$ is visible. By Corollary \ref{rational}, we already know that $(m,n)$ is a strictly convex direction of the unit ball of the stable norm of $X_k$, and now we would like to find a formula for its stable norm. Taking the limit when $k$ goes to infinity we will finally obtain a formula for the stable norm of $(m,n)$ in $X'$. \newline

Since $X_k$ is a rational torus, there exists a square torus $T_k$ with a vertical slit and a matrix $M_k \in SL(2,\ZZ)$ such that $X_k = M_k . T_k$. With the same reasoning as in Corollary \ref{action} and Corollary \ref{rational}, there exists a class $(\mu_k,\nu_k)$ on $T_k$ that is sent to $(m,n)$ by the action of $M_k$. Let $a_k/b_k$ and $c_k/d_k$ be the Farey parents of $\nu_k/\mu_k$, and let $(b_k,a_k)$ and $(d_k,c_k)$ be the associated homology classes on $T_k$. In order to express the stable norm of $(m,n)$ on $X_k$, what we need is to find the images of $(b_k,a_k)$ and $(d_k,c_k)$ under the action of $M_k$. Since $(m,n)$ has a visible neighbour $(q,p)$, we know the class $(q,p)$ is the image of one of them under the action of $M_k$. Indeed, the $SL(2,\ZZ)$-action on slit tori preserves the properties of being neighbours and of being visible, and the only visible classes neighbours to $(\mu_k,\nu_k)$ on $T_k$ are its Farey parents. So we have found the image of one of the Farey parents of $(\mu_k,\nu_k)$; what about the second one? \newline

If $(m,n)$ has a second visible neighbour $(q',p')$ on $X'$ (or equivalently on $X_k$ for $k$ big enough) then we are done, as it has to be the image of the second Farey parent of $(\mu_k,\nu_k)$, and by Corollary \ref{action} for all $k$ big enough we have

\begin{equation} \label{pvisible}  
||(m,n)||_{X_k} = ||(q,p) \pm (\beta_k,\alpha_k)||_2 + || (q',p') \mp (\beta_k,\alpha_k)||_2
\end{equation}

where $(\beta_k,\alpha_k)$ denotes the slit of $X_k$.
The term with the "$+$" sign corresponds to the one containing the image of the smallest Farey parent of $(\mu_k,\nu_k)$, as in Theorem \ref{main thm}. \newline 
 If $(m,n)$ has exactly one visible neighbour, the above formula still holds, although it is a bit trickier to establish. There are exactly two rationals, denoted $p'/q'$ and $p''/q''$, that are neighbours to both $n/m$ and $p/q$ is some Farey sequence: one of them is their Farey child and the other is their common Farey parent. Thus, one of the classes $(q',p')$ and $(q'',p'')$ has to be the image of the second Farey parent of $(\mu_k,\nu_k)$ under the action of $M_k$. Say it is $(q',p')$. Now, up to taking a sub-sequence we can assume that $(q',p')$ is always the image of the second Farey parent of $(\mu_k,\nu_k)$ under the action of $M_k$. Hence in this case, the formula  \ref{pvisible} still holds for all $k$ big enough. \\

 Thus, since $(\beta_k,\alpha_k) \longrightarrow (\beta,\alpha)$ when $k\rightarrow\infty$, by taking the limit of the expression \ref{pvisible} as $k$ goes to infinity we finally get

$$||(m,n)||_{X'} = ||(q,p) \pm (\beta,\alpha)||_2 + || (q',p') \mp (\beta,\alpha)||_2$$

The same argument as in Proposition  \ref{2 parent} and Proposition \ref{1 parent} then shows that the unit ball of the stable norm of $X'$ is strictly convex in the direction $(m,n)$.

\paragraph{Classes with no visible neighbour.}

Let $(m,n)$ be a non-visible integral class on $X'$, and assume it has no visible neighbour. By Proposition \ref{limit}, this is also the case on $X_k$ for $k$ big enough. By Corollary \ref{rational}, this means that the unit ball of the stable norm of $X_k$ has a flat in the direction $(m,n)$. In other words, there exist two integral classes $(q,p)$ and $(q',p')$ such that the point $\mathlarger{\frac{(m,n)}{||(m,n)||_{X_k}}}$ lies inside of the segment $I = \left [\mathlarger{\frac{(q,p)}{||(q,p)||_{X_k}}} , \mathlarger{\frac{(q',p')}{||(q',p')||_{X_k}}} \right ]$. We can assume that no visible class appears inside of $I$ as $k$ goes to infinity. Indeed, if a class inside of this segment became visible as $k$ increases we would just replace $I$ by a shorter segment. On the worst case, we would have to shrink $I$ only a finite number of times: if infinitely many visible classes would continue to appear as we take shorter and shorter segments this would give us a sequence of visible classes converging to $(m,n)$. But the only accumulation point of visible classes is the \emph{irrational} class $(\beta,\alpha)$, so this is absurd. \newline

Thus for $k$ big enough we have 
$$||(m,n)||_{X_k} = ||(q,p)||_{X_k}+ ||(q',p')||_{X_k}$$
By taking the limit as $k$ goes to infinity, we get
$$||(m,n)||_{X'} = ||(q,p)||_{X'}+ ||(q',p')||_{X'}$$

In particular the point $\mathlarger{\frac{(m,n)}{||(m,n)||_{X'}}}$ lies inside of the segment $ \left [\mathlarger{\frac{(q,p)}{||(q,p)||_{X'}}}, \mathlarger{\frac{(q',p')}{||(q',p')||_{X'}}} \right ]$, so the unit ball of the stable norm of $X'$ has a flat in the direction $(m,n)$. \newline \newline 

We have shown

\begin{thm}
    Let $X'$ be a square torus with a slit $(\beta,\alpha)$ of any direction. The unit ball of the stable norm of $X'$ has a vertex in the direction $(m,n)\in H_1(X',\ZZ)$ if and only if either
    \begin{itemize}\renewcommand{\labelitemi}{$\bullet$}
    \item $(m,n)$ is visible, i.e $|m\alpha - n\beta| \le 1$. \\ \\
    or 
    \item $(m,n)$ is not visible but is neighbour to at least one visible class.
    \end{itemize}
    The unit ball of the stable norm of $X'$ has a flat in any other direction of $H_1(X',\RR)$.
\end{thm}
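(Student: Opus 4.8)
The statement is, in essence, the synthesis of the three cases analysed in the paragraphs immediately above (visible classes, classes with a visible neighbour, classes with no visible neighbour); the only genuinely new point, exactly as in the proof of Theorem \ref{main thm}, is to promote each strictly convex direction of the unit ball $\Bb$ of the stable norm of $X'$ to a vertex, i.e.\ to a boundary point carrying infinitely many supporting lines. The plan is therefore: (1) recall the trichotomy of integral directions; (2) carry out the vertex upgrade in the first two cases; (3) invoke the already established flat statement in the third case; (4) dispose of the remaining (irrational) directions by density. Throughout I work with the approximating sequence $(X_k)$ of rational slit tori, with slit of length $L=\sqrt{\alpha^2+\beta^2}$ and slope the $k$-th continued-fraction convergent $p_k/q_k$ of $\alpha/\beta$, together with Proposition \ref{limit} and the Hausdorff--Lipschitz continuity of the stable norm (Lemma~5 of \cite{Gut-Mas}).

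First I would treat the \emph{visible case}. We already know that $(m,n)$ is visible if and only if $|m\alpha-n\beta|\le1$, in which case $\|(m,n)\|_{X'}=\|(m,n)\|_2$, so the point $p:=(m,n)/\|(m,n)\|_{X'}$ lies on $\SS^1$. To see that $p$ is a vertex I would copy the accumulation argument at the end of the proof of Theorem \ref{main thm}: for $k$ large the Farey children of $n/m$ are non-visible (the relevant parallelogram area grows linearly, since $(m,n)$ cannot be colinear with the irrational slit) but each has $(m,n)$ as a Farey neighbour, hence is a strictly convex direction whose stable norm is governed by the two-term formula of the discussion of classes with a visible neighbour; the corresponding boundary points split into two sequences $(u_k),(v_k)$ accumulating on $p$ from the two sides, of the form $u_k=(m'+km,\,n'+kn)/\|(m'+km,n'+kn)\|_{X'}$ for a fixed visible neighbour $(m',n')$ of $(m,n)$. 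Substituting the two-term formula $\|(m'+km,n'+kn)\|_{X'}=\|(m,n)\pm(\beta,\alpha)\|_2+\|(m'+(k-1)m,\,n'+(k-1)n)\mp(\beta,\alpha)\|_2$ and expanding in $1/k$ yields $p-u_k=\bigl(c_1/k+O(1/k^2),\,c_2/k+O(1/k^2)\bigr)$ with $c_1,c_2$ nonzero constants, and likewise for $v_k$; so (as in the proof of Theorem \ref{main thm}) the directions of $p-u_k$ and $p-v_k$ tend to directions transverse to the tangent of $\SS^1$ at $p$, whence a whole cone of supporting lines of $\Bb$ passes through $p$ and $p$ is a vertex.

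Next the \emph{non-visible case with a visible neighbour}. Here the discussion of classes with a visible neighbour already gives, after passing to the limit over $k$, the formula $\|(m,n)\|_{X'}=\|(m_1,n_1)\pm(\beta,\alpha)\|_2+\|(m_2,n_2)\mp(\beta,\alpha)\|_2$, where $(m_1,n_1)$ is the visible neighbour and $(m_2,n_2)$ the other class adjacent both to $n/m$ and to $n_1/m_1$, and shows, via the argument of Propositions \ref{2 parent} and \ref{1 parent}, that $(m,n)$ is a strictly convex direction. To upgrade this to a vertex I would note that this direction is isolated among strictly convex directions: its slope is a rational one of whose bounded-denominator neighbours is visible, and as in Theorem \ref{main thm} the successive Farey children of a rational accumulate only on that rational. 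Thus $(m,n)/\|(m,n)\|_{X'}$ is the common endpoint of two flats of $\Bb$, and a short computation with the three relevant boundary points shows that they are not collinear; the two flats therefore meet at a positive angle and the point carries infinitely many supporting lines. For the \emph{non-visible case with no visible neighbour}, nothing new is needed: by Proposition \ref{limit} the same holds on $X_k$ for $k$ large, Corollary \ref{rational} gives a flat of the unit ball of $X_k$ witnessed by a segment whose endpoints can be shrunk finitely often so that no visible class ever appears strictly inside it (visible classes accumulating only on the irrational direction $(\beta,\alpha)$), and Hausdorff--Lipschitz continuity then produces a flat of $\Bb$ in the direction $(m,n)$, as in the discussion of classes with no visible neighbour. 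Finally an irrational direction other than the slit direction lies in the interior of such a flat coming from a nearby non-visible integral class with no visible neighbour, so $\Bb$ has a flat there as well.

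The hard part will be the vertex upgrade in the visible case: it requires knowing that sufficiently many Farey children of a visible class $(m,n)$ are strictly convex directions governed by the explicit two-term stable-norm formula — which is true precisely because such a child, although non-visible, always has $(m,n)$ itself as a visible neighbour and so is covered by the second bullet of the statement — and then checking that the vectors $p-u_k$ and $p-v_k$ tend to directions transverse (not tangent) to $\SS^1$ at $p$, i.e.\ that the leading coefficients $c_1,c_2$ behave as in the proof of Theorem \ref{main thm}. Once this is in place, the remaining verifications are routine transcriptions of the three preceding paragraphs.
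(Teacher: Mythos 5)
Your proposal is correct and follows essentially the same route as the paper: the theorem there is stated as a direct synthesis of the three preceding paragraphs (visible classes via $|m\alpha-n\beta|\le 1$, classes with a visible neighbour via the limiting two-term formula over the rational approximants $X_k$, classes with no visible neighbour via the limiting flat), which is exactly your plan. The only difference is that you explicitly carry out the vertex upgrade by transplanting the accumulation/supporting-line argument from the proof of Theorem \ref{main thm}, a step the paper leaves implicit, so your write-up is if anything slightly more complete than the original.
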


\section{Glueing slit tori}

As we now fully understand the stable norm of slit tori we can move on to the second objective of this paper, which is glueing slit tori together to obtain flat surfaces on which we can compute the stable norm. \newline \newline

Let $X_1,...,X_n$ be $n$ identical copies of the square slit torus of area $1$ with a vertical slit of length $\rho$. We will glue them along flat cylinders and obtain a flat surface of genus $n$. In order to simplify the notations we will assume that $n=2$; the proof is identical in the general case. \newline

Let $\Sigma$ be the closed genus $2$ surface obtained by glueing a flat cylinder $C$ of width $w$ along the slits of $X_1$ and $X_2$. By construction, the surface $\Sigma$ is endowed with a flat metric, obtained by glueing the flat metrics of the two slit tori and of the cylinder. By Riemann's uniformization theorem, we know that this metric cannot be smooth; indeed, a closer inspection shows the metric has four conical singularities of angle $3 \pi$ located at each endpoint of the slits, and is smooth elsewhere. One can check that $\Sigma$ is a half-translation surface; see for instance Zorich's survey \cite{Zorich} for more on the geometry of these surfaces. We have

$$ H_1(\Sigma,\ZZ) = H_1(X_1,\ZZ)\oplus H_1(X_2,\ZZ) \simeq \ZZ^4$$

so if $(e_i,f_i)$ is a basis of $H_1(X_i,\ZZ)$, we have $H_1(\Sigma,\ZZ) = \langle e_1,f_1,e_2,f_2 \rangle$. \newline

We denote $\StS{.}$ the stable norm of $\Sigma$, and $||.||_{X_i}$ the stable norm of the slit tori $X_i$.
\begin{lemma}\label{restriction}
    The stable norm of $\Sigma$ coincides with the stable norm of $X_i$ on the plane $Span(e_i,f_i)$.
\end{lemma}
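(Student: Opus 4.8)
The plan is to show two inequalities. For a class $h = m e_i + n f_i \in \mathrm{Span}(e_i,f_i) \cap H_1(\Sigma,\ZZ)$, the inequality $\StS{h} \le \|h\|_{X_i}$ is immediate: a minimizing curve for $h$ on $X_i$ can be pushed off the slit (as was done repeatedly in Section 1, translating curves away from the slit), so it lies in the interior of $X_i$, hence embeds isometrically into $\Sigma$ as a curve still representing $h$ (under the inclusion $H_1(X_i,\ZZ) \hookrightarrow H_1(\Sigma,\ZZ)$). Therefore $\StS{h} \le f_\Sigma(h) \le \|h\|_{X_i}$.

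The substantive direction is $\StS{h} \ge \|h\|_{X_i}$. First I would reduce to curves: by the corollary after the Hedlund argument, $\StS{h}$ is realized by a closed curve $\gamma$ on $\Sigma$, and likewise $\|h\|_{X_i}$ is a genuine minimum. The idea is that any closed curve on $\Sigma$ representing a class in $\mathrm{Span}(e_i,f_i)$ can be ``retracted'' into $X_i$ without increasing length. Concretely, consider the natural map $r : \Sigma \to X_i$ that is the identity on $X_i$, collapses the cylinder $C$ onto the slit of $X_i$ (orthogonal projection along the cylinder, which is $1$-Lipschitz since $C$ is a flat product), and collapses the other torus $X_j$ ($j \ne i$) onto the same slit via its own retraction. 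One checks $r$ is $1$-Lipschitz: on $X_i$ it is an isometry, on $C$ it is the length-nonincreasing projection of a flat cylinder onto one of its boundary circles, and on $X_j$ one uses that $X_j$ deformation retracts onto its slit — but here care is needed, since an arbitrary deformation retract need not be Lipschitz. The cleaner route is to note that the curve $\gamma$, being in the image of $H_1(X_i)$, has trivial homology projection to $H_1(X_j)$ and to $H_1(C)$; so on the homology level its contributions from $C$ and $X_j$ cancel, and one argues that the portions of $\gamma$ lying in $C \cup X_j$ can be replaced by arcs along the slit of $X_i$ (which has length $\rho < \infty$, but more importantly these are ``free'' in the sense that they can be absorbed).

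Here is the step I expect to be the real obstacle, and how I would handle it. The curve $\gamma$ need not stay in $X_i$; it may wander into $C$ and $X_j$. Decompose $\gamma$ into maximal subarcs lying in $X_i$ and maximal subarcs lying in $\overline{C \cup X_j}$; each arc of the second type enters and exits through one of the two endpoints of the slit. Replace each such arc by a constant path at that slit endpoint if it returns to the same endpoint, or by the straight subsegment of the slit joining the two endpoints if it runs between them; since travelling through $C$ or $X_j$ between two slit endpoints costs at least the distance within $X_i$ between those points (the cylinder has width $w$ and $X_j$ only adds length), this replacement does not increase total length, and — because the discarded arcs carried zero net homology in $H_1(X_i)$ — the resulting closed curve still represents $h$ in $H_1(\Sigma,\ZZ)$ and now lies entirely in $X_i$. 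Hence its length is $\ge \|h\|_{X_i}$, giving $\StS{h} = l(\gamma) \ge \|h\|_{X_i}$. Combining the two inequalities yields $\StS{h} = \|h\|_{X_i}$ on integral classes, and by homogeneity and continuity this extends to all of $\mathrm{Span}(e_i,f_i)$, which is the claim. The delicate point throughout is verifying that ``an excursion into $C \cup X_j$ between two slit endpoints is no shorter than the corresponding path inside $X_i$'' — this is where the flatness of the cylinder and the fact that $X_j$ is glued only along the slit are used, and it should be spelled out with a short distance-comparison argument rather than invoked.
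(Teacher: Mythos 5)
Your overall strategy is the same as the paper's: the inequality $\StS{h}\le\|h\|_{X_i}$ is immediate, and for the converse one takes a minimizing curve $\gamma$ on $\Sigma$, excises its excursions into $C\cup X_j$, replaces them by short arcs along the slit of $X_i$, and checks that the homology class is unchanged (your argument that the discarded pieces contribute only to $\mathrm{Span}(e_j,f_j)$, which must then vanish, is correct). But two things go wrong in the execution. First, the claim that ``each arc of the second type enters and exits through one of the two endpoints of the slit'' is false: after the gluing, the slit of $X_i$ is a circle of circumference $2\rho$ (a boundary circle of $C$), and an excursion can leave and re-enter $X_i$ at arbitrary points of that circle, not only at the two cone points. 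The correct replacement is the shorter arc of the slit circle joining the actual entry and exit points (of length at most $\rho$); your recipe (a constant path at a cone point, or the segment joining the two cone points) does not in general produce a closed curve homologous to $\gamma$.

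Second, the length comparison you defer --- that an excursion into $C\cup X_j$ between two points of the slit circle is no shorter than the corresponding arc of the slit --- is exactly where the content of the lemma lies, and the justification you gesture at (``the cylinder has width $w$ and $X_j$ only adds length'') is not sufficient: if the excursion reaches $X_j$, the naive bound it yields is $2w$, which does not dominate the replacement arc (of length up to $\rho$) unless one assumes $w$ large, an assumption the lemma does not make. To close this you either need to prove that the distance in $X_j$ between two points of its slit circle is at least their distance along that circle (so that the entire excursion projects $1$-Lipschitzly onto the circle factor of $C$), or do what the paper does: first close up $\gamma\cap X_j$ inside $X_j$ by an arc $\mu_j$ of length at most $\rho$ running around the slit of $X_j$, and argue that the resulting closed curve is either contractible (contradicting minimality of $\gamma$) or essential, hence of length at least $2\rho$ (or at least $1$ if it wraps around the torus), which forces $l(\gamma\cap X_j)\ge l(\mu_j)$; only afterwards is the cylinder part eliminated using the product structure of $C$. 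As written, your plan asserts the decisive inequality rather than proving it, and the case where it is most delicate (excursions through $X_j$ with a short cylinder) is precisely the one the sketch glosses over.
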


\begin{proof}
    Assume for instance $i=1$, and let $h=(m,n,0,0)\in H_1(\Sigma,\ZZ) \cap Span(e_1,f_1)$ be an integral homology class and $\gamma$ be a minimizing curve representing $h$. We will show that $\gamma$ is a closed curve contained in $X_1$. \\ 
    Let $\gamma_2 := \gamma \cap X_2$ be the part of $\gamma$ (possibly empty) inside of $X_2$; for simplicity let us assume that $\gamma_2$ is an arc. Close this arc with a path $\mu_2 \subset X_2$ that goes around the slit in the shortest way to obtain a closed curve $\tilde{\gamma}_2$ in $X_2$. Note that $\mu_2$ has length at most $\rho$ the length of the slit. Let $\tilde{\gamma}$ be the closed curve obtained by closing $\gamma - \gamma_2$ with the path $\mu_2$.
    
    \begin{center}
    \includegraphics[scale=0.8]{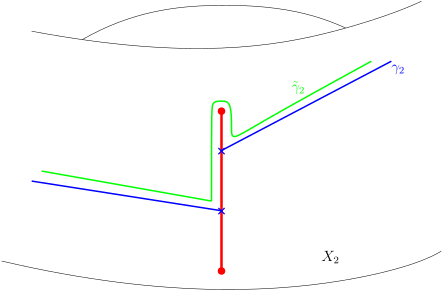}
    \captionof{figure}{Closing $\gamma_2$ by going around the slit.}
\end{center}
By construction, $[\gamma]= [\tilde{\gamma}] + [\tilde{\gamma}_2] = (m,n,0,0)$. As $\tilde{\gamma}_2$ has intersection number zero with the canonical representatives of the homology classes $(1,0,0,0)$ and $(0,1,0,0)$, it represents a homology class whose first two coordinates are zero. Similarly, the curve $\tilde{\gamma}$ represents a homology class whose last two coordinates are zero. Since the sum of those two classes is $(m,n,0,0)$, we get  $[\tilde{\gamma}_2]=0$ so the curve $\tilde{\gamma}$ represents the same homology class as $\gamma$. We have
$$l(\tilde{\gamma}) = l(\gamma) + l(\mu_2) - l(\gamma_2)$$

Assume that $\gamma_2 := \gamma \cap X_2$ is non-trivial, i.e $l(\gamma_2)>0$. Necessarily the closed curve $\tilde{\gamma}_2$ has non-trivial homotopy class in the slit torus $X_2$: otherwise we could shrink $\tilde{\gamma}_2$ (and thus $\gamma_2$) onto a point and reduce the length of $\gamma$, contradicting its minimality. Thus \begin{itemize} 
\item either $\tilde{\gamma}_2$ has to loop around the torus $X_2$ at least once, and since $X_2$ is obtained from a square of side of length $1$ we have $l(\gamma_2)\ge 1$ \item either $\tilde{\gamma}_2$ is homotopic to a circle going around the slit so $l(\tilde{\gamma}_2) \ge 2\rho$, and since $l(\tilde{\gamma}_2) = l(\gamma_2) + l(\mu_2)$ and $l(\mu_2) \le \rho$ we have $l(\gamma_2) \ge \rho$.
\end{itemize}

In both cases we have $l(\gamma_2) \ge l(\mu_2)$. Hence $l(\tilde{\gamma}) = l(\gamma) + l(\mu_2) - l(\gamma_2) < l(\gamma)$, which contradicts the minimality of $\gamma$. Thus $l(\gamma_2)=0$ and the minimizing curve $\gamma$ does not enter $X_2$. \\ 

We repeat the previous process: let $\gamma_1 := \gamma \cap X_1$ and $\gamma_C := \gamma \cap C$. We get a closed curve $\tilde{\gamma_1}$ by closing $\gamma_1$ with the path $\mu_1 \subset X_1$ going around the slit in the shortest way. Again, $l(\mu_1) \le \rho$. Set $\tilde{\gamma_C} = \gamma_C \cup \mu_1$. It is a closed curve contained in $C$ with trivial homology class, hence $\tilde{\gamma}_1$ represents the same homology class as $\gamma$. If $l(\gamma_C) >0$ clearly we have $l(\gamma_C) > l(\mu_1)$ so $$l(\gamma_1) + l(\mu_1) =l(\tilde{\gamma}_1) < l(\gamma) = l(\gamma_1) + l(\gamma_C)$$
which contradicts the minimality of $\gamma$. 

\begin{center}\label{excursion}
    \includegraphics[scale=0.75]{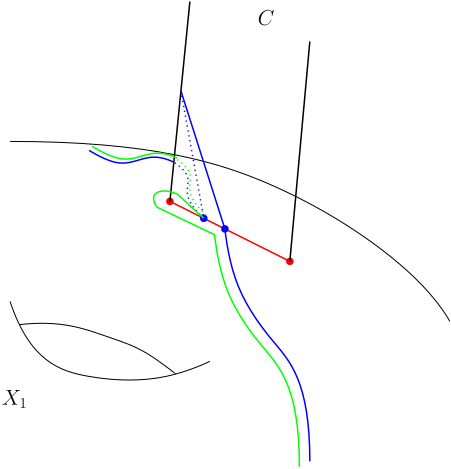}
    \captionof{figure}{The green path is shorter than the blue one.}
\end{center} 

Thus $l(\gamma_C)=0$, i.e $\gamma$ is a closed curve that is contained in $X_1$, and by definition of the stable norm $\gamma$ is minimizing for the homology class $(m,n)$ in $X_1$. We have shown $\| (m,n,0,0)\|_{\Sigma} = \| (m,n) \|_{X_1}$.
\end{proof}

We can now completely compute the stable norm of $\Sigma$. Just like we did for the square slit torus, we look for minimizing curves for $h$ on $\Sigma$ by separating all of its representatives into two families, inside of each we can easily minimize the length, then compare the results. This time, we separate the curves according to whether or not they cross the cylinder.

\begin{prop}
If the cylinder $C$ is long enough, more precisely if $w>\rho$ with $\rho$ the length of the slits, then for all $(m,n,p,q)\in H_1(\Sigma,\ZZ)$ we have $$\StS{(m,n,p,q)} = ||(m,n)||_{X_1}+ ||(p,q)||_{X_2}$$
\end{prop}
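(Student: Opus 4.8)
The plan is to prove the two inequalities separately, the easy one being $\StS{(m,n,p,q)} \le \|(m,n)\|_{X_1}+\|(p,q)\|_{X_2}$. This follows at once by writing $(m,n,p,q)=(m,n,0,0)+(0,0,p,q)$, applying the triangle inequality for the stable norm of $\Sigma$, and then invoking Lemma \ref{restriction} to identify $\StS{(m,n,0,0)}=\|(m,n)\|_{X_1}$ and $\StS{(0,0,p,q)}=\|(p,q)\|_{X_2}$. All the work is in the reverse inequality, and here the idea is to take an arbitrary closed curve $\gamma$ on $\Sigma$ representing the class, cut it into the pieces lying in $X_1$, in $X_2$ and in the cylinder $C$, reassemble the $X_1$-pieces (resp. $X_2$-pieces) into a closed multicurve $\gamma_1\subset X_1$ (resp. $\gamma_2\subset X_2$) of the correct homology class, and pay for the reassembly with the length $\gamma$ is forced to spend crossing the long cylinder.

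First I would normalize $\gamma$: after a small perturbation (harmless for the infimum) we may assume $\gamma$ is transverse to the two boundary circles of $C$, which we identify with $\partial X_1$ and $\partial X_2$, and avoids the cone points. Each maximal subarc of $\gamma$ contained in $\overline{C}$ is then either a \emph{cross-arc}, joining $\partial X_1$ to $\partial X_2$, or a \emph{U-turn} with both endpoints on the same boundary circle. A U-turn $u$ with endpoints on $\partial X_1$ can be straightened: developing the flat cylinder $C$ into a strip $\RR\times[0,w]$ with $\partial X_1$ on the line $\RR\times\{0\}$, the lift of $u$ joins two points at height $0$, so $l(u)$ is at least the horizontal distance between them, and $u$ is homotopic rel endpoints (inside $\overline C$) to the corresponding path supported on $\partial X_1=\partial X_1$, which has length $\le l(u)$ and lies in $\overline{X_1}\subset\Sigma$. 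Replacing $u$ by this path does not change the homology class of $\gamma$, does not increase its length, and strictly decreases the number of U-turns; iterating, we may assume $\gamma\cap\overline C$ consists of cross-arcs only. If there are none, $\gamma$ lies entirely in $X_1$ or entirely in $X_2$, the relevant pair of coordinates of the class vanishes, and we conclude directly by Lemma \ref{restriction}. Otherwise there are $2\ell$ cross-arcs with $\ell\ge 1$, each of length $\ge w$ (the width of $C$), and, because cross-arcs alternate direction and no U-turns remain, $\gamma$ now reads cyclically as an alternating concatenation of $\ell$ arcs $\alpha_1,\dots,\alpha_\ell$ in $\overline{X_1}$, $\ell$ arcs $\beta_1,\dots,\beta_\ell$ in $\overline{X_2}$, and the $2\ell$ cross-arcs.

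Now I would close up. Pair the $2\ell$ endpoints of the $\alpha_i$ on the circle $\partial X_1$ in any way and join each pair by the shorter of the two boundary arcs between them; since $\partial X_1$ has length $2\rho$, each closing arc has length $\le\rho$, so the resulting closed multicurve $\gamma_1\subset X_1$ satisfies $l(\gamma_1)\le\sum_i l(\alpha_i)+\ell\rho$. A point to be checked is that $[\gamma_1]=(m,n)$ in $H_1(X_1)$ regardless of the pairing: the closing arcs lie on $\partial X_1$, hence can be taken disjoint from interior representatives of $e_1$ and $f_1$, so the algebraic intersection numbers of $\gamma_1$ with $e_1,f_1$ equal those of $\gamma$, namely $m$ and $n$ up to sign, and this determines $[\gamma_1]$ since $\{e_1,f_1\}$ is a basis. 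The same construction in $X_2$ gives $\gamma_2\subset X_2$ with $[\gamma_2]=(p,q)$ and $l(\gamma_2)\le\sum_j l(\beta_j)+\ell\rho$. Since for a multicurve $c=\sqcup c_i$ one has $l(c)=\sum l(c_i)\ge\sum\|[c_i]\|\ge\|\sum[c_i]\|$, we get $l(\gamma_1)\ge\|(m,n)\|_{X_1}$ and $l(\gamma_2)\ge\|(p,q)\|_{X_2}$. Adding the three contributions,
$$l(\gamma)=\sum_i l(\alpha_i)+\sum_j l(\beta_j)+(\text{length in }C)\ge\big(l(\gamma_1)-\ell\rho\big)+\big(l(\gamma_2)-\ell\rho\big)+2\ell w=l(\gamma_1)+l(\gamma_2)+2\ell(w-\rho),$$
which is $\ge l(\gamma_1)+l(\gamma_2)\ge\|(m,n)\|_{X_1}+\|(p,q)\|_{X_2}$ exactly because $w>\rho$. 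Running this estimate for every multiple $N(m,n,p,q)$ and using $\StS{h}=\lim_{N}f(Nh)/N$ (to avoid having to know a priori that $f=\StS{\cdot}$ on $\Sigma$) yields $\StS{(m,n,p,q)}\ge\|(m,n)\|_{X_1}+\|(p,q)\|_{X_2}$, which together with the easy direction finishes the proof.

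The main obstacle I expect is the cylinder bookkeeping: making the reduction to cross-arcs rigorous (the homotopy-and-length statement for U-turns via the developing map, including the case where the straightened arc wraps around $\partial X_1$), and verifying that the numbers of $X_1$-arcs, $X_2$-arcs and cross-arcs match so that exactly $\ell$ closing arcs of length $\le\rho$ suffice on each side. Everything else — the homological identity $[\gamma_1]=(m,n)$, the multicurve inequality $l(c)\ge\|[c]\|$, and the final arithmetic — is routine, and it is the last step that makes transparent why $w>\rho$ is needed rather than a technical artifact: with $w\le\rho$ a curve could profitably detour through $C$ instead of paying the cost of looping around a slit, and the splitting formula would break.
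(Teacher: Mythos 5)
Your proof is correct and follows essentially the same strategy as the paper: cut $\gamma$ along the cylinder, close the resulting arcs in each $X_i$ with boundary arcs of length at most $\rho$, and charge each crossing pair the cost $2w$, so that $w>\rho$ makes crossing unprofitable. The only difference is that you carry out the bookkeeping in full generality ($2\ell$ cross-arcs, explicit straightening of U-turns, the intersection-number check that $[\gamma_1]=(m,n)$), where the paper treats the single-crossing case and asserts the rest is identical.
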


\begin{proof}
Let $h=(m,n,p,q)\in H_1(\Sigma,\ZZ)$ be an integral class in $\Sigma$. If $(m,n)=(0,0)$ or $(p,q)=(0,0)$, we are in the setting of Lemma \ref{restriction} and there is nothing left to prove, so let us assume $(m,n)\ne (0,0)$ and $(p,q)\ne (0,0)$. Let $\gamma$ be a representative of $h$. Note that $\gamma$ is not necessarily connected. We consider two distinct cases, depending on whether or not $\gamma$ crosses the cylinder. \newline \newline

First, consider the case where $\gamma$ does not cross the cylinder: more precisely, let us assume that there does not exist an arc of $\gamma$ whose endpoints lie in $X_1$ and $X_2$ respectively. By the same argument as in the proof of Lemma \ref{restriction} it is not efficient for $\gamma$ to venture into the cylinder without fully crossing it, so for $\gamma$ to be as short as possible necessarily $\gamma \cap C = \emptyset$. Thus $\gamma = \gamma_1 \cup \gamma_2$, with $\gamma_1=\gamma\cap X_1$ (resp. $\gamma_2=\gamma\cap X_2$) a multicurve contained in $X_1$ (resp. $X_2$) representing the homology class $(m,n,0,0)$ (resp. $(0,0,p,q)$). The length of $\gamma$ is minimal if and only if both $\gamma_1$ and $\gamma_2$ are minimizing in their homology class, so by Lemma \ref{restriction} the minimal length for $\gamma$ is $||(m,n)||_{X_1}+ ||(p,q)||_{X_2}$. \newline \newline

Second, assume that $\gamma$ crosses the cylinder at least once. For simplicity assume that it crosses the cylinder exactly once in each direction; the same argument will hold if $\gamma$ crosses the cylinder more than once. Denote $\gamma_C := \gamma \cap C$ the part of $\gamma$ that lies in the cylinder, and $\gamma_i = \gamma \cap X_i$ the part of $\gamma$ inside of $X_i$. We have $$l(\gamma)=l(\gamma_C)+l(\gamma_1)+l(\gamma_2)$$

This time the components in $X_i$ are not a union of closed curves, as we cut when $\gamma$ went into the cylinders: hence $\gamma_1$ and $\gamma_2$ are composed of an arc and possibly some closed curves. Now close the arc in $\gamma_1$ by adding the shortest path between its endpoints (the points where $\gamma$ exits and re-enters $X_1$) that follows the edges of the slit (see Figure \ref{excursion}). Note that this path has length at most $\rho$. We obtain a union $\tilde{\gamma}_1$ of closed curves in $X_1$, with $l(\tilde{\gamma}_1) \le l(\gamma_1) + \rho$. Moreover, $\tilde{\gamma}_1$ represents the homology class $(m,n,0,0)$, so $||(m,n)||_{X_1} \le l(\tilde{\gamma}_1)$ and finally $$l(\gamma_1) \ge ||(m,n)||_{X_1} - \rho$$
The same construction in $X_2$ yields $$l(\gamma_2) \ge ||(p,q)||_{X_2} - \rho$$

Finally, since $\gamma$ crosses the cylinder, there is an arc going from $X_1$ to $X_2$ through the cylinder; but this arc is part of a closed curve so $\gamma$ has to go back to $X_1$ eventually. Hence $\gamma_C$ in a union of two arcs whose endpoints lie in $X_1$ and $X_2$, so $$l(\gamma_C) \ge 2w$$ where $w$ is the width of the cylinder.
Putting everything together we finally get that in this case
$$l(\gamma) \ge ||(m,n)||_{X_1} + ||(p,q)||_{X_2} + 2(w - \rho) $$
so if $w>\rho$ this is strictly longer than in the case of curves that do not cross the cylinder. Hence the shortest representative of $(m,n,p,q)$ in $\Sigma$ is a multicurve that does not encounter the cylinder, and it has length $||(m,n)||_{X_1} + ||(p,q)||_{X_2}$.

\end{proof}

Thus, if $(m,n)\ne (0,0)$ and $(p,q)\ne (0,0)$, the point $\mathlarger{\frac{(m,n,p,q)}{\StS{(m,n,p,q)}}}$ of the unit sphere of the stable norm of $\Sigma$ in the direction $(m,n,p,q)$ lies in the interior of the segment of endpoints $\mathlarger{\frac{(m,n,0,0)}{\StS{(m,n,0,0)}}}$ and $\mathlarger{\frac{(0,0,p,q)}{\StS{(0,0,p,q)}}}$. Hence, if the unit ball $\Bb$ of the stable norm of $\Sigma$ has any vertices they must lie in the homology planes associated to the slit tori $X_1$ and $X_2$, that is $Span(e_1,f_1)$ and $Span(e_2,f_2)$. With the same argument as in Theorem \ref{main thm}, one can check that the vertices of the stable norm of $\Sigma$ correspond exactly to the vertices of the stable norm of $X_1$ and $X_2$. More precisely, $(m,n,0,0)$ (resp. $(0,0,p,q)$) is the direction of a vertex of the stable norm of $\Sigma$ if and only if $(m,n)$ (resp. $(p,q)$) is the direction of a vertex of the stable norm of $X_1$ (resp. $X_2$).

We have shown

\begin{thm}\label{glue}
    The unit ball of the stable norm of $\Sigma$ is the convex hull of the set of its vertices, that is to say the convex hull of the set
    $$ \left \{ \frac{(m,n,0,0)}{\StS{(m,n,0,0}} \text{ with } (m,n)\in V(X_1)  \right \} \cup \left\{\frac{(0,0,p,q)}{\StS{(0,0,p,q)}} \text{ with } (p,q)\in V(X_2)\right\} $$ where $V(X_i)$ is the set of directions of vertices of the stable norm of $X_i$.
\end{thm}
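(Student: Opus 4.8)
The plan is to read off the unit ball of $\StS{\cdot}$ directly from the formula established in the Proposition above and to recognise it as the $\ell^1$-join of the two two-dimensional unit balls $\Bb_{X_1}$ and $\Bb_{X_2}$; the geometry of those planar balls is then supplied by Theorem \ref{main thm}.

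\emph{Step 1: the unit ball of $\Sigma$ as a join.} The identity $\StS{(m,n,p,q)} = \|(m,n)\|_{X_1} + \|(p,q)\|_{X_2}$, proved for integral classes in the Proposition above, extends to all of $H_1(\Sigma,\RR) = \mathrm{Span}(e_1,f_1) \oplus \mathrm{Span}(e_2,f_2)$ by homogeneity and continuity of the stable norm. Writing a real class as $(x,y)$ with $x \in \mathrm{Span}(e_1,f_1)$ and $y \in \mathrm{Span}(e_2,f_2)$, this says $\Bb_\Sigma = \{(x,y) : \|x\|_{X_1} + \|y\|_{X_2} \le 1\}$. A routine computation, valid for the $\ell^1$-sum of any two normed spaces, then gives
$$\Bb_\Sigma = \mathrm{conv}\Bigl( \bigl(\Bb_{X_1} \times \{0\}\bigr) \cup \bigl(\{0\} \times \Bb_{X_2}\bigr) \Bigr),$$
since every point of the right-hand side has the form $\bigl((1-t)x_1,\,t\,y_2\bigr)$ with $t \in [0,1]$, $x_1 \in \Bb_{X_1}$, $y_2 \in \Bb_{X_2}$, and conversely a point $(x,y)$ with $\|x\|_{X_1} + \|y\|_{X_2} \le 1$ is put in this form by taking $t = \|y\|_{X_2}$ and rescaling, with the evident modifications when one of the two norms vanishes. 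In passing this recovers the remarks made before the statement: a boundary point of $\Bb_\Sigma$ not lying in one of the two coordinate planes is a ``mixed'' direction, and it lies in the relative interior of the segment joining its two planar projections, hence it carries no vertex.

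\emph{Step 2: each $\Bb_{X_i}$ is the convex hull of its vertices.} By Theorem \ref{main thm}, $\Bb_{X_i}$ is strictly convex exactly in the directions of $V(X_i)$ and has a flat in every other direction. By the definition of a flat, a boundary point in a flat direction lies in the interior of a boundary segment and is therefore not an extreme point, so the set of extreme points of $\Bb_{X_i}$ is precisely $\{\,v/\|v\|_{X_i} : v \in V(X_i)\,\}$. This set is moreover closed: if a sequence of extreme points converged to a boundary point $q$ that were not extreme, then $q$ would lie in the relative interior of a maximal boundary segment, a relative neighbourhood of which in $\partial\Bb_{X_i}$ consists entirely of non-extreme points --- contradicting that the sequence eventually enters that neighbourhood. (The visible directions are accumulation points of vertices, but by Theorem \ref{main thm} they are themselves vertices, so nothing is lost in the limit.) Being closed and bounded, the set of extreme points is compact, hence its convex hull is compact (Carath\'eodory's theorem), and by the Krein--Milman theorem it equals $\Bb_{X_i}$.

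\emph{Step 3: conclusion.} Substituting $\Bb_{X_i} = \mathrm{conv}\bigl(\{v/\|v\|_{X_i} : v \in V(X_i)\}\bigr)$ into the join description of Step 1, and using that the convex hull of a union of convex hulls is the convex hull of the union together with the fact (Lemma \ref{restriction}) that $\StS{\cdot}$ restricts to $\|\cdot\|_{X_i}$ on $\mathrm{Span}(e_i,f_i)$, one obtains exactly
$$\Bb_\Sigma = \mathrm{conv}\!\left( \Bigl\{ \tfrac{(m,n,0,0)}{\StS{(m,n,0,0)}} : (m,n) \in V(X_1)\Bigr\} \cup \Bigl\{ \tfrac{(0,0,p,q)}{\StS{(0,0,p,q)}} : (p,q) \in V(X_2)\Bigr\}\right),$$
which is the asserted equality. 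I expect Step 2 to be the main obstacle: one must be sure that the vertex set of each slit torus is closed, so that one genuinely gets the convex hull --- and not merely its closure --- of the vertices; this is exactly the point at which the accumulation of vertices onto the (themselves vertex) visible directions has to be controlled. Everything else is the Proposition above together with elementary convex geometry.
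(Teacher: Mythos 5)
Your proof is correct and follows essentially the same route as the paper: both rest on the preceding Proposition's identity $\StS{(m,n,p,q)} = \|(m,n)\|_{X_1}+\|(p,q)\|_{X_2}$, deduce that every mixed direction lies in the relative interior of a segment joining points of the two coordinate planes (so all extreme points live in $\mathrm{Span}(e_1,f_1)\cup\mathrm{Span}(e_2,f_2)$), and then invoke Theorem \ref{main thm} to identify those extreme points with the vertices of the $\Bb_{X_i}$. Your Step 2 merely makes explicit a point the paper leaves implicit --- that one gets the convex hull itself and not just its closure --- which in the plane also follows directly from Minkowski's theorem that a compact convex set is the convex hull of its extreme points.
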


In particular, the unit ball of the stable norm of $\Sigma$ has flats of dimension $3$, which is maximal since $H_1(\Sigma,\RR)$ has dimension $4$. More generally, glueing $n$ slit tori with the same construction gives us a half-translation surface of genus $n$ whose flats of the stable norm are of maximal dimension, that is $2n-1$. \newline

\section{Counting simple homology classes}

A classical problem related to the stable norm is to find the homology classes that are minimized by \emph{nice} representatives. For instance, we can ask for the representatives to be simple:

\begin{definition}
    Let $S$ be a (possibly singular) Riemannian compact surface. A homology class $h\in H_1(S,\ZZ)$ is simple if it is minimized by a simple closed curve.
\end{definition}

One natural question is a counting one: how many are there simple homology classes on a given surface? In some sense, this is orthogonal to the classical curve counting problem, namely: given a fixed topological type (or a homology class), how many are there curves of this topological type of length less than a positive constant $x$ ? The question was to count short curves representing the same homology class, and now we would like to count the homology classes admitting a short representative. \newline 
More precisely, given a surface $S$ and a positive number $x$, we are interested in the following question: how many are there simple homology classes on $S$ whose stable norm does not exceed $x$?
It is known (see \cite{Gut-Mas}) that on translation surfaces this quantity grows quadratically in $x$. Since half-translation surfaces are so close to translation surfaces, one would expect the number of simple homology classes to be quadratic in $x$ as well on half-translation surfaces. But this is not true, and the aim of this section is to provide a counter-example. \newline \newline

Let $X$ be a square torus with a vertical slit of length $\rho$.
The surface we are interested in is the genus $2$ half-translation surface $\Sigma$ from the previous section, obtained by glueing two copies of $X$ along a flat cylinder. By Theorem \ref{glue}, since the unit ball of the stable norm of $\Sigma$ is the convex hull of two copies of the unit ball of the stable norm of $X$, counting the number of simple classes on $\Sigma$ is equivalent to counting twice the number of simple classes on $X$. Thus we will count these classes on $X$ as it is the building block of $\Sigma$. 

\begin{thm}
    Let $p(x)= \#  \{ h \in H_1(X,\ZZ),$ with $h$ simple and $\St{h}\le x \}$. Then
    $$p(x) \sim 4 \left(\sum_{b=1}^{\lfloor 1/\rho \rfloor} \frac{\varphi(b)}{b}\right )  x \ln x + O(x)$$
    where $\varphi$ is Euler's totient function and $\lfloor . \rfloor$ denotes the integral part.
\end{thm}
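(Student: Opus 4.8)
The plan is to reduce the count $p(x)$ to a count of lattice points (primitive integral homology classes) in a suitable region of $\RR^2$, using the explicit formulas for $\St{\cdot}$ from Theorem \ref{main thm}. By that theorem, a class $h=(m,n)$ is simple (i.e. has a simple minimizing curve, equivalently $\Bb$ is strictly convex in direction $h$) in exactly two situations: either $h$ is \emph{visible} ($|m|\le 1/\rho$), in which case $\St{h}=\sqrt{m^2+n^2}$; or $h$ is not visible but has a visible Farey parent $a/b$ or $c/d$ (i.e. $|b|\le 1/\rho$ or $|d|\le 1/\rho$), in which case $\St{h}=\sqrt{b^2+(a+\rho)^2}+\sqrt{d^2+(c-\rho)^2}$. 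So first I would split $p(x)=p_{\mathrm{vis}}(x)+p_{\mathrm{nonvis}}(x)$ accordingly, and by the symmetry $(m,n)\mapsto(\pm m,\pm n)$ reduce everything to the first quadrant $m,n\ge 0$, which is where the factor $4$ ultimately comes from.

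The visible contribution is easy: $p_{\mathrm{vis}}(x)$ counts primitive $(m,n)$ with $|m|\le 1/\rho$ and $\sqrt{m^2+n^2}\le x$. For each fixed $m$ with $1\le |m|\le \lfloor 1/\rho\rfloor$ (plus the degenerate $m=0$ giving $O(1)$ classes), the number of $n$ with $\gcd(m,n)=1$ and $|n|\le\sqrt{x^2-m^2}$ is $2\sqrt{x^2-m^2}\cdot\varphi(m)/m + O(\sqrt{\log})$-type error, i.e. $=\frac{2\varphi(m)}{m}x+O(1)$ as $x\to\infty$; summing over $\pm m$ gives $p_{\mathrm{vis}}(x)=\big(4\sum_{b=1}^{\lfloor 1/\rho\rfloor}\varphi(b)/b\big)x+O(1)$, which is $O(x)$ and hence absorbed into the error term. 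So the $x\ln x$ main term must come entirely from $p_{\mathrm{nonvis}}(x)$.

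For the non-visible simple classes, the key structural fact is that they are organized into \emph{Stern–Brocot branches hanging off visible classes}. Fix a visible class, say with slope $a/b$, $1\le b\le\lfloor 1/\rho\rfloor$; by Proposition \ref{1 parent}/\ref{2 parent} a non-visible simple class $(m,n)$ whose visible Farey parent is $(b,a)$ has the form $(m,n)=(b,a)\oplus(\text{something})$, and iterating the mediant operation on the $(b,a)$ side one gets exactly the classes $(b+kb', a+ka')$... — more precisely, the simple non-visible classes attached to the visible neighbour $(b,a)$, on one side, are $(kb+\beta, ka+\alpha)$ for $k\ge 1$ where $(\beta,\alpha)$ is the adjacent visible class on that side (this is the parametrization used in the proof of Theorem \ref{main thm} for the accumulation $u_k,v_k$). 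For such a class the stable norm is $\St{(kb+\beta,ka+\alpha)}=\sqrt{b^2+(a\pm\rho)^2}+\sqrt{((k-1)b+\beta)^2+((k-1)a+\alpha\mp\rho)^2}$, which is asymptotically $k\sqrt{b^2+a^2}+O(1)=k\,\|(b,a)\|_2\cdot(1+o(1))$. Hence the number of such classes with $\St{\cdot}\le x$, for this fixed $(b,a)$ and this fixed side, is $\sim x/\sqrt{a^2+b^2}$. Then I must sum over all pairs $(b,a)$ with $\gcd(a,b)=1$, $1\le b\le \lfloor 1/\rho\rfloor$, $a\in\ZZ$ — but here is where $\ln x$ appears: for fixed $b$, the values of $a$ range (up to the constraint that $(b,a)$ really is a visible class with the stated neighbours) over an interval of length $\asymp x$, and $\sum_{a:\gcd(a,b)=1,\ |a|\lesssim x}\frac{x}{\sqrt{a^2+b^2}}\sim \frac{\varphi(b)}{b}\cdot x\cdot 2\ln x$. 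Summing over $b$ and over the two sides and the four sign choices gives the claimed constant $4\sum_{b=1}^{\lfloor 1/\rho\rfloor}\varphi(b)/b$ in front of $x\ln x$. (The doubling $\Sigma$ vs $X$ turns the $4$ into the $8$ of Theorem \ref{comptage}.)

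**Main obstacle.** The delicate point is \emph{avoiding double counting and correctly cutting off the $a$-range}: each non-visible simple class has a visible Farey \emph{pair} potentially, but Theorem \ref{main thm} requires only \emph{one} parent to be visible, so a class could be attached to two distinct visible classes, or a "visible class $(b,a)$" might itself only be reachable from certain slopes. I would handle this by working directly in the Stern–Brocot tree: every rational $n/m$ has a unique path from a visible ancestor, and I would partition the non-visible simple slopes by their \emph{first non-visible vertex} along the Stern–Brocot path and show each is counted once. The second subtlety is controlling the error: the $O(1)$ (in $k$) corrections inside each $\St{(kb+\beta,ka+\alpha)}$ accumulate, and one must check that $\sum_{b,a}$ of the resulting $O(1)$-per-branch errors is $O(x)$, not $O(x\ln x)$ — this needs the number of relevant $(b,a)$ to be $O(x)$ and each branch-error to be $O(1)$, which is plausible but is the computation requiring the most care. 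Everything else (the visible count, the per-branch asymptotics, the totient sums via $\sum_{a\le N,\gcd(a,b)=1}1 = \frac{\varphi(b)}{b}N+O(2^{\omega(b)})$ and partial summation against $1/\sqrt{a^2+b^2}$) is routine.
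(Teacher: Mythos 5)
Your strategy is correct and rests on exactly the same combinatorial skeleton as the paper's proof: the simple classes are the visible ones (contributing only $O(x)$) together with the Farey children of visible classes, organized into branches $(kb+\beta,ka+\alpha)$, $k\ge 1$, with $\St{(kb+\beta,ka+\alpha)}=k\|(b,a)\|_2+O(1/k)$; the $x\ln x$ then comes from the double sum over $k$ and over the $\asymp x$ admissible values of $a$ for each fixed $b\le\lfloor 1/\rho\rfloor$. Where you genuinely diverge is in the analytic finish. The paper encodes the length spectrum in a generalized Dirichlet series $F(s)=\sum_n l_n^{-s}$, shows $F(s)=4\bigl(\sum_{b\le\lfloor 1/\rho\rfloor}\varphi(b)/b\bigr)\zeta(s)^2+C\zeta(s)+R(s)$ (the two nested $\zeta$ factors coming precisely from your two summations over $k$ and over $a$), and extracts the asymptotics via Perron's formula and the classical estimate $\sum_{n\le x}\tau(n)\sim x\ln x+O(x)$; the excess from classes counted off two visible parents is packaged into a single correction term $E(s)$ dominated by $\zeta(s)$, hence absorbed into $O(x)$. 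You instead perform the equivalent elementary hyperbola-method count directly, with partial summation against $1/\sqrt{a^2+b^2}$. Your route is more self-contained and makes the source of each factor visible, at the cost of having to track the error terms (the $O(1)$ per branch, summed over $O(x)$ branches, and the cutoff of the $a$-range) by hand — which you correctly identify as the delicate points, and which are manageable exactly as you describe; the paper's route defers all of this bookkeeping to the analytic behaviour of $\zeta$ near $s=1$. Your proposed resolution of the double-counting issue (partitioning non-visible simple slopes by the first non-visible vertex on the Stern--Brocot path from a visible ancestor) is in fact cleaner than the paper's somewhat informal bound on $E(s)$, though both yield only an $O(x)$ contribution. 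One small caution: the bookkeeping of the factors of $2$ (signs of the class, the two sides of each visible class, and the shared first child of two adjacent visible classes) is exactly where an error of a factor of $2$ in the leading constant is easiest to make, so that accounting deserves to be written out explicitly rather than asserted.
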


We use classical counting methods from analytic number theory, see for instance \cite{Apostol} or \cite{Tenenbaum}.

\begin{proof}
Let $L :=  \{ \St{h}$ with $h\in H_1(X,\ZZ)$ simple$ \}$ be the length spectrum of $X$, where lengths are repeated with multiplicity. This is a countable set of positive numbers, and we order it by increasing order: $L = \left \{ l_1 \le l_2 \le ... \le l_n \le ... \right \}$. For any complex number $s$ with $\Re(s) > 1$, let 
$$F(s):= \sum_{n=1}^\infty \frac{1}{l_n^s}$$
This is a generalized Dirichlet series of the form $F(s) = \sum\limits_{n=1}^\infty a_n e^{-\lambda_n s}$ where for all $ n, a_n = 1$ and $\lambda_n = \ln l_n$. Applying Perron formula,  for any $\sigma >1$ and $y\in ]\lambda_n, \lambda_{n+1}[$ we get

$$ \sum_{k=1}^n a_k = \frac{1}{2i\pi}\int_{\sigma-i\infty}^{\sigma+i\infty} F(s) \frac{e^{sy}}{s}ds$$

The left hand term is the number of $\lambda_k$ such that $\lambda_k \le y$, or equivalently this is the number of $l_k$ such that $l_k\le e^y$. By setting $x:= e^y$, we get

$$p(x) = \frac{1}{2i\pi}\int_{\sigma-i\infty}^{\sigma+i\infty} F(s) \frac{x^s}{s}ds$$

so all we have to do is to compute the right hand side integral. To do this, we need to find a more practical expression of $F(s)$. \newline \newline

How are simple classes distributed in $H_1(X,\ZZ)$? These classes correspond to the directions of the vertices of the unit ball of the stable norm of $X$, i.e they are either visible classes or the Farey child of some visible class. Each visible class has infinitely many Farey children, that are all simple classes. Thus we can rearrange the sum in $F(s)$:

$$ F(s) = \sum_{ (b,a) \text{ visible }} F_{b,a}(s)   - E(s)$$
where $$F_{b,a}(s) = \frac{1}{\St{(b,a)}^s} + \sum\limits_{(q,p) \text{ child of } (b,a)} \frac{1}{\St{(q,p)}^s}$$ and $E(s)$ is an excess correcting term, as we may count some classes multiple  times. \newline \newline

Let us find a more precise expression for $F_{b,a}(s)$. The successive Farey children of $(b,a)$ are of the form $(kb+\beta, ka+\alpha)$ and $(kb+\delta, ka+\gamma)$ where $(\beta,\alpha)$ and $(\delta,\gamma)$ are two other visible classes such that $\alpha /\beta , a/b$ and $\gamma/\delta$ are consecutive neighbours in some Farey sequence, and $k\in\NN$. 
Since $(b,a)$ is visible, we have
$$\St{(b,a)} = \|(b,a)\|_2$$
For $k$ big enough, $(kb+\beta, ka+\alpha)$ is not a visible class so by Theorem \ref{main thm}
$$\St{(kb+\beta, ka+\alpha)} = \sqrt{((k-1)b + \beta)^2 + ((k-1)a+\alpha \pm \rho)^2} + \sqrt{b^2+(a\mp \rho)^2}$$
Taking the Taylor series expansion of this expression, we get
$$\St{(kb+\beta, ka+\alpha)} = k \| (b,a)  \|_2 + \frac{c}{k} + O\left (\frac{1}{k^2}\right )$$
where $c$ is a constant independent of $k$. Thus,

$$\frac{1}{\St{(kb+\beta, ka+\alpha)}^s} = \frac{1}{k^s \|(b,a)\|_2^s} - \frac{sc}{k^s} + r_{1,k}(s)$$

where $|r_{1,k}(s) / k^{s+2}|\longrightarrow 0$ when $k$ goes to infinity.

Similarly, we have
$$\frac{1}{\St{(kb+\delta, ka+\gamma)}^s} = \frac{1}{k^s \|(b,a)\|_2^s} + \frac{sc}{k^s} + r_{2,k}(s)$$
where $|r_{2,k}(s) / k^{s+2}|\longrightarrow 0$ when $k$ goes to infinity.

Injecting it back into $F_{b,a}(s)$, we obtain

$$F_{b,a}(s) = \frac{1}{\|{(b,a)}\|_2^s} + 2\sum_{k=1}^\infty \frac{1}{k^s \| (b,a) \|_2 ^s} + \sum_{k=1}^\infty (r_{1,k}(s) + r_{2,k}(s))$$

so

$$ F_{b,a}(s) = \frac{1}{\|(b,a)\|_2^s } \left( 2\zeta(s) +1\right )  + R_{b,a}(s)$$
where $$\zeta(s) = \sum_{k=1}^\infty \frac{1}{k^s}$$ is the Riemann zeta function, and $|R_{b,a}(s)/\zeta(s+2)|$ is bounded for $\Re(s)>1$. \newline \newline

Summing over all visible classes, we have
$$\sum_{(b,a) \text{ visible }} F_{b,a}(s) =  \left( 2\zeta(s) +1\right ) \sum_{(b,a) \text{ visible }}  \frac{1}{\|(b,a)\|_2^s }  +   \sum_{(b,a) \text{ visible }} R_{b,a}(s)$$
Since the terms of the Farey sequence between two integers $n$ and $n+1$ are simply the terms of the Farey sequence between $0$ and $1$ translated by $n$, we can rewrite the first right hand side sum as follows:

$$ \sum_{(b,a) \text{ visible}}  \frac{1}{\|(b,a)\|_2^s } = \sum_{\substack{(b,a) \text{ visible } \\ 0<a/b<1}} \sum_{k\in \ZZ}  \frac{1}{\|(b,a+kb)\|_2^s } = 2 \sum_{\substack{(b,a) \text{ visible } \\ 0<a/b<1}} \sum_{k\ge 0}  \frac{1}{\|(b,a+kb)\|_2^s }$$

When $k$ goes to infinity, we have 

$$\frac{1}{\|(b,a+kb)\|_2^s } = \frac{1}{bk^s} + O\left ( \frac{1}{k^{s+2}} \right )$$

so 

$$ \sum_{(b,a) \text{ visible }}  \frac{1}{\|(b,a)\|_2^s } = 2(2\zeta(s) +1) \left (\sum_{\substack{(b,a) \text{ visible } \\ 0<a/b<1}} \frac{\zeta(s)}{b} \right ) + R_1(s)$$

where $|R_1(s)/\zeta(s+2)|$ is bounded for $\Re(s) >1$. Thus we have

$$ \sum_{(b,a) \text{ visible }} F_{b,a}(s) = 4 \left ( \sum_{\substack{ b \le \lfloor 1/\rho \rfloor \\ gcd(b,a)=1}} \frac{1}{b} \right ) \zeta^2(s) + c' \zeta(s) + R_2(s) $$
where $c'$ is a constant and $|R_2(s) / \zeta^2(s+2)|$ is bounded for $\Re(s)>1$.

Finally, we have 

$$ \sum_{(b,a) \text{ visible}} F_{b,a}(s) = 4 \left ( \sum_{b=1}^{\lfloor 1/\rho \rfloor}\frac{\varphi(b)}{b} \right ) \zeta^2(s) + c' \zeta(s) + R_2(s) $$ 
where $\varphi$ is Euler's totient function. \newline

Did we count classes multiple times? Yes we did: any visible class $(m,n)$ has been counted as many times as it has Farey ancestors. Indeed, it has been counted as a Farey child of each of its ancestors, and since the first coordinate of the Farey parents are strictly less than the first coordinate of this class this means $(m,n)$ has been counted at most $m$ times. Now, the first coordinate of visible classes is bounded above by $\lfloor 1/\rho \rfloor$, so any visible class that we counted multiple times has been counted at most $\lfloor 1/\rho \rfloor$ times. Moreover, the first Farey child of two visible classes has been counted twice, once for each of its visible parents. 
Thus, even if we can't give a precise expression for $E(s)$, we deduce that $E(s)$ is dominated by 
$$\lfloor 1/\rho \rfloor \sum\limits_{(q,p) \text{ with both visible parents}} \frac{1}{\| (q,p) \|_X^s}$$
This in turn is dominated by 
$$ c'' \sum\limits_{(b,a) \text{visible}} \frac{1}{\| (b,a) \|_X^s} $$
where $c''$ is a positive constant. As we have seen earlier, this quantity is equal to $c''\zeta(s)$ plus a remainder that is small when $\Re(s)\longrightarrow 1$.
\newline \newline

Putting all the formulas together, we get

$$F(s) = 4 \left ( \sum_{b=1}^{\lfloor 1/\rho \rfloor}\frac{\varphi(b)}{b} \right ) \zeta^2(s) + C \zeta(s) + R(s)$$

where $C$ is a non-zero constant and $|R(s) / \zeta(s) |$ bounded for $\Re(s)> 1$. Recall that by Perron formula for any $\sigma >1$ we have

$$p(x) = \frac{1}{2i\pi}\int_{\sigma-i\infty}^{\sigma+i\infty} F(s) \frac{x^s}{s}ds$$

Replacing $F(s)$ in the integral by the above expression, we can compute the terms separately.

 \begin{itemize}\renewcommand{\labelitemi}{$\bullet$}
\item By Perron formula, 
$$ \frac{1}{2i\pi}\int_{\sigma-i\infty}^{\sigma+i\infty} \zeta^2(s) \frac{x^s}{s}ds = \sum_{n\le x} \tau(n) $$
where $\tau(n)=\#  \{ d$ such that $d$ divides $n \}$. This is due to a classical property of Riemann zeta function: the Dirichlet series expansion of $\zeta^2(s)$ is known to be $\sum\limits_{n\ge 1} \tau(n) n^{-s}$. Another classical number theory computation yields
$$\sum_{n\le x} \tau(n) \sim x \ln x + O(x) $$

\item Again, by Perron formula, since $\zeta(s) = \sum\limits_{n\ge 1} n^{-s}$ we have 
$$ \frac{1}{2i\pi}\int_{\sigma-i\infty}^{\sigma+i\infty} \zeta(s) \frac{x^s}{s}ds = \sum_{n\le x} 1 = O(x) $$

\item We cannot compute explicitly the term containing the remainder $R(s)$. However, since $R(s)$ is small compared to $\zeta(s)$ when $\Re(s)$ goes to $1$, and since the Perron formula holds for \emph{any} $\sigma > 1$, by making $\sigma$ go to $1$ we are assured that the integral term containing $R(s)$ is small compared to the other integral terms. In particular, this integral term is at most linear in $x$, so for $\sigma$ close enough to $1$
$$ \frac{1}{2i\pi}\int_{\sigma-i\infty}^{\sigma+i\infty} R(s) \frac{x^s}{s}ds = O(x) $$

 \end{itemize}

Summing these three terms with the right multiplicative constants we finally get

$$p(x) \sim 4 \left(\sum_{b=1}^{\lfloor 1/\rho \rfloor} \frac{\varphi(b)}{b}\right )  x \ln x + O(x)$$

\end{proof}

\begin{cor}
    The number of simple homology classes on the half-translation surface $\Sigma$ whose stable norm does not exceed $x$ is asymptotic to $$ 8  \left(\sum_{b=1}^{\lfloor 1/\rho \rfloor} \frac{\varphi(b)}{b}\right )  x \ln x + O(x)$$
    More generally, by glueing $n$ copies of the slit torus $X$ along flat cylinders, one can construct a genus $n$ half-translation surface whose number of simple homology classes of stable norm less than $x$ grows as
    $$ 4n  \left(\sum_{b=1}^{\lfloor 1/\rho \rfloor} \frac{\varphi(b)}{b}\right )  x \ln x + O(x)$$
\end{cor}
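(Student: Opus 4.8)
The plan is to deduce both statements from the preceding theorem computing $p(x)$ on $X$ together with the structural description of the unit ball of $\Sigma$ given by Theorem \ref{glue}. First I would identify the simple homology classes of $\Sigma$. On the one hand, if $(m,n)$ is a simple class of $X_1$, then by Lemma \ref{restriction} a minimizing curve for $(m,n,0,0)$ on $\Sigma$ can be taken inside $X_1$, and if $(m,n)$ is simple in $X_1$ this curve is simple in $\Sigma$; since $\StS{(m,n,0,0)}=\|(m,n)\|_{X_1}$, the class $(m,n,0,0)$ is simple on $\Sigma$, and symmetrically for $X_2$. On the other hand, by the lemma of Section $2$ a homology class of a surface is simple if and only if the unit ball of the stable norm is strictly convex in its direction, i.e. the corresponding point of the unit sphere is an extreme point. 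By the discussion preceding Theorem \ref{glue}, the only extreme points of the unit ball of $\Sigma$ lie in the two coordinate planes $\mathrm{Span}(e_1,f_1)$ and $\mathrm{Span}(e_2,f_2)$ and correspond exactly to the vertices of the unit balls of $X_1$ and $X_2$. Hence the simple classes of $\Sigma$ are precisely the $(m,n,0,0)$ with $(m,n)$ simple on $X_1$ together with the $(0,0,p,q)$ with $(p,q)$ simple on $X_2$.

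Since $(m,n,0,0)\neq(0,0,p,q)$ whenever $(m,n)\neq(0,0)$ and $(p,q)\neq(0,0)$, these two families are disjoint, so
$$\#\{h\in H_1(\Sigma,\ZZ):\ h\ \text{simple},\ \StS{h}\le x\}=\#\{(m,n)\ \text{simple on}\ X_1,\ \|(m,n)\|_{X_1}\le x\}+\#\{(p,q)\ \text{simple on}\ X_2,\ \|(p,q)\|_{X_2}\le x\}.$$
As $X_1$ and $X_2$ are isometric copies of $X$, each summand equals $p(x)$, so the total is $2p(x)$, and the preceding theorem gives the asymptotic $8\bigl(\sum_{b=1}^{\lfloor 1/\rho\rfloor}\varphi(b)/b\bigr)\,x\ln x+O(x)$.

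For the genus-$n$ statement I would first record the genus-$n$ analogue of Theorem \ref{glue}: gluing $n$ copies $X_1,\dots,X_n$ of $X$ along flat cylinders of width $w>\rho$ yields a genus $n$ half-translation surface whose unit ball of the stable norm is the convex hull of the $n$ copies of the unit ball of $X$ sitting in the $n$ coordinate $2$-planes of $H_1\cong\ZZ^{2n}$. The proof is the one sketched for $n=2$: a Lemma \ref{restriction}-type argument shows that a minimizing curve for a class supported in one coordinate plane stays inside the corresponding slit torus, and the cylinder-crossing estimate shows that any curve running through one of the cylinders wastes at least $2(w-\rho)>0$ in length, so minimizing multicurves decompose as disjoint unions across the $X_i$, whence $\StS{\cdot}=\sum_i\|\cdot\|_{X_i}$ and the extreme points of the unit ball are again exactly the vertices of the individual $X_i$. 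Consequently the simple classes of the genus $n$ surface are the disjoint union over $i$ of the simple classes of the $X_i$, and their counting function is $n\,p(x)\sim 4n\bigl(\sum_{b=1}^{\lfloor 1/\rho\rfloor}\varphi(b)/b\bigr)\,x\ln x+O(x)$.

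The only non-routine point is the identification of the simple classes of the glued surface with the union of the simple classes of the building blocks: it relies on the ``flat $\iff$ non-simple minimizer'' lemma and on the observation, already made before Theorem \ref{glue}, that the only candidate extreme points of the glued surface's unit ball lie in the coordinate homology planes. Once this is in place, the disjointness of the families and the passage from $p(x)$ to $2p(x)$ (resp. $np(x)$) are immediate, and all the analytic asymptotics are supplied by the preceding theorem.
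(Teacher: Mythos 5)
Your proposal is correct and follows essentially the same route as the paper, which deduces the corollary by observing (just before its counting theorem) that the simple classes of $\Sigma$ are exactly the simple classes of $X_1$ and $X_2$ sitting in the two coordinate homology planes, so the count is $2p(x)$ (resp.\ $np(x)$ for $n$ glued tori); you merely make explicit the identification the paper leaves implicit. One small caution: the blanket claim ``simple iff strictly convex'' is slightly stronger than the Section~2 lemma gives in general (a flat direction could a priori still admit a simple minimizer), but on these surfaces the equivalence does hold, since flat directions of $X$ have no simple geodesic representative by Proposition~\ref{no simple} and mixed classes $(m,n,p,q)$ are minimized only by disconnected multicurves.
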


{\bf Remark.} The previous Corollary is a counterexample to the statement on singular surfaces of Proposition 3 in \cite{Gut-Mas}. Indeed, this proposition provides a quadratic lower bound on the number of simple homology classes on surfaces with conical singularities, such as half-translation surfaces. The quadratic lower bound is obtained first on non-singular surfaces and extended to singular surfaces by taking a limit on the metric: but this does not work, as the geodesics do not transform continuously when deforming the metric. More precisely, the argument that fails here is the bound on the number of connected components of minimizing curves: on a non-singular surface this number is bounded above by the genus of the surface, but on our genus $2$ surface $\Sigma$ a minimizing curve can have up to $4$ connected components. \\ \\

 To conclude we would like to comment on the difference between translation surfaces and half-translation surfaces with respect to the stable norm. Formally, a translation surface is a pair $(S,\omega)$ where $S$ is a Riemann surface and $\omega$ is a holomorphic $1$-form on $S$. The surface is equipped with the flat metric $| \omega |$, with conical singularities located at the zeroes of $\omega$. It is known (due to Masur) that the number of embedded flat cylinders of length $x$ in $S$ grows quadratically as $x$ goes to infinity. Another way to define such a cylinder is as a maximal set of parallel closed geodesics of length less than $x$. If $\gamma$ is a geodesic in a cylinder, it represents a non-trivial homology class. Indeed, since the length of $\gamma$ is 
 $$l(\gamma) = \int_\gamma |\omega| > 0$$
 in particular the pairing $\langle [\gamma], \omega \rangle = \mathlarger{\int}_\gamma \omega$ is non-zero, so $[\gamma]$ is non-zero in $H_1(X,\RR)$. One can show that geodesic in a cylinder are minimizing in their homology class: thus, when counting simple homology classes one can obtain a (quadratic) lower bound by counting cylinders, and we get a quadratic estimate for simple classes on translation surfaces. \\The situation is different on half-translation surfaces. Formally, a half-translation surface is a pair $(S,q)$ where $q$ is a holomorphic quadratic differential, i.e a section of the symmetric square of the holomorphic cotangent bundle of $S$. Again, the flat metric on $S$ is given by $|q|$, and the conical singularities correspond to the zeroes of $q$. Again, one can count cylinders of parallel closed geodesics on $S$ of length less than $x$ and show that this number grows quadratically as $x$ goes to infinity. However, this time if $\gamma$ is closed geodesic in a cylinder it may happen that it represents the trivial homology class. Indeed, since $q$ is not a differential form, the formula $l(\gamma) = \mathlarger{\int}_\gamma |q| > 0$ does not say anything about the pairing $ \eta \in H^1(S,\RR) \mapsto \langle [\gamma], \eta \rangle$ and the homology class $[\gamma]$. For instance, on our surface $\Sigma$ any closed geodesic looping around the central cylinder has trivial homology. Hence on half-translation surfaces there are fewer simple homology classes than there are cylinders of parallel geodesics. 
 Thus, because of this difference between translation surfaces and half-translation surfaces, it makes sense to find a sub-quadratic estimate on the number of simple closed geodesics on a half-translation surface such as $\Sigma$.

\bibliographystyle{alpha}

\end{document}